\DeclareMathOperator{\Exp}{Exp}
\DeclareMathOperator{\inter}{int}
\def\bd{\mathbf{d}}
\def\be{\mathbf{e}}
\def\bv{\mathbf{v}}
\def\ba{\mathbf{a}}
\def\bb{\mathbf{b}}
\def\bc{\mathbf{c}}
\def\bw{\mathbf{w}}
\newcommand{\Spec}{\operatorname{Spec}}
\newcommand{\cF}{\mathcal{F}}
\def\fm{\mathfrak{m}}
\def\fp{\mathfrak{p}}
\def\NN{\mathbb{N}}
\def\ZZ{\mathbb{Z}}
\def\QQ{\mathbb{Q}}
\def\CC{\mathbb{C}}
\def\RR{\mathbb{R}}
\def\cD{\mathcal{D}}
\def\bu{\mathbf{u}}
\theoremstyle{Theorem}
\newtheorem{thm}{Theorem}[section]
\newtheorem*{mainthm}{Theorem}
\newtheorem*{maincor}{Corollary}
\newtheorem{cor}[thm]{Corollary}
\newtheorem{lem}[thm]{Lemma}
\theoremstyle{definition}
\newtheorem{dff}[thm]{Definition}
\newtheorem{xmp}[thm]{Example}
\newtheorem{rmk}[thm]{Remark}
\def\cone{\operatorname{cone}}
\def\Newt{\operatorname{Newt}}
\colorlet{DG}{green!50!black}
\colorlet{DB}{red!50!black}
\def\LEM[#1]{\footnote{ {\color{DG} LEM: #1  }  }}
\def\WDT[#1]{\footnote{ {\color{purple} WDT: #1  }  }}
\def\JV[#1]{\footnote{ {\color{DB} JV: #1  }  }}
\def\b0{\mathbf{0}}
\def\mon{\textnormal{mon}}
\def\int{\operatorname{int}}
\newcommand{\val}[1]{\nu(#1)}
\newcommand{\pval}[1]{\nu'(#1)}
\newcommand{\vpt}[1]{\omega(#1)}
\numberwithin{equation}{section}
\title{Differentially fixed ideals in toric varieties}
\author{ Lance Edward Miller, William D. Taylor, Janet Vassilev}
\begin{document}

\maketitle

\begin{abstract}
This article concerns monomial ideals fixed by differential operators of affine semi-group rings over $\mathbb{C}$. We give a complete characterization of when this happens. Perhaps surprisingly, every monomial ideal is fixed by an infinite set of homogeneous differential operators and is in fact determined by them. This opens up a new tool for studying monomial ideals. We explore applications of this to (mixed) multiplier ideals and other variants as well as give examples of detecting ideal membership in integrally closed powers and symbolic powers of squarefree monomial ideals.  
\end{abstract}

\section{Introduction}

In this article, we give a complete answer to the following question. What proper monomial ideals in a complex normal affine semigroup ring are fixed by a fixed homogeneous differential operator? That is to say, when does $\delta(I) = I$ for $\delta$ a homogeneous differential operator and $I$ a monomial ideal. This is inspired by a rough, but important, analogy between $p^{-e}$-linear maps which can be thought of as potential Frobenius splittings for rings of positive characteristic $p > 0$ and differential operators which of course can be considered in any characteristic, see for example \cite{BJNB19}. We note the main theorem of \cite{HSZ14} characterizes all ideals fixed by a fixed $p^{-e}$-linear map in the toric setting. In that case, there are finitely such fixed ideals \cite{BB09}. In contrast, we show every proper monomial ideal $I$ has infinitely many homogeneous differential operators fixing it. Moreover, the ideal $I$ is completely determined by the differential operators fixing it. Our results are also distinct from but in similar spirit to \cite{Ciu20} which where similar questions were addressed concerning ideals invariant under a fixed derivation.

\

Our approach is based on the Saito-Travis description of differential operators for such semigroup rings, \cite{ST01} and the structures subsequently used in \cite{BCK+,BCK+21}. This gives an explicit description of homogeneous differential operators $\delta$ of multidegree $\bd \in \ZZ^d$ on a semigroup ring $S$ defined by a cone of dimension $d$. Specifically, for each $\bd$, a homogeneous differential operator $\delta$ of degree $\bd$ has the form $\delta = x^{\bd} f$ where $x^{\bd}$ is a monomial in $d$ variables and $f$ is a polynomial differential operator of a prescribed form. 

\

Before stating the main theorem, we establish a bit of notation. For such an operator, a key role in the characterization of fixed ideals is played by the monomial vanishing locus. Specifically, for $\delta = x^{\bd} f$, as $f$ is a polynomial differential operator one can look at the vanishing locus of the polynomial defining it. The set of lattice points on which $f$ vanishes is the set  $V_{\mon}(f) = \{ \ba \in \ZZ^d \colon f(\ba) = \b0\}$. The geometric information in $V_{\mon}(f)$ that plays an essential role are the points in $V_{\mon}(f)$ which are furthest along rays in the direction of $-\bd$. To obtain these points, set $\val\ba := \inf\{t\in \RR \colon \ba + t\bd\in V_{\mon}(f)\}$. For $\delta$ to fix any ideals, we show it is necessary that $\val \ba >- \infty$ for all $\ba\in S$. When this condition is satisfied, we pick out the furthest points along these rays via the quantity $\pval\ba  :=  \max\{t\in [\val\ba,\val\ba + 1) \colon \ba + t\bd\in V_\mon(f)\}$ and consider the subset $V_{\mon}'(f)  := \{\ba + \pval\ba \bd \colon \ba\in \ZZ^d, \val\ba > -\infty\} \subset V_{\mon}(f).$ Utilizing this, we give a complete description of all ideals in $R$ are $\delta$-fixed for some homogeneous $\delta$, and what ideals are fixed for a given $\delta$ in terms of the exponent set $\Exp I = \{ \ba \colon x^{\ba} \in I\}$. We state these characterization as follows. 

\begin{mainthm}[cf. Theorem~\ref{thm:everyidealisdeltafixed} and Corollary~\ref{cor:interiordeltafix}]
Let  $R=\CC[S]$ be an affine semigroup ring defined by a cone $\sigma^\vee$, $I\subseteq R$ a proper monomial ideal, and $\bd\in\ZZ^d$ such that $-\bd\in S$. Set $\be\in\ZZ^d$ primitive such that $\bd = q\be$ for some $q\in \NN$.

\begin{itemize}
    \item There exists a homogeneous differential operator $\delta$ of degree $\bd$ such that $I$ is $\delta$-fixed.
    \item If $-\bd\in\inter \sigma^\vee$, then $\delta$ fixes a monomial ideal $I$ if and only if 
\begin{enumerate}
    \item $\val\ba>-\infty$ for all $\ba\in S$, 
    \item  for all $\ba\in V_{\mon}'(f)$ and $i=0,\ldots, q-1$, $\ba - i\be \in V_\mon(f)$,  and 
    \item for every $\ba,\bb\in V_{\mon}'(f)$, $\ba-\bb\notin S-\be$.  
\end{enumerate} In this case, $\delta$ fixes only one ideal, namely the ideal $I$ with $\Exp I = \{\ba\in S\mid \pval\ba\geq 0\}$.
\end{itemize}
\end{mainthm}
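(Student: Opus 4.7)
My approach is to translate $\delta(I) = I$ via the Saito-Travis formula $\delta(x^{\ba}) = f(\ba)\, x^{\ba + \bd}$ into combinatorial conditions on $E = \Exp I$ and $V_{\mon}(f)$. The equation is equivalent to two conditions: (forward) $\ba \in E$ and $f(\ba) \neq 0$ imply $\ba + \bd \in E$; and (backward) for each $\ba^* \in E$ there exists $\ba \in E$ with $\ba + \bd = \ba^*$ and $f(\ba) \neq 0$. Since $-\bd \in S$ and $E + S = E$, one automatically has $\ba^* - \bd \in E$, so (backward) collapses to $f(\ba^* - \bd) \neq 0$ for all $\ba^* \in E$.

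For the existence bullet I would construct $f$ explicitly as a polynomial in the Euler operators. Such an $f$ must vanish on the set $Z := \{\ba \in S : \ba + \bd \notin S\} \cup \{\ba \in E : \ba + \bd \notin E\}$ (the Saito-Travis vanishing locus together with the forward-failure locus) and be nonzero on $E - \bd$. A direct check gives $Z \cap (E - \bd) = \emptyset$: for $\ba = \ba^* - \bd \in E - \bd$ one has $\ba + \bd = \ba^* \in E \subseteq S$, so neither defining condition of $Z$ applies. Because $\sigma^\vee$ is polyhedral and $I$ is finitely generated, $Z$ decomposes as a finite union of affine hyperplane slices parallel to the facets, and $f$ may be taken as a product of the corresponding affine-linear forms.

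For the characterization bullet I would perform a line-by-line analysis along the direction $\be$. On each line $L = \{P + k\be : k \in \ZZ\}$, set $V = V_{\mon}(f) \cap L$ and $E_L = E \cap L$, indexed by $k \in \ZZ$, and decompose into residue classes modulo $q$. Combining (forward), (backward), and monomial-ideal closure under $-\bd = -q\be$, one finds that each nonempty $E_L^{(j)}$ is exactly the downward ray $\{k \equiv j \pmod q : k \leq \min V^{(j)}\}$ (when $V^{(j)} \neq \emptyset$), which forces the uniqueness of $I$ given $\delta$. Conditions (1)--(3) are then interpreted as follows: (1) forces $\min V$ finite on every line through $S$, making $\pval \ba$ well-defined; (2) forces the $q$ consecutive lattice points starting at $v^* := \min V$ to all lie in $V$, which aligns the residue-class rays into the single downward ray $\{k \leq v^* + q - 1\}$ matching $\{\ba \in S \cap L : \pval \ba \geq 0\}$; and (3) is the cross-line compatibility statement — if $\ba, \bb \in V_{\mon}'(f)$ satisfied $\ba - \bb \in S - \be$, then $\ba + \be \in \bb + S \subseteq E$, contradicting that $\ba + \be$ sits one step above the top of $E$ on its own line. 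Sufficiency follows by reversing the line analysis with (1)--(3) in place and verifying $\delta(I_0) = I_0$.

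The main obstacle I anticipate is showing that condition (3) is precisely the combinatorial statement needed to make the candidate $\Exp I_0 = \{\ba \in S : \pval \ba \geq 0\}$ closed under $S$-translation. The single-line analysis produces $E_L$ cleanly, but stitching distinct lines together consistently under the full semigroup action requires exactly the right non-interference condition between tops on different lines; condition (3) must be tight enough to rule out the obstruction above while loose enough to allow the ideal to exist. The existence construction and the single-line necessity arguments are more routine once the combinatorial reformulation of $\delta(I) = I$ is in hand.
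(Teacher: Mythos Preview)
Your existence construction (first bullet) has a genuine gap. You assert that $Z = \{\ba \in S : \ba + \bd \notin S\} \cup \{\ba \in E : \ba + \bd \notin E\}$ decomposes as a finite union of affine hyperplane slices parallel to the facets, and take $f$ to be the product of the corresponding linear forms. This fails once $I$ has more than one generator: the set $\{\ba \in E : \ba + \bd \notin E\}$ is not such a union, and any product of facet-parallel linear forms whose vanishing locus covers it will also vanish somewhere on $E - \bd$, killing the backward condition. Take $R = \CC[x,y]$, $I = (x^2, y^2)$, $\bd = (-1,-1)$: here $E - \bd = \{a \geq 3\} \cup \{b \geq 3\}$ and the forward-failure locus contains $(2,2)$. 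The only facet-parallel hyperplanes through $(2,2)$ are $\{a = 2\}$ and $\{b = 2\}$, and these meet $E - \bd$ at $(2,3)$ and $(3,2)$ respectively; hence no product of such linear forms can vanish at $(2,2)$ yet be nonzero on all of $E - \bd$. The paper's device is to use a \emph{sum} rather than a product: with $I = (x^{\ba_1},\ldots,x^{\ba_m})$ one sets $f = \sum_i H_{\bd - \ba_i}$. Each summand is nonnegative on $S$ and strictly positive exactly on $\ba_i - \bd + S$ (Lemma~\ref{lem:nonzero}), so $f(\bb) > 0$ iff $\bb \in \bigcup_i (\ba_i - \bd + S) = E - \bd$, giving precisely the required vanishing behaviour while keeping $H_\bd \mid f$.

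Your approach to the characterization (second bullet) is essentially the paper's: both proceed by a ray-by-ray analysis in the direction $\be$, interpret condition~(1) as finiteness of $\nu$, condition~(2) as alignment of the $q$ residue classes on each ray, and condition~(3) as the cross-ray compatibility needed for the candidate exponent set to be $S$-closed. The paper carries this out in the more general setting where $-\bd$ may lie on a proper face of $\sigma^\vee$, introducing auxiliary sets $W_{\mathcal{B}}$ indexed by $\bd$-compatible tuples, and then specializes to the interior case where the only such tuple is $\mathbf{0}$ and $W_{\mathbf{0}} = S$.
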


It is perhaps surprising but this characterization shows that every monomial ideal $I$ is completely determined by the non-empty set of differential operators $D_I := \{ \delta \colon \delta(I) = I\}$. This opens up a new line of study for monomial ideals. 

\

As an application, we consider a flexible but familiar setting where ideals are defined as the intersection of special polyhedral regions $P$. These regions simple from the point of view of combinatorial geometry. Specifically, any polyhedral region which is {\it $\sigma^\vee$-closed} in that they are closed under additive translation by $\sigma^\vee$ defines naturally an ideal and the usual correspondence between monomial ideals and their exponent sets shows all ideals are defined by such regions. We quite broadly work out explicit differential operators fixing such an ideal from the {\it face data}, i.e., hypersurfaces $\{ g_\tau \}$ defining $P$ together with integers $M_\tau$ which detect membership in $P$. This allows us to freely compute differential operations of various ideal theoretic constructions from their associated combinatioral geometric operations on polyhedra, a technique which is well-worn and pervades work on monomial ideals. In its most general form, the primary application is the following theorem.

\begin{mainthm}(cf. Theorem~\ref{thm:polyhedra}) Let $P_1,\ldots, P_n$ be $\sigma^\vee$-closed polyhedra with face data $\{(g_\tau,M_\tau)\}_{\tau\in\mathcal{S}_i}$ for $i=1,\ldots, n$ and $\b0\neq \bd\in-S$.
\begin{enumerate}
    \item Let $I=\langle x^{\ba } \mid \ba \in \bigcup P_i \rangle$ and  $f=\sum_{i=1}^n\prod_{\tau\in\mathcal{S}_i}(g_\tau(\theta),\lceil M_\tau-g_\tau(\bd)\rceil -1)!$.  For any $h\in (H_\bd)\cap(f)$, $\delta = x^\bd h(\theta)$ fixes $I$, and $x^\ba\in I$ if and only if $\delta(x^{\ba-\bd})\neq 0$.
    \item Let $J=\langle x^{\ba} \mid \ba\in \bigcup\inter P_i \rangle$ and  $f=\sum_{i=1}^n\prod_{\tau\in\mathcal{S}_i}(g_\tau(\theta),\lfloor M_\tau-g_\tau(\bd)\rfloor)!$.  For any $h\in (H_\bd)\cap(f)$, $\delta = x^\bd h(\theta)$ fixes $J$, and $x^\ba\in J$ if and only if $\delta(x^{\ba-\bd})\neq 0$.
\end{enumerate} 
\end{mainthm}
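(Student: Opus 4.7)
My plan is to reduce both parts of the theorem to pointwise vanishing computations of $f$ on the lattice, using the standard action $\delta(x^\bc) = h(\bc)\,x^{\bc+\bd}$ of $\delta = x^\bd h(\theta)$ on monomials. In particular, $\delta(x^{\ba-\bd}) = h(\ba-\bd)\,x^\ba$, so the biconditional in (1) becomes the claim that $h(\ba-\bd) \neq 0$ iff $\ba \in \bigcup_i P_i = \Exp I$. The hypothesis $h \in (H_\bd)$ ensures $\delta$ is a genuine differential operator on $R$ via the Saito--Travis characterization invoked earlier, while $h \in (f)$ forces $h$ to inherit the vanishing of $f$.

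The heart of the argument is the computation of where $f$ vanishes. Fix $\ba \in S$. Since each $P_i$ is $\sigma^\vee$-closed, every face functional $g_\tau$ is non-negative on $\sigma^\vee$; together with $-\bd \in S$ this gives $g_\tau(\bd) \leq 0$ and $g_\tau(\ba - \bd) = g_\tau(\ba) - g_\tau(\bd) \geq 0$. Interpreting $(y,k)!$ as the falling product $y(y-1)\cdots(y-k)$, which vanishes precisely for integer $y \in \{0,\ldots,k\}$, the $\tau$-factor of $f(\ba-\bd)$ vanishes iff $g_\tau(\ba-\bd) \leq \lceil M_\tau - g_\tau(\bd)\rceil - 1$, equivalently (as $g_\tau(\ba-\bd) \in \ZZ$) iff $g_\tau(\ba) < M_\tau$. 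Hence $\prod_{\tau \in \mathcal{S}_i}(\,\cdot\,)$ is nonzero iff $\ba \in P_i$, and since every nonzero product is strictly positive there is no cancellation in the sum over $i$: we conclude $f(\ba-\bd) \neq 0$ iff $\ba \in \bigcup_i P_i$.

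With this vanishing computation, the inclusion $\delta(I) \subseteq I$ is immediate: if $\bc \in \Exp I$ and $\delta(x^\bc) \neq 0$, then $h(\bc) \neq 0$ forces $f(\bc) \neq 0$ and so $\bc + \bd \in \bigcup_i P_i$, while $h \in (H_\bd)$ forces $\bc + \bd \in S$, giving $x^{\bc+\bd} \in I$. For $\delta(I) \supseteq I$ and the ``only if'' direction of the biconditional, the key observation is that $\bigcup_i P_i$ is itself $\sigma^\vee$-closed, so $\ba \in \Exp I$ implies $\ba - \bd \in \Exp I$; one then shows that $h \in (H_\bd) \cap (f)$ does not vanish at such a point, so $x^\ba$ appears (up to a nonzero scalar) as $\delta(x^{\ba-\bd}) \in \delta(I)$. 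I expect the main obstacle to be exactly this last step: verifying that the additional vanishing imposed by $(H_\bd)$ beyond $(f)$ is confined to points $\bc$ with $\bc + \bd \notin S$, which by $\sigma^\vee$-closedness lie outside $\Exp I - \bd$. Part (2) proceeds by the same scheme, replacing $\lceil M_\tau - g_\tau(\bd)\rceil - 1$ with $\lfloor M_\tau - g_\tau(\bd)\rfloor$; this adds one factor to each falling factorial and translates the vanishing condition into the strict inequality $g_\tau(\ba) > M_\tau$, equivalently $g_\tau(\ba) \geq \lfloor M_\tau \rfloor + 1$, which defines $\inter P_i$.
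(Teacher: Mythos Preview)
Your argument is essentially the paper's: both establish $g_\tau(\ba)\geq 0$ for $\ba\in S$ from $\sigma^\vee$-closedness, use this to ensure each summand of $f$ is non-negative on $S$ (so the sum vanishes only when every summand does), and translate $f(\ba)\neq 0$ into $\ba+\bd\in\bigcup_i P_i$. The paper then quotes Theorem~\ref{thm:fixedcriteria}, while you verify the two inclusions $\delta(I)\subseteq I$ and $I\subseteq\delta(I)$ directly; the content is identical.

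You are right that the passage from $f$ to an arbitrary $h\in(H_\bd)\cap(f)$ is the one nontrivial point, and your handling of the $H_\bd$ factor is correct: Lemma~\ref{lem:nonzero} gives $H_\bd(\ba-\bd)>0$ for every $\ba\in S$, so those zeros never meet $\Exp I-\bd$. The paper's proof in fact glosses over this step entirely, applying Theorem~\ref{thm:fixedcriteria} to $V_{\mon}(f)$ rather than $V_{\mon}(h)$. Strictly speaking the quantifier ``for any $h$'' in the statement is too strong: in $R=\CC[x]$ with $I=(x)$, $\bd=-1$, and $h=\theta_x(\theta_x-1)(\theta_x-5)\in(H_\bd)\cap(f)$ one finds $x^4\notin\delta(I)$ and the biconditional fails at $\ba=4$. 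So what both your argument and the paper's actually establish is the case $h=H_\bd f$ (or any $h$ whose zero set on $S$ coincides with that of $H_\bd f$), which is precisely how the theorem is used in all of the paper's applications.
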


We draw out explicit consequences of this as many important constructions in algebraic geometry and commutative algebra considered on monomial ideals come down to essentially lattice point membership in such a polyhedral region. Examples of this include the Blickle-Howald description of multiplier ideals. In particular, we characterize all jumping numbers using the differential operators fixing an a multiplier ideal.  

\begin{maincor}(cf. Corollary~\ref{cor:lct}) Let $c\geq 0$ a real number and $I$ a monomial ideal of an affine semigroup ring $R$. Set $\bw = (1,1,\ldots,1)$, $X = \Spec R$, and $J(X,I^c) =\langle x^\ba \mid \ba + \bw \in c \cdot \inter \Newt I\rangle$ the multiplier ideal of the pair $(X,I^c)$. Set $(g_\tau,M_\tau)_{\tau \in \mathcal{F}}$ a face data for $\Newt I$.  
\begin{enumerate}
    \item The differential operator $\delta_c = x^{-\bw} H_{-\bw}(\theta)\prod_{\tau\in \mathcal{F}}(g_\tau(\theta), \lfloor cM_\tau\rfloor)!$ fixes $J_c$.
    \item The jumping numbers of $I$ are exactly $\left\{\min_{\tau\in\mathcal{F}}\left\{\frac{1}{M_\tau}g_\tau(\ba+\bw)\right\}\mid \ba\in S\right\}$.
    \item The log canonical threshold of $I$ is  $\min_{\tau\in\mathcal{F}}\left\{\frac{1}{M_\tau}g_\tau(\bw)\right\}$.
\end{enumerate}
\end{maincor}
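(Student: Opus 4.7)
The plan is to reduce all three statements to the case of a single polyhedron, namely the translate $c\cdot\Newt I - \bw$, and then apply part (2) of Theorem~\ref{thm:polyhedra} (the interior version, since the Blickle--Howald description of $J(X,I^c)$ uses the interior of $c\cdot\Newt I$).

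First I would compute the face data of $c\cdot\Newt I - \bw$ from that of $\Newt I$. Since $\Newt I = \{\ba : g_\tau(\ba) \geq M_\tau,\ \tau \in \mathcal{F}\}$, scaling by $c$ and translating by $-\bw$ gives
\[
c\cdot\Newt I - \bw \;=\; \{\ba : g_\tau(\ba) \geq cM_\tau - g_\tau(\bw),\ \tau\in\mathcal{F}\},
\]
so its face data is $\bigl(g_\tau,\, cM_\tau - g_\tau(\bw)\bigr)_{\tau \in \mathcal{F}}$. By the Blickle--Howald description, $J(X,I^c) = \langle x^{\ba} \mid \ba \in \inter(c\cdot\Newt I - \bw)\rangle$. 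Applying Theorem~\ref{thm:polyhedra}(2) with $n=1$ and $\bd = -\bw$, the product in the formula for $f$ becomes
\[
\prod_{\tau\in\mathcal{F}} \bigl(g_\tau(\theta),\, \lfloor (cM_\tau - g_\tau(\bw)) - g_\tau(-\bw)\rfloor\bigr)! \;=\; \prod_{\tau\in\mathcal{F}} \bigl(g_\tau(\theta),\, \lfloor cM_\tau\rfloor\bigr)!,
\]
since $g_\tau$ is linear. Taking $h(\theta) = H_{-\bw}(\theta)\cdot f \in (H_{-\bw}) \cap (f)$ then yields the stated $\delta_c$ and, by Theorem~\ref{thm:polyhedra}(2), this operator fixes $J_c$, proving (1).

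For (2), I would read off jumping numbers directly from the Blickle--Howald description rather than from the differential operator: $x^{\ba} \in J(X,I^c)$ if and only if $g_\tau(\ba + \bw) > c M_\tau$ for every $\tau\in\mathcal{F}$, i.e.\ $c < c_\ba := \min_{\tau\in\mathcal{F}} g_\tau(\ba+\bw)/M_\tau$. Thus as $c$ crosses $c_\ba$ from below, the monomial $x^{\ba}$ leaves the multiplier ideal, showing that the jumping numbers are exactly the set $\{c_\ba : \ba\in S\}$. Part (3) is then the special case $\ba = \b0$: the log canonical threshold is the smallest positive $c$ with $J(X,I^c)\neq R$, which is $c_{\b0} = \min_\tau g_\tau(\bw)/M_\tau$.

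The only genuinely technical point is verifying the face data computation and that the floor-based formula from Theorem~\ref{thm:polyhedra}(2) specializes cleanly to $\lfloor cM_\tau\rfloor$; once that is in hand, (1) is a direct invocation of the main polyhedron theorem, and (2) and (3) are essentially elementary consequences of the Blickle--Howald description. I expect no real obstacle beyond bookkeeping of the translation by $-\bw$.
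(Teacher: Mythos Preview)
Your proposal is correct and follows essentially the same route as the paper: part (1) is obtained by specializing Theorem~\ref{thm:polyhedra}(2) to the single polyhedron $c\cdot\Newt I - \bw$ with $\bd=-\bw$, and parts (2)--(3) come from the equivalence $x^{\ba}\in J_c \iff g_\tau(\ba+\bw)>cM_\tau$ for all $\tau$. The only cosmetic difference is that the paper phrases (2) via the criterion $\delta_c(x^{\ba+\bw})\neq 0$ from Theorem~\ref{thm:polyhedra} before unpacking it to the same inequality, whereas you read it off the Blickle--Howald description directly; these are the same computation.
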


We provide similar descriptions for mixed multiplier ideals, as well as ideal membership problems on integral closures of powers and symbolic powers in Section 5. All of these are specializations of the singular combinatorial geometric principal in last theorem but we work out samples of specific applications as well as the explicit differential operators at play. 

\

\noindent {\bf Acknowledgements:} We thank Mark Johnson and Paolo Mantero for supportive conversations and discussions on the material.

\section{Preliminaries and notation}

Throughout, $x$ will denote a list of variables $x_1,\ldots,x_d$ for $d \geq 1$. Thus we denote by $\CC[x] := \CC[x_1,\ldots,x_d]$ for the polynomial ring with variables $x$.  For a vector $\bv\in\ZZ^d$, $\bv=(v_1,\ldots,v_d)$ we denote by $x^{\bv}$ the monomial $x_1^{v_1}x_2^{v_2}\cdots x_d^{v_d}$. 

\

We also fix $\sigma = \cone(\bv_1,\ldots,\bv_n)$ a rational polyhedral cone for $\bv_1,\ldots,\bv_n \in \ZZ^d$ and set $R = \CC[S]$ where $S = \sigma^{\vee} \cap \ZZ^d$ which identifies $R = \CC[x^{\bv_1},\ldots,x^{\bv_n}]$ as a normal affine semigroup ring. Unless otherwise stated $I \subset R$ is a proper monomial ideal. We denote the exponent set by $\Exp I := \{ \ba \in S \colon x^{\ba} \in I \}$ and by $\int S$ the interior of $S$.    

\

We denote by $\cD$ the differential operators of $R$, for which we utilize the following description. Setting $\theta_{x_i} = x_i \partial_{x_i}$ and $\CC\langle\theta\rangle := \CC\langle\theta_{x_1},\ldots,\theta_{x_d}\rangle$, one may view $\cD$ as a subring of $\CC[ x^{\pm 1} ]\langle \theta\rangle$. We also utilize the descending factorial notation introduced in \cite{BCK+21,BCK+}, namely for $g \in \CC[x]$, $$(g,n)! = \prod_{i=0}^n (g-i) = 
\begin{cases}
g(g-1)(g-2)\cdots(g-n) & \text{if }n\geq 0 \\ 
1 & \text{if }n<0.
\end{cases}.$$ 

\

Throughout, for $f \in \CC[x]$ we abuse notation, without comment, by substituting either elements of the lattice $\ZZ^d$ or the tuple of variables $\theta$. Specifically, for $\ba \in \ZZ^d$, $\ba = (a_1,\ldots,a_d)$, we denote by $f(\ba) := f(a_1,\ldots,a_d)$. Similarly, we can evaluate such a function on $\theta$ to obtain a differential operator $f(\theta) = f(\theta_{x_1},\ldots,\theta_{x_d})$. Note that there is potential confusion in this abuse, if $\delta$ for example is a differential operator, then we also denote by $\delta(x^{\ba})$ for its action on a monomial. However, often we consider differential operators $\delta = x^{\bd} f$, so we are forced to consider $f$ both as its polynomial $\CC[x]$ and as the polynomial  differential operator $f(\theta)\in \CC\langle\theta\rangle$. For example, the notation $(f,n)!$ is consistent if we view $f$ as a polynomial or as a differential operator.

We also have need to consider the vanishing locus of a differential operator $f$ defined as follows. 

\begin{dff} For $f \in \CC[x]$, its {\it monomial vanishing locus} is the set $$V_{\mon}(f) := \{ \ba \in \ZZ^d \colon f(\ba) = \b0\} \subset \ZZ^d.$$ 
\end{dff}

\noindent The facets $\tau_1,\ldots,\tau_m$ of $\sigma^\vee$ determine unique homogeneous linear forms $h_i \in \CC[x]$ via their primitive support functions. For $\bd \in \ZZ^d$, the polynomials $H_{\bd}$, where $$H_{\bd} = \prod_{i} (h_i, h_i(-\bd)-1)!,$$ play an important role via the following theorem of Saito and Travis, \cite[Thm. 3.2.2]{ST01}.

\begin{thm}\label{thm:ST}[Saito-Traves]
There is a graded decomposition $$\cD(R) = \bigoplus_{\bd \in \ZZ^d} x^{\bd} ( H_{\bd}(\theta)).$$
Where the homogeneous elements are of the form $\delta = x^{\bd}f(\theta)$, where $f\in \CC[x]$, $H_\bd$ divides $f$, and for any monomial $x^a\in R$, we have that $\delta(x^\ba) = f(\ba)x^{\ba + \bd}$.
\end{thm}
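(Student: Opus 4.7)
The plan is to realize $\cD(R)$ as a $\ZZ^d$-graded subring of $\cD(\CC[x^{\pm 1}])$ and then identify, in each degree-$\bd$ piece, exactly which operators preserve $R$. Since $\sigma^{\vee}$ is full-dimensional, $\CC[x^{\pm 1}]$ is the localization of $R$ obtained by inverting all monomials of $R$, so every $\delta\in\cD(R)$ extends uniquely to a differential operator on this localization. Under the identification $\cD(\CC[x^{\pm 1}])=\CC[x^{\pm 1}]\langle\theta\rangle$, the torus action induces a $\ZZ^d$-grading whose degree-$\bd$ piece is $x^{\bd}\CC[\theta]$. Because $R\hookrightarrow\CC[x^{\pm 1}]$ is torus equivariant, $\cD(R)$ inherits this grading, and the task reduces to characterizing those $f\in\CC[\theta]$ for which $\delta=x^{\bd}f(\theta)$ maps $R$ into $R$.

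A direct computation gives $\delta(x^{\ba})=f(\ba)\,x^{\ba+\bd}$, so $\delta(R)\subseteq R$ precisely when $f$ vanishes on every lattice point $\ba\in S$ with $\ba+\bd\notin S$. Using the facet description $S=\{\ba\in\ZZ^d : h_i(\ba)\geq 0 \text{ for all } i\}$, this exceptional set is
$$V_{\bd}=\bigcup_{i=1}^{m}\bigcup_{k=0}^{h_i(-\bd)-1}\{\ba\in S : h_i(\ba)=k\}.$$
If $H_{\bd}\mid f$ in $\CC[x]$, then by construction $f(\ba)=0$ for every $\ba\in V_{\bd}$, so $\delta$ preserves $R$. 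The nontrivial content of the theorem is the converse, which I expect to be the main obstacle.

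For the converse, I would fix a pair $(i,k)$ with $0\leq k\leq h_i(-\bd)-1$ and show that the lattice slice $W_{i,k}=\{\ba\in S : h_i(\ba)=k\}$ is Zariski dense in the complex hyperplane $\{h_i=k\}\subset\CC^d$. A lattice point $\ba_0\in W_{i,k}$ can be produced by combining a lattice point with $h_i$-value $k$ (which exists because $h_i:\ZZ^d\to\ZZ$ is surjective by primitivity) with a large integer multiple of a relative-interior lattice point of the facet $\tau_i=\sigma^{\vee}\cap\{h_i=0\}$. Since $\sigma^{\vee}$ is closed under addition and $h_i$ vanishes on $\tau_i$, the translate $\ba_0+(\tau_i\cap\ZZ^d)$ lies in $W_{i,k}$; and $\tau_i$ being a $(d-1)$-dimensional rational cone, $\tau_i\cap\ZZ^d$ contains $d-1$ linearly independent integer vectors whose nonnegative-integer combinations are Zariski dense in the ambient hyperplane. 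Consequently, $f$ vanishes on all of $\{h_i=k\}$, so the linear form $h_i-k$ divides $f$ in $\CC[x]$. The primitive facet normals $h_1,\ldots,h_m$ are pairwise non-proportional, so the full family $\{h_i-k\}$, taken over all $i$ and $0\leq k\leq h_i(-\bd)-1$, consists of pairwise coprime linear factors in $\CC[x]$, and unique factorization yields $H_{\bd}\mid f$. The action formula $\delta(x^{\ba})=f(\ba)\,x^{\ba+\bd}$ is then immediate from $\theta_{x_j}(x^{\ba})=a_j x^{\ba}$.
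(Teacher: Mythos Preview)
The paper does not supply its own proof of this statement; it is quoted as background from \cite[Thm.~3.2.2]{ST01}. There is therefore nothing in the paper to compare your argument against.

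Your proof sketch is the standard one and is correct in outline: embed $\cD(R)$ in $\cD(\CC[x^{\pm1}])=\bigoplus_{\bd}x^{\bd}\CC[\theta]$, observe that the degree-$\bd$ piece of the image consists of those $x^{\bd}f(\theta)$ with $f$ vanishing on every $\ba\in S$ with $\ba+\bd\notin S$, and then use Zariski density of the lattice slices $W_{i,k}=\{\ba\in S:h_i(\ba)=k\}$ in the affine hyperplanes $\{h_i=k\}$ to force each factor $h_i-k$ of $H_{\bd}$ to divide $f$. The coprimality of the affine-linear forms $h_i-k$ then gives $H_{\bd}\mid f$ by unique factorization. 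One step you pass over that deserves an explicit sentence is the identification of $\cD(R)$ with the subring of $\cD(\CC[x^{\pm1}])$ consisting of operators preserving $R$: the forward inclusion is the usual extension to a localization, while the reverse---that any $\delta\in\cD(\CC[x^{\pm1}])$ with $\delta(R)\subseteq R$ restricts to a genuine element of $\cD(R)$---follows by induction on order once one notes that $[\delta,r]$ again preserves $R$ for every $r\in R$. With that said, your construction of a base point $\ba_0\in W_{i,k}$ and the density argument via $\ba_0+(\tau_i\cap\ZZ^d)$ are sound, and the action formula $\delta(x^{\ba})=f(\ba)x^{\ba+\bd}$ is immediate.
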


\begin{xmp}
We take a moment to explore in an explicit example to clarify the introduced notation. Suppose $S$ is the semigroup generated by $x, xy, xy^2$ and $R = \CC[S] \subset \CC[x,y]$ so $\cD$ is a graded subring of $\CC[x^{\pm 1}, y^{\pm 1}]\langle \theta_x, \theta_y\rangle$. The primitive support functions are $h_1 = 2x - y$ and $h_2 = y$. This yields $h_1(\theta) = 2 \theta_x - \theta_y$ and $h_2(\theta) = \theta_y$. For each $\bd \in \ZZ^2$, we have a differential operator arising from $H_{\bd}$. For $\bd = (-2,-1)$, $h_1(-\bd) = h_1(2,1) = 4-1 = 3$ and $h_2(-\bd) = h_2(2,1) = 1$. Therefore, 
\begin{eqnarray*}
H_{\bd}(\theta) & = & \prod_{i} (h_i(\theta), h_i(-(-2,-1))-1)! \\
& = & (h_1(\theta),2)!(h_2(\theta),0)!\\
& = & (2\theta_x - \theta_y)(2\theta_x-\theta_y-1)(2\theta_x-\theta_y-2)(\theta_y).
\end{eqnarray*} It is also helpful to see the action of $h_1(\theta), h_2(\theta)$, and $H_{\bd}(\theta)$ on monomials. For example  $$h_1(\theta)(x^ay^b) = (2\theta_x - \theta_y)(x^ay^b) = 2 a x^ay^b - b x^ay^b = (2a - b) x^a y^b = h_1(a,b) x^ay^b.$$ Similarly, one may trivially verify
\begin{eqnarray*}
H_{\bd}(\theta)(x^ay^b) & = & (h_1,2)!(a,b)(h_2,0)!(a,b)x^ay^b \\ & = & (2a-b)(2a-b-1)(2a-b-2)bx^ay^b\\
\end{eqnarray*}

\end{xmp}

\noindent {\bf Notation:} Throughout the paper, unless otherwise stated $R = \CC[S]$ is a normal affine semigroup ring with $S = \sigma^{\vee} \cap \ZZ^d$ for rational polyhedral cone $\sigma$ and denote by $h_i$ the primitive support functions of $\sigma^{\vee}$. We always denote by $I$ a proper monomial ideal of $R$ and $\bd \in \ZZ^d$. We set $\mathcal{D} := \mathcal{D}(R)$ and $\delta = x^{\bd}f(\theta)$ a homogeneous differential operator of degree $\bd$ with $f \in (H_\bd) \subseteq \CC[x]$.

\section{Every monomial ideal is $\delta$-fixed}
Throughout, the aim of the paper is to study monomial ideals and what differential operators fix them in the following sense. 

\begin{dff}
For $\delta\in \cD$, an ideal $I \subset R$ is called {\it $\delta$-fixed} if $\delta(I) = I$ and {\it $\delta$-compatible} if $\delta(I) \subset I$. 
\end{dff} 

We characterize $\delta$-compatible ideals for homogeneous $\delta$ in terms of the exponent set.

\begin{thm}\label{thm:deltacompatiblecriterion} The ideal $I$ is $\delta$-compatible if and only if  $f(\ba) = 0$ for all $$\ba\in \Exp I\setminus (\Exp I - \bd).$$
\end{thm}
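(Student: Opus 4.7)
The plan is to translate $\delta(I)\subseteq I$ directly into a monomial-wise condition via the Saito--Traves action. Since $I$ is monomial, $\delta$-compatibility is equivalent to $\delta(x^{\ba})\in I$ for every generator $x^{\ba}$ with $\ba\in\Exp I$. By Theorem~\ref{thm:ST}, $\delta(x^{\ba})=f(\ba)x^{\ba+\bd}$, so this membership condition holds exactly when either $f(\ba)=0$ or $x^{\ba+\bd}\in I$, i.e.\ $\ba\in\Exp I-\bd$. The statement of the theorem is then just the contrapositive of this dichotomy.

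I would organize the argument as a short two-direction check. For the forward direction, assume $\delta(I)\subseteq I$ and pick $\ba\in \Exp I\setminus(\Exp I-\bd)$; then $\ba+\bd\notin \Exp I$, so $f(\ba)x^{\ba+\bd}\in I$ forces $f(\ba)=0$. For the converse, given any $\ba\in\Exp I$, split into the cases $\ba\in \Exp I-\bd$ (so $\ba+\bd\in\Exp I$ and $\delta(x^{\ba})\in I$ automatically) and $\ba\in\Exp I\setminus(\Exp I-\bd)$ (so $f(\ba)=0$ by hypothesis and $\delta(x^{\ba})=0$).

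The only subtlety I expect is handling the case $\ba+\bd\notin S$, since then $x^{\ba+\bd}$ is not a monomial of $R$ at all. This is absorbed by the hypothesis $f\in(H_\bd)$ coming from Theorem~\ref{thm:ST}: a direct inspection of the factorials $(h_i,h_i(-\bd)-1)!$ shows that $H_\bd(\ba)=0$ whenever $\ba\in S$ satisfies $h_i(\ba+\bd)<0$ for some facet $i$, i.e.\ whenever $\ba+\bd\notin S$. Hence $f(\ba)=0$ automatically in this situation, so the condition ``$\ba+\bd\in\Exp I$'' can be replaced throughout by ``$\ba\in\Exp I-\bd$'' without a separate case, and the argument proceeds cleanly. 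There is no serious obstacle; the content of the proof is really just unpacking the definitions, with the $H_\bd$-divisibility playing the role of a boundary condition that keeps the statement symmetric.
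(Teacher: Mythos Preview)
Your proposal is correct and follows essentially the same approach as the paper: both compute $\delta(x^{\ba})=f(\ba)x^{\ba+\bd}$ and observe that $\delta(x^{\ba})\in I$ iff $f(\ba)=0$ or $\ba+\bd\in\Exp I$, then restate this as the contrapositive. Your explicit treatment of the case $\ba+\bd\notin S$ via $H_\bd$-divisibility is a careful addition that the paper's short proof leaves implicit, but the underlying argument is the same.
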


\begin{proof} For any $\ba\in \Exp I$, we have that $\delta(x^\ba) = f(\ba)x^{\bd + \ba}$. Therefore, $\delta(x^\ba)\in I$ if and only if $\bd+\ba\in \Exp I$ or $f(\ba) = 0$.  This is equivalent to the statement that if $\ba\notin \Exp I -\bd$, then $f(\ba)= 0$.
\end{proof}

Next we turn to characterizing when a given ideal is $\delta$-fixed for which we need the following lemma.

\begin{lem}\label{lem:imagedelta} The element $x^\ba\in R$ is in the image of $\delta$ if and only if $f(\ba - \bd)\neq 0$.
\end{lem}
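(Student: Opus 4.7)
My plan is to unwind the image of $\delta$ using the explicit action on monomials given by Theorem~\ref{thm:ST}, namely $\delta(x^\bc) = f(\bc) x^{\bc + \bd}$ for every $\bc \in S$. Since $\delta$ is $\CC$-linear and distinct exponents $\bc \in S$ yield distinct shifted exponents $\bc + \bd$, the image $\im \delta$ is the $\CC$-linear span of those shifted monomials $x^{\bc + \bd}$ for which $f(\bc) \neq 0$.

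For the forward direction, suppose $x^\ba = \delta(g)$ for some $g = \sum_{\bc \in S} g_\bc x^\bc \in R$. Expanding gives $\delta(g) = \sum_{\bc \in S} g_\bc f(\bc) x^{\bc + \bd}$, and since distinct $\bc$ give distinct $\bc + \bd$, extracting the coefficient of $x^\ba$ shows that $g_{\ba - \bd} f(\ba - \bd) = 1$, so in particular $f(\ba - \bd) \neq 0$. For the backward direction, if $f(\ba - \bd) \neq 0$ then $\delta\bigl(f(\ba - \bd)^{-1} x^{\ba - \bd}\bigr) = x^\ba$ exhibits $x^\ba$ as an element of $\im \delta$.

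There is no real obstacle: the proof is essentially bookkeeping via the graded decomposition of Theorem~\ref{thm:ST}. The one technical point is ensuring $\ba - \bd \in S$ (so $x^{\ba - \bd}$ is a valid monomial in $R$) whenever $f(\ba - \bd) \neq 0$; this is automatic in the standing setting where $-\bd \in S$, since then $\ba - \bd = \ba + (-\bd) \in S + S = S$ for any $\ba \in S$, and otherwise must be extracted from the divisibility $f \in (H_\bd)$.
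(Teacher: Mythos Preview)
Your proof is correct and follows the same approach as the paper's: homogeneity reduces the question to whether $x^\ba$ is a scalar multiple of $\delta(x^{\ba-\bd}) = f(\ba-\bd)\,x^\ba$. You are in fact more careful than the paper in flagging the need for $\ba-\bd\in S$ in the backward direction. This is indeed automatic when $-\bd\in S$, which is the only setting in which the lemma is subsequently invoked; however, your remark that it can otherwise be extracted from the divisibility $H_\bd\mid f$ is not correct in general (for instance, with $R=\CC[x]$, $\bd=1$, and $f=H_\bd=1$, one has $\delta=x$ and $f(-1)=1\neq 0$, yet $x^0=1\notin\im\delta$), so the lemma as literally stated really does need the standing hypothesis $-\bd\in S$.
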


\begin{proof} Since $\delta$ is homogeneous we may restrict our attention to monomials.  We have that $x^{\ba} = \delta(\alpha x^{\ba-\bd})$ for some $\alpha\in \CC$ if and only if 
$$x^{\ba} = \alpha f(\ba-\bd)x^{\ba}.$$
This is possible precisely when $f(\ba-\bd)\neq 0$.
\end{proof}

\begin{thm} \label{thm:fixedcriteria}  The ideal $I$ is $\delta$-fixed if and only if $-\bd\in S$ and $$\Exp I \cap V_\mon(f) = \Exp I \setminus (\Exp I-\bd).$$
\end{thm}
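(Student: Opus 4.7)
The natural plan is to break the equality $\delta(I)=I$ into the two inclusions $\delta(I)\subseteq I$ and $I\subseteq \delta(I)$, handled by Theorem~\ref{thm:deltacompatiblecriterion} and Lemma~\ref{lem:imagedelta} respectively, and then combine the resulting constraints.

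By Theorem~\ref{thm:deltacompatiblecriterion}, the inclusion $\delta(I)\subseteq I$ is equivalent to $\Exp I\setminus(\Exp I-\bd)\subseteq V_\mon(f)$; since this set lies inside $\Exp I$, this is the same as $\Exp I\setminus(\Exp I-\bd)\subseteq \Exp I\cap V_\mon(f)$. For the reverse inclusion, homogeneity of $\delta$ and monomiality of $I$ reduce $I\subseteq \delta(I)$ to the monomial statement that each $x^\ba$ with $\ba\in \Exp I$ lies in $\delta(I)$. By Lemma~\ref{lem:imagedelta} and homogeneity, this means $\ba-\bd\in \Exp I$ and $f(\ba-\bd)\neq 0$. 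In set-theoretic form these two requirements read $\Exp I-\bd\subseteq \Exp I$ and $(\Exp I-\bd)\cap V_\mon(f)=\emptyset$.

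The first requirement is precisely what forces $-\bd\in S$. Sufficiency is immediate from $\Exp I+S\subseteq \Exp I$. For necessity, pick any $\ba\in \Exp I$ (which is nonempty since $I\neq 0$) and iterate to get $\ba-k\bd\in S\subseteq \sigma^\vee$ for all $k\geq 0$; scaling by $1/k$ and letting $k\to\infty$ in the closed convex cone $\sigma^\vee$ yields $-\bd\in\sigma^\vee\cap\ZZ^d=S$. Thus, under the hypothesis $-\bd\in S$ the first requirement becomes automatic, and without it the second inclusion $I\subseteq \delta(I)$ cannot hold.

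Finally, given $-\bd\in S$ together with $\Exp I\setminus(\Exp I-\bd)\subseteq \Exp I\cap V_\mon(f)$, the remaining requirement $(\Exp I-\bd)\cap V_\mon(f)=\emptyset$ says exactly that $\Exp I\cap V_\mon(f)\subseteq \Exp I\setminus(\Exp I-\bd)$. The two inclusions then assemble into the claimed equality $\Exp I\cap V_\mon(f)=\Exp I\setminus(\Exp I-\bd)$. The only step I expect to need any real thought is the cone-theoretic extraction of $-\bd\in S$ from iteration inside $\sigma^\vee$; everything else is a direct unpacking of Theorem~\ref{thm:deltacompatiblecriterion} and Lemma~\ref{lem:imagedelta}.
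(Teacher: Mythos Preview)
Your proof is correct and follows essentially the same route as the paper: split $\delta(I)=I$ into the two inclusions, handle $\delta(I)\subseteq I$ via Theorem~\ref{thm:deltacompatiblecriterion} and $I\subseteq\delta(I)$ via Lemma~\ref{lem:imagedelta}, and assemble the resulting set conditions into the stated equality. The only difference is the extraction of $-\bd\in S$: the paper argues that if $-\bd\notin S$ then the sequence $\ba-n\bd$ eventually exits $\Exp I$, while you pass to the limit $\frac{1}{k}(\ba-k\bd)\to -\bd$ inside the closed cone $\sigma^\vee$; both are short and valid.
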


\begin{proof} By Theorem~\ref{thm:deltacompatiblecriterion}, $I$ is $\delta$-compatible if and only if $$\Exp I\setminus (\Exp I -\bd)\subseteq V_\mon(f).$$ To show $I\subseteq \delta(I)$,  it suffices to show that for every $\ba\in \Exp I$, $\ba-\bd\in \Exp I$ and $f(\ba-\bd)\neq 0$ by Lemma~\ref{lem:imagedelta}.  If $-\bd\notin S$, then for some $n\geq 1$,  $\ba -(n-1)\bd\in \Exp I$ but $\ba-n\bd \notin \Exp I$. Whence $x^{\ba-(n-1)\bd}\notin \delta(I)$.  Thus, $-\bd\in S$ is necessary.  It is also necessary that $f(\ba-\bd)\neq 0$ for all $\ba\in \Exp I$, or in other words, $f(\ba)\neq 0$ for every $\ba\in \Exp I - \bd$. It follows these are also sufficient again by Lemma~\ref{lem:imagedelta}.  Therefore $I=\delta(I)$ if and only if $-\bd\in S$ and $V_\mon(f)\cap (\Exp I-\bd) = \varnothing$, that is, $\Exp I \cap V_\mon(f) \subseteq \Exp I\setminus( \Exp I-\bd)$.
\end{proof}

Now we can show for fixed ideal $I$, there are many $\delta$ which fix $I$.

\begin{lem}\label{lem:nonzero}
For $\ba\in S$, $H_\bd(\ba) > 0$ if and only if $\ba\in -\bd + S$ and $H_{\bd}(\ba) = 0$ otherwise.
\end{lem}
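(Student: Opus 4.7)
The plan is to unpack the definition of $H_\bd$ directly and analyze sign/vanishing of each factor using the fact that for $\ba \in S$, each primitive support function $h_i$ satisfies $h_i(\ba) \geq 0$.

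First, by definition $H_\bd(\ba) = \prod_i (h_i(\ba), h_i(-\bd)-1)!$, where each factor equals $1$ when $h_i(-\bd) \leq 0$ and equals the integer product $h_i(\ba)(h_i(\ba)-1)\cdots(h_i(\ba)-h_i(-\bd)+1)$ when $h_i(-\bd) \geq 1$. I would organize the argument by isolating a single index $i$ and showing: (a) if $h_i(-\bd) \leq 0$, the $i$th factor is a positive integer; (b) if $h_i(-\bd) \geq 1$ and $h_i(\ba) \geq h_i(-\bd)$, then all $h_i(-\bd)$ consecutive integers in the product are at least $h_i(\ba) - h_i(-\bd) + 1 \geq 1$, so the factor is strictly positive; and (c) if $h_i(-\bd) \geq 1$ and $0 \leq h_i(\ba) \leq h_i(-\bd) - 1$, then one of the descending terms is exactly zero and the factor vanishes. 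Since $\ba \in S$ forces $h_i(\ba) \geq 0$, these three cases are exhaustive, so each factor is either a positive integer or zero, and hence the product $H_\bd(\ba)$ is itself either strictly positive or zero.

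Next, I would translate the nonvanishing condition into the claimed membership statement. Nonvanishing is equivalent to the condition that $h_i(\ba) \geq h_i(-\bd)$ holds for every $i$ with $h_i(-\bd) \geq 1$. For indices with $h_i(-\bd) \leq 0$, we automatically have $h_i(\ba + \bd) = h_i(\ba) + h_i(\bd) \geq 0 + 0 = 0$ using $\ba \in S$. Combining both cases, nonvanishing of $H_\bd(\ba)$ is equivalent to $h_i(\ba + \bd) \geq 0$ for all facets $\tau_i$, which by the description of $\sigma^\vee$ via its primitive support functions is exactly $\ba + \bd \in \sigma^\vee$. Since $\ba, \bd \in \ZZ^d$, this is in turn equivalent to $\ba + \bd \in S$, i.e. $\ba \in -\bd + S$.

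There isn't really a hard step here: the proof is a direct bookkeeping argument based on the descending factorial convention and the definition of the dual cone via support functions. The only subtlety worth flagging explicitly is handling the two regimes $h_i(-\bd) \leq 0$ and $h_i(-\bd) \geq 1$ uniformly so that the final equivalence reduces cleanly to $h_i(\ba+\bd) \geq 0$ for all $i$.
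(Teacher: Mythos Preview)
Your proposal is correct and follows essentially the same approach as the paper: both unpack $H_\bd$ via the descending factorials, use $h_i(\ba)\geq 0$ for $\ba\in S$ to see each factor is a nonnegative integer, and translate positivity into $h_i(\ba+\bd)\geq 0$ for all $i$, i.e.\ $\ba+\bd\in S$. Your version is slightly more explicit in separating the regimes $h_i(-\bd)\leq 0$ and $h_i(-\bd)\geq 1$, which the paper handles implicitly as a vacuous range in the product.
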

\begin{proof} Recall $H_\bd = \prod_i (h_i,h_i(-\bd)-1)!$.
As $\ba \in S$, $H_\bd(\ba) > 0$ if and only if $h_i(\ba) - k > 0$ for all $0 \leq k \leq h_i(-\bd)-1$. This happens if and only if $h_i(\ba) - h_i(-\bd)+1 > 0$ which by linearity occurs if and only if $h_i(\ba + \bd) \geq 0$. Equivalently, this occurs when $\ba + \bd \in S$, or $\ba \in -\bd + S$.
\end{proof}

\begin{thm}\label{thm:everyidealisdeltafixed} Assume $-\bd\in S$.  There exists a homogeneous element $\delta\in \cD(R)$ of degree $\bd$ such that $I$ is $\delta$-fixed.
\end{thm}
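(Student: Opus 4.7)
The plan is to construct $f$ explicitly so that $\delta = x^{\bd}f(\theta)$ meets the criterion of Theorem~\ref{thm:fixedcriteria}. Let $x^{\ba_1},\ldots,x^{\ba_k}$ be the monomial generators of $I$, so $\Exp I = \bigcup_{j=1}^k (\ba_j + S)$. Since $-\bd \in S$ and each $\ba_j \in S$, the shifts $\ba_j - \bd$ lie in $S$, hence $h_i(\ba_j - \bd) \geq h_i(-\bd) \geq 0$ for every primitive support function $h_i$. The idea is to imitate the definition of $H_\bd$ once for each generator, using the shifted point $\ba_j - \bd$ in place of $-\bd$, and then sum the results. Concretely, set
$$F_j(\theta) \;=\; \prod_{i}(h_i(\theta),\, h_i(\ba_j - \bd) - 1)! \qquad \text{and} \qquad f(\theta) \;=\; \sum_{j=1}^k F_j(\theta).$$

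Three observations do the rest. First, because $h_i(\ba_j - \bd) \geq h_i(-\bd)$, the descending factorial $(h_i, h_i(\ba_j - \bd) - 1)!$ contains $(h_i, h_i(-\bd) - 1)!$ as a factor, so $H_\bd \mid F_j$ for every $j$, whence $H_\bd \mid f$; thus $\delta := x^\bd f(\theta)$ lies in $\cD(R)$ by Theorem~\ref{thm:ST}. Second, a verbatim repetition of the argument of Lemma~\ref{lem:nonzero} shows that for each $\ba \in S$ one has $F_j(\ba) \geq 0$, with $F_j(\ba) > 0$ exactly when $\ba + \bd \in \ba_j + S$ and $F_j(\ba) = 0$ otherwise. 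Third, summing non-negatives, $f(\ba) = 0$ for $\ba \in S$ if and only if every $F_j(\ba) = 0$, equivalently $\ba + \bd \notin \ba_j + S$ for all $j$, equivalently $\ba + \bd \notin \Exp I$.

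Intersecting with $\Exp I$ then gives $V_\mon(f) \cap \Exp I = \{\ba \in \Exp I : \ba + \bd \notin \Exp I\} = \Exp I \setminus (\Exp I - \bd)$, and together with the hypothesis $-\bd \in S$ this is exactly the criterion of Theorem~\ref{thm:fixedcriteria}, proving that $I$ is $\delta$-fixed. The one subtle step I expect to emphasize is the non-negativity of the $F_j$ on $S$: it is what allows $\sum_j F_j$ to vanish precisely when all $F_j$ vanish simultaneously, converting the conjunction ``$\ba + \bd \notin \ba_j + S$ for all $j$'' into a single polynomial identity. A naive product $\prod_j F_j$ would instead vanish whenever any one $F_j$ does, killing monomials in $\Exp I - \bd$ that must remain in $\delta(I)$.
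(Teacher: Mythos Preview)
Your proof is correct and is essentially identical to the paper's own argument: your $F_j$ is precisely $H_{\bd-\ba_j}$ (expand the definition $H_{\bd-\ba_j}=\prod_i(h_i,h_i(\ba_j-\bd)-1)!$), so your $f=\sum_j F_j$ is the paper's $f=\sum_j H_{\bd-\ba_j}$, and the three observations you make---divisibility by $H_\bd$, non-negativity on $S$ via Lemma~\ref{lem:nonzero}, and the resulting characterization of $V_\mon(f)\cap\Exp I$---match the paper's proof line for line. The only difference is that you wrote out the factorial product explicitly rather than naming it as a shifted $H$-polynomial.
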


\begin{proof} Let $I = (x^{\ba_1},\ldots, x^{\ba_m})$, so that $\Exp I =\bigcup_i (\ba_i+S)$.  Let $f = \sum_i H_{\bd-\ba_i}$, and note that $H_{\bd-\ba_i}(\bb) \geq 0$ for all $i$ and all $\bb\in S$.  Therefore, for any $\bb\in\Exp I$, we have that $f(\bb)\neq 0$ if and only if there exists $i$ such that $H_{\bd-\ba_i}(\bb) > 0$, which occurs exactly when $\bb\in \ba_i-\bd + S$ by Lemma~\ref{lem:nonzero}.  Thus $f(\bb)\neq 0$ if and only if 
$$\bb\in \bigcup_i (\ba_i-\bd +S) = -\bd +\bigcup_i(\ba_i+S) = -\bd + \Exp I.$$
Thus, we have that $V_\mon(f)\cap \Exp I = \Exp I \setminus (\Exp I-\bd)$.  Furthermore, since $H_{\bd}$ divides $H_{\bd-\ba_i}$ for each $i$, we have that $f\in (H_\bd)$, and so $\delta = x^\bd f(\theta)\in \cD(R)$, and $I$ is $\delta$-fixed.
\end{proof}

\begin{xmp} Let $R=\CC[x,xy,xy^2,xy^3]$.  Let 

Let 
    \begin{align*}
    \tau_1 &= \RR_{\geq 0}(1,0) &h_1(\theta) &= \theta_y\\
    \tau_2 &= \RR_{\geq 0}(0,1) &h_2(\theta)&= 3\theta_x-\theta_y\\
   \end{align*}
be the facets of the cone $\sigma^\vee$ of $R$.     Consider the ideal $I=(x^2y^4, x^5y)$ which is illustrated in Figure \ref{fig:RNC3ideal}.  We have labeled the lattice points representing the exponents of monomials which are contained in the ideal in gray (we have also shaded this region in gray) and those in the ring which are not members of the ideal in black.

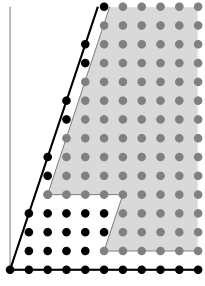
\begin{figure}[h]
  \centering
 \begin{tikzpicture}[scale=0.25]
\filldraw[fill=gray!30,draw=white] (10,1) -- (5,1) -- (6,4) --(2,4) -- (5.3,14) --(10,14) -- (10,1) -- cycle; 
%\filldraw[fill=gray!20,draw=white] (10,0) -- (0,0) -- (4.67,14) -- (5.3,14) -- (2,4) -- (6,4) -- (5,1) -- (10,1) -- (10,0) -- cycle;
\draw[black, thick] (0,0) -- (10.1 ,0);
\draw[gray] (0,0) -- (0,14);
\draw[black, thick] (0,0) -- (4.67,14);
\draw[gray] (2,4) -- (5.3,14);
\draw[gray] (2,4) -- (6,4);
\draw[gray] (5,1) -- (10,1);
\draw[gray] (5,1) -- (6,4);
\foreach \x in {1,2,3,4}{ \node[draw,circle,inner sep=1pt,black,fill] at (\x,3*\x) {};}
\foreach \x in {1,2,3,4,5}{ \node[draw,circle,inner sep=1pt,black,fill] at (\x,3*\x-1) {};}
\foreach \x in {2,3,4,5}{ \node[draw,circle,inner sep=1pt,gray,fill] at (\x,3*\x-2) {};}
\foreach \x in {0,1,...,10}{ \node[draw,circle,inner sep=1pt,black,fill] at (\x,0) {};}
\foreach \x in {1,2,3,4}{ \node[draw,circle,inner sep=1pt,black,fill] at (\x,1) {};}
\foreach \x in {5,6,...,10}{ \node[draw,circle,inner sep=1pt,gray,fill] at (\x,1) {};}
\foreach \x in {2,3,4,5}{ \node[draw,circle,inner sep=1pt,black,fill] at (\x,2) {};}
\foreach \x in {6,7,...,10}{ \node[draw,circle,inner sep=1pt,gray,fill] at (\x,2) {};}
\foreach \x in {2,3,4,5}{ \node[draw,circle,inner sep=1pt,black,fill] at (\x,3) {};}
\foreach \x in {6,7,...,10}{ \node[draw,circle,inner sep=1pt,gray,fill] at (\x,3) {};}
\foreach \x in {2,3,...,6}{ \node[draw,circle,inner sep=1pt,gray,fill] at (\x,4) {};}
\foreach \x in {7,8,...,10}{ \node[draw,circle,inner sep=1pt,gray,fill] at (\x,4) {};}
\foreach \x in {3,4,...,10}{ \node[draw,circle,inner sep=1pt,gray,fill] at (\x,5) {};}
\foreach \x in {3,4,...,10}{ \node[draw,circle,inner sep=1pt,gray,fill] at (\x,6) {};}
\foreach \x in {4,5,...,10}{ \node[draw,circle,inner sep=1pt,gray,fill] at (\x,7) {};}
\foreach \x in {4,5,...,10}{ \node[draw,circle,inner sep=1pt,gray,fill] at (\x,8) {};}
\foreach \x in {4,5,...,10}{ \node[draw,circle,inner sep=1pt,gray,fill] at (\x,9) {};}
\foreach \x in {5,6,...,10}{ \node[draw,circle,inner sep=1pt,gray,fill] at (\x,10) {};}
\foreach \x in {5,6,...,10}{ \node[draw,circle,inner sep=1pt,gray,fill] at (\x,11) {};}
\foreach \x in {5,6,...,10}{ \node[draw,circle,inner sep=1pt,gray,fill] at (\x,12) {};}
\foreach \x in {6,7,...,10}{ \node[draw,circle,inner sep=1pt,gray,fill] at (\x,13) {};}
\foreach \x in {6,7,...,10}{ \node[draw,circle,inner sep=1pt,gray,fill] at (\x,14) {};}
\end{tikzpicture}
\captionsetup{margin=0in,width=1.5in,font=small,justification=centering}
\caption{The ideal $I=(x^2y^4,x^5y)$}
\label{fig:RNC3ideal}
\end{figure}

Let $\bd=(-2,-1)$.  The differential operators in $\cD(R)$ of degree $(-2,-1)$ have the form $x^{-2}y^{-1}H_{(-2,-1)}(\theta)g(\theta)$ where $H_{(-2,-1)}(\theta)=(3\theta_x-\theta_y, 4)!\theta_y$.  In Figure \ref{fig:RNC3dopsex234} on the left, the lattice points in the solid gray region form the subset of $\Exp I$ whose images under any differential operator of degree $(-2,-1)$ also stay in $I$.  The lattice points in the vertically lined region are annihilated by any differential operator of degree $(-2,-1)$.  The lattice points in the crosshatched region represent monomials which lie in $I$, whose images lie outside of $I$ when acted upon by $x^{-2}y^{-1}H_{(-2,-1)}(\theta)$.  The middle and right images show similar regions for the homogeneous weights $\bd = (-4,-6)$ and $\bd = (-3,-8)$ respectively.

\begin{figure}[h]
 \centering
 \begin{tikzpicture}[scale=0.25]
 \filldraw[fill=gray!30,draw=white] (10,1) -- (5,1) -- (6,4) --(2,4) -- (5.3,14) --(10,14) -- (10,1) -- cycle; 
\pgfsetfillpattern{vertical lines}{gray}
\filldraw[%fill=gray!20,
draw=white] (2,4) -- (3,4) -- (6.3, 14) -- (5.3,14) -- (2,4) -- cycle;
\pgfsetfillpattern{crosshatch}{black!60}
\filldraw[%fill=gray!60, 
draw=white] (3,4) -- (6,4) -- (5,1) -- (10,1) -- (10,2) -- (7,2) -- (8,5) --(4,5) -- (7,14) -- (6.3,14)-- (3,4) -- cycle;
\draw[black, thick] (0,0) -- (10,0);
\draw[gray] (0,0) -- (0,14.5);
\draw[black, thick] (0,0) -- (4.67,14);
\draw[gray] (4,5) -- (7,14);
\draw[gray] (4,5) -- (8,5);
\draw[gray] (7,2) -- (10,2);
\draw[gray] (7,2) -- (8,5);
\draw[gray] (2,4) -- (5.3,14);
\draw[gray] (2,4) -- (6,4);
\draw[gray] (5,1) -- (10,1);
\draw[gray] (5,1) -- (6,4);
\draw[gray] (2,1) -- (5,1);
\draw[gray] (3,0) -- (10,0);
\draw[gray] (2,1) -- (6.3,14);
%\draw[cyan,thick] (1,0) -- (3,6);
%\foreach \y in {1,2}{ \node[draw,circle,inner sep=1pt,gray,fill] at (1,\y) {};}
\foreach \x in {1,2,3,4}{ \node[draw,circle,inner sep=1pt,black,fill] at (\x,3*\x) {};}
\foreach \x in {1,2,3,4,5}{ \node[draw,circle,inner sep=1pt,black,fill] at (\x,3*\x-1) {};}
\foreach \x in {2,3,4,5}{ \node[draw,circle,inner sep=1pt,gray,fill] at (\x,3*\x-2) {};}
\foreach \x in {0,1,...,10}{ \node[draw,circle,inner sep=1pt,black,fill] at (\x,0) {};}
\foreach \x in {1,2,3,4}{ \node[draw,circle,inner sep=1pt,black,fill] at (\x,1) {};}
\foreach \x in {5,6,...,10}{ \node[draw,circle,inner sep=1pt,gray,fill] at (\x,1) {};}
\foreach \x in {2,3,4,5}{ \node[draw,circle,inner sep=1pt,black,fill] at (\x,2) {};}
\foreach \x in {6,7,...,10}{ \node[draw,circle,inner sep=1pt,gray,fill] at (\x,2) {};}
\foreach \x in {2,3,4,5}{ \node[draw,circle,inner sep=1pt,black,fill] at (\x,3) {};}
\foreach \x in {6,7,...,10}{ \node[draw,circle,inner sep=1pt,gray,fill] at (\x,3) {};}
\foreach \x in {3,4,5,6}{ \node[draw,circle,inner sep=1pt,gray,fill] at (\x,4) {};}
\foreach \x in {7,8,9,10}{ \node[draw,circle,inner sep=1pt,gray,fill] at (\x,4) {};}
\foreach \x in {3,4,...,10}{ \node[draw,circle,inner sep=1pt,gray,fill] at (\x,5) {};}
\foreach \x in {3,4,...,10}{ \node[draw,circle,inner sep=1pt,gray,fill] at (\x,6) {};}
\foreach \x in {4,5,...,10}{ \node[draw,circle,inner sep=1pt,gray,fill] at (\x,7) {};}
\foreach \x in {4,5,...,10}{ \node[draw,circle,inner sep=1pt,gray,fill] at (\x,8) {};}
\foreach \x in {4,5,...,10}{ \node[draw,circle,inner sep=1pt,gray,fill] at (\x,9) {};}
\foreach \x in {5,6,...,10}{ \node[draw,circle,inner sep=1pt,gray,fill] at (\x,10) {};}
\foreach \x in {5,6,...,10}{ \node[draw,circle,inner sep=1pt,gray,fill] at (\x,11) {};}
\foreach \x in {5,6,...,10}{ \node[draw,circle,inner sep=1pt,gray,fill] at (\x,12) {};}
\foreach \x in {6,7,...,10}{ \node[draw,circle,inner sep=1pt,gray,fill] at (\x,13) {};}
\foreach \x in {6,7,...,10}{ \node[draw,circle,inner sep=1pt,gray,fill] at (\x,14) {};}
\foreach \x in {3,4,5}{ \node[draw,circle,inner sep=1pt,gray,fill] at (\x,3*\x-3) {};}
\foreach \x in {3,4,5,6}{ \node[draw,circle,inner sep=1pt,gray,fill] at (\x,3*\x-4) {};}
\foreach \x in {3,4,5,6}{ \node[draw,circle,inner sep=1pt,gray,fill] at (\x,3*\x-5) {};}
\foreach \x in {4,5,6}{ \node[draw,circle,inner sep=1pt,gray,fill] at (\x,3*\x-6) {};}
\foreach \x in {6,7}{ \node[draw,circle,inner sep=1pt,gray,fill] at (\x,3) {};}
\foreach \x in {6}{ \node[draw,circle,inner sep=1pt,gray,fill] at (\x,2) {};}
\end{tikzpicture}
\hspace{1cm}
 \begin{tikzpicture}[scale=0.25]
  \filldraw[fill=gray!30,draw=white] (10,1) -- (5,1) -- (6,4) --(2,4) -- (5.3,14) --(10,14) -- (10,1) -- cycle; 
\pgfsetfillpattern{vertical lines}{gray}
  \filldraw[%fill=gray!20,
draw=white] (10,6) -- (10,1) -- (5,1) -- (6,4) -- (2,4) -- (5.3,14)-- (6.67,14)--(4,6)--(10,6)--cycle;
\pgfsetfillpattern{crosshatch}{black!60}
\filldraw[%fill=gray!60, 
draw=white] (10,6) -- (10,7) -- (9,7) -- (10,10) -- (6,10) -- (7.3,14) -- (6.6,14) --(4,6) -- (10,6) -- cycle;
\draw[black, thick] (0,0) -- (10,0);
\draw[gray] (0,0) -- (0,14.5);
\draw[black, thick] (0,0) -- (5,15);
\draw[gray] (2,4) -- (5.3,14);
\draw[gray] (2,4) -- (6,4);
\draw[gray] (5,1) -- (10,1);
\draw[gray] (5,1) -- (6,4);
\draw[gray] (4,6) -- (10,6);
\draw[gray] (4,6) -- (6.6,14);
\draw[gray] (6,10)--(10,10);
\draw[gray] (6,10)--(7.3,14);
\draw[gray] (9,7) -- (10,10);
\draw[gray] (9,7) -- (10,7);
\foreach \x in {1,2,3,4}{ \node[draw,circle,inner sep=1pt,black,fill] at (\x,3*\x) {};}
\foreach \x in {1,2,3,4,5}{ \node[draw,circle,inner sep=1pt,black,fill] at (\x,3*\x-1) {};}
\foreach \x in {2,3,4,5}{ \node[draw,circle,inner sep=1pt,gray,fill] at (\x,3*\x-2) {};}
\foreach \x in {0,1,...,10}{ \node[draw,circle,inner sep=1pt,black,fill] at (\x,0) {};}
\foreach \x in {1,2,3,4}{ \node[draw,circle,inner sep=1pt,black,fill] at (\x,1) {};}
\foreach \x in {5,6,...,10}{ \node[draw,circle,inner sep=1pt,gray,fill] at (\x,1) {};}
\foreach \x in {2,3,4,5}{ \node[draw,circle,inner sep=1pt,black,fill] at (\x,2) {};}
\foreach \x in {6,7,...,10}{ \node[draw,circle,inner sep=1pt,gray,fill] at (\x,2) {};}
\foreach \x in {2,3,4,5}{ \node[draw,circle,inner sep=1pt,black,fill] at (\x,3) {};}
\foreach \x in {6,7,...,10}{ \node[draw,circle,inner sep=1pt,gray,fill] at (\x,3) {};}
\foreach \x in {3,4,...,6}{ \node[draw,circle,inner sep=1pt,gray,fill] at (\x,4) {};}
\foreach \x in {7,8,...,10}{ \node[draw,circle,inner sep=1pt,gray,fill] at (\x,4) {};}
\foreach \x in {3,4,...,10}{ \node[draw,circle,inner sep=1pt,gray,fill] at (\x,5) {};}
\foreach \x in {3,4,...,10}{ \node[draw,circle,inner sep=1pt,gray,fill] at (\x,6) {};}
\foreach \x in {4,5,...,10}{ \node[draw,circle,inner sep=1pt,gray,fill] at (\x,7) {};}
\foreach \x in {4,5,...,10}{ \node[draw,circle,inner sep=1pt,gray,fill] at (\x,8) {};}
\foreach \x in {4,5,...,10}{ \node[draw,circle,inner sep=1pt,gray,fill] at (\x,9) {};}
\foreach \x in {5,6,...,10}{ \node[draw,circle,inner sep=1pt,gray,fill] at (\x,10) {};}
\foreach \x in {5,6,...,10}{ \node[draw,circle,inner sep=1pt,gray,fill] at (\x,11) {};}
\foreach \x in {5,6,...,10}{ \node[draw,circle,inner sep=1pt,gray,fill] at (\x,12) {};}
\foreach \x in {6,7,...,10}{ \node[draw,circle,inner sep=1pt,gray,fill] at (\x,13) {};}
\foreach \x in {6,7,...,10}{ \node[draw,circle,inner sep=1pt,gray,fill] at (\x,14) {};}
\foreach \x in {3,4,5}{ \node[draw,circle,inner sep=1pt,gray,fill] at (\x,3*\x-3) {};}
\foreach \x in {3,4,5,6}{ \node[draw,circle,inner sep=1pt,gray,fill] at (\x,3*\x-4) {};}
\foreach \x in {3,4,5,6}{ \node[draw,circle,inner sep=1pt,gray,fill] at (\x,3*\x-5) {};}
\foreach \x in {4,5,6}{ \node[draw,circle,inner sep=1pt,gray,fill] at (\x,3*\x-6) {};}
\foreach \x in {5,6,7}{ \node[draw,circle,inner sep=1pt,gray,fill] at (\x,3*\x-7) {};}
\foreach \x in {5,6,7,8}{ \node[draw,circle,inner sep=1pt,gray,fill] at (\x,7) {};}
\foreach \x in {6,7,8,9}{ \node[draw,circle,inner sep=1pt,gray,fill] at (\x,8) {};}
\foreach \x in {6,7,8,9}{ \node[draw,circle,inner sep=1pt,gray,fill] at (\x,9) {};}
\foreach \x in {6,7}{ \node[draw,circle,inner sep=1pt,gray,fill] at (\x,3) {};}
\foreach \x in {6}{ \node[draw,circle,inner sep=1pt,gray,fill] at (\x,2) {};}
\end{tikzpicture}
\hspace{1cm}
\begin{tikzpicture}[scale=0.25]
   \filldraw[fill=gray!30,draw=white] (10,1) -- (5,1) -- (6,4) --(2,4) -- (5.3,14) --(10,14) -- (10,1) -- cycle; 
\pgfsetfillpattern{vertical lines}{gray}
  \filldraw[%fill=gray!20,
draw=white] (10,8) -- (10,1) -- (5,1) -- (6,4) -- (2,4) -- (3.3,8)--(10,8)--cycle;
\pgfsetfillpattern{crosshatch}{black!60}
\filldraw[%fill=gray!60, 
draw=white] (10,8) -- (10,9) -- (8,9) -- (9,12) -- (5,12) -- (5.67,14) --(5.3,14) --(3.3,8)-- (10,8) -- cycle;
\draw[black, thick] (0,0) -- (10,0);
\draw[gray] (0,0) -- (0,14);
\draw[black, thick] (0,0) -- (5,15);
\draw[gray] (2,4) -- (5.3,14);
\draw[gray] (2,4) -- (3.3,8);
\draw[gray] (2,4) -- (6,4);
\draw[gray] (3.3,8) -- (5.3 ,14);
\draw[gray] (5,1) -- (10,1);
\draw[gray] (5,1) -- (6,4);
\draw[gray] (3,8) -- (5,14);
\draw[gray] (3,8) -- (10,8);
\draw[gray] (5,12)--(9,12);
\draw[gray] (5,12)--(5.67,14);
\draw[gray] (8,9) -- (9,12);
\draw[gray] (8,9) -- (10,9);
\foreach \x in {1,2,3,4}{ \node[draw,circle,inner sep=1pt,black,fill] at (\x,3*\x) {};}
\foreach \x in {1,2,3,4,5}{ \node[draw,circle,inner sep=1pt,black,fill] at (\x,3*\x-1) {};}
\foreach \x in {2,3,4,5}{ \node[draw,circle,inner sep=1pt,gray,fill] at (\x,3*\x-2) {};}
\foreach \x in {0,1,...,10}{ \node[draw,circle,inner sep=1pt,black,fill] at (\x,0) {};}
\foreach \x in {1,2,3,4}{ \node[draw,circle,inner sep=1pt,black,fill] at (\x,1) {};}
\foreach \x in {5,6,...,10}{ \node[draw,circle,inner sep=1pt,gray,fill] at (\x,1) {};}
\foreach \x in {2,3,4,5}{ \node[draw,circle,inner sep=1pt,black,fill] at (\x,2) {};}
\foreach \x in {6,7,...,10}{ \node[draw,circle,inner sep=1pt,gray,fill] at (\x,2) {};}
\foreach \x in {2,3,4,5}{ \node[draw,circle,inner sep=1pt,black,fill] at (\x,3) {};}
\foreach \x in {6,7,8,9,10}{ \node[draw,circle,inner sep=1pt,gray,fill] at (\x,3) {};}
\foreach \x in {2,3,...,6}{ \node[draw,circle,inner sep=1pt,gray,fill] at (\x,4) {};}
\foreach \x in {7,8,9,10}{ \node[draw,circle,inner sep=1pt,gray,fill] at (\x,4) {};}
\foreach \x in {3,4,...,10}{ \node[draw,circle,inner sep=1pt,gray,fill] at (\x,5) {};}
\foreach \x in {2}{ \node[draw,circle,inner sep=1pt,black,fill] at (\x,5) {};}
\foreach \x in {3,4,...,10}{ \node[draw,circle,inner sep=1pt,gray,fill] at (\x,6) {};}
\foreach \x in {4,5,...,10}{ \node[draw,circle,inner sep=1pt,gray,fill] at (\x,7) {};}
\foreach \x in {4,5,...,10}{ \node[draw,circle,inner sep=1pt,gray,fill] at (\x,8) {};}
\foreach \x in {4,5,...,7}{ \node[draw,circle,inner sep=1pt,gray,fill] at (\x,9) {};}
\foreach \x in {8,9,10}{ \node[draw,circle,inner sep=1pt,gray,fill] at (\x,9) {};}
\foreach \x in {5,6,...,8}{ \node[draw,circle,inner sep=1pt,gray,fill] at (\x,10) {};}
\foreach \x in {9,10}{ \node[draw,circle,inner sep=1pt,gray,fill] at (\x,10) {};}
\foreach \x in {5,6,7,8}{ \node[draw,circle,inner sep=1pt,gray,fill] at (\x,11) {};}
\foreach \x in {9,10}{ \node[draw,circle,inner sep=1pt,gray,fill] at (\x,11) {};}
\foreach \x in {5,6,...,10}{ \node[draw,circle,inner sep=1pt,gray,fill] at (\x,12) {};}
\foreach \x in {6,7,...,10}{ \node[draw,circle,inner sep=1pt,gray,fill] at (\x,13) {};}
\foreach \x in {6,7,...,10}{ \node[draw,circle,inner sep=1pt,gray,fill] at (\x,14) {};}
\end{tikzpicture}
\captionsetup{margin=0in,width=5in,font=small,justification=centering}
\caption{Left: $\bd=(-2,-1)$, Middle: $\bd=(-4,6)$, Right: $\bd=(-3,-8)$. }
\label{fig:RNC3dopsex234}
\end{figure}
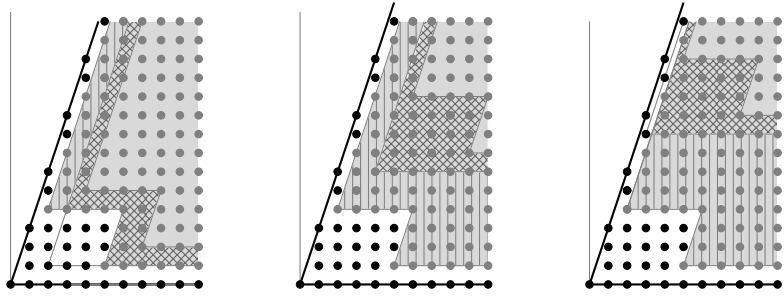

\end{xmp}

\begin{xmp} Let $R=\CC[x,y,z, xyz^{-1}]$.  Let 
\begin{align*}
    \tau_1 &= \RR_{\geq 0}(1,0,0) + \RR_{\geq 0}(0,0,1) & h_1(\theta) &= \theta_y\\
    \tau_2 &= \RR_{\geq 0}(0,0,1) + \RR_{\geq 0}(0,1,0) & h_2(\theta) &= \theta_x\\
    \tau_3 &= \RR_{\geq 0}(0,1,0) + \RR_{\geq 0}(1,1,-1) & h_3(\theta) &= \theta_x+\theta_z\\
    \tau_4 &= \RR_{\geq 0}(1,0,0) + \RR_{\geq 0}(1,1,-1) & h_4(\theta) &= \theta_y+\theta_z\\
\end{align*}
be the faces of the cone $\sigma^\vee$ of $R$.
Now let $\bd = (-2,0,-1)$, and let $I=(x^2,z,xy^2z^{-1})$.  We construct a differential operator of degree $\bd$ fixing $I$.

Now, we have that
\begin{align*} H_{\bd-(2,0,0)}(\theta) &= H_{(-4,0,-1)}\\
&=\prod_{i=1}^4(h_i(\theta),h_i(4,0,1)-1)!\\
&= (h_1(\theta),-1)!\cdot (h_2(\theta), 3)!\cdot (h_3(\theta),4)!\cdot (h_4(\theta),0)!\\
&=  (\theta_x,3)!(\theta_x+\theta_z, 4)!(\theta_y+\theta_z,0)!\\
H_{\bd-(0,0,1)}(\theta) &= H_{(-2,0,-2)}\\
&=\prod_{i=1}^4(h_i(\theta),h_i(2,0,2)-1)!\\
&= (h_1(\theta),-1)!\cdot (h_2(\theta), 1)!\cdot (h_3(\theta),3)!\cdot (h_4(\theta),1)!\\
&=  (\theta_x,1)!(\theta_x+\theta_z, 3)!(\theta_y+\theta_z,1)!\\
H_{\bd-(1,2,-1)}(\theta) &= H_{(-3,-2,0)}\\
&=\prod_{i=1}^4(h_i(\theta),h_i(3,2,0)-1)!\\
&= (h_1(\theta),1)!\cdot (h_2(\theta), 2)!\cdot (h_3(\theta),2)!\cdot (h_4(\theta),1)!\\
&=  (\theta_y,1)!(\theta_x,2)!(\theta_x+\theta_z, 2)!(\theta_y+\theta_z,1)!
\end{align*}
Therefore, a differential operator fixing $I$ is given by
$$\delta = x^{-2}z^{-1}\left(H_{(-4,0,-1)} + H_{(-2,0,-2)}+H_{(-3,-2,0)}\right)
$$
For example, we check that the generator $x^2$ of $I$ is in the image of $\delta$.  The preimage of the generator $x^2$ should be a multiple of $x^4z$.  We have that
$$\delta(x^4z) = x^{-2}z^{-1}\left(H_{(-4,0,-1)} + H_{(-2,0,-2)}+H_{(-3,-2,0)}\right)(x^4z) $$
$$= x^2 \left(H_{(-4,0,-1)}(4,0,1) + H_{(-2,0,-2)}(4,0,1)+H_{(-3,-2,0)}(4,0,1)\right)
$$
Now, we have that
\begin{align*}
    H_{(-4,0,-1)}(4,0,1) &= (4,3)!(5,4)!(1,0)! = 4\cdot 3\cdot 2\cdot 1\cdot 5\cdot 4\cdot 3\cdot 2\cdot 1\cdot 1 = 2880\\
    H_{(-2,0,-2)}(4,0,1) &= (4,1)!(5,3)!(1,1)! = 4 \cdot 3 \cdot 5\cdot 4\cdot 3\cdot 2\cdot 1\cdot 0 = 0\\
    H_{(-3,-2,0)}(4,0,1) &= (0,1)!(4,2)!(5,2)!(1,1)! = 0\cdot -1\cdot 4\cdot 3\cdot 2\cdot 5\cdot 4\cdot 3\cdot 1\cdot 0 = 0
\end{align*}
Therefore,
$$\delta(x^4z) = x^2\cdot (2880 + 0 + 0) = 2880x^2$$
Hence $x^2 = \delta\left(\frac{1}{2880} x^4z\right)$
\end{xmp}

\section{The Fixed Ideals of a Differential Operator}

We turn now to characterizing all ideals fixed by a given differential operator $\delta = x^{\bd}f(\theta)$. To this end, we are interested in the points in $V_{\mon}(f)$ which are furthest along rays in the direction of $-\bd$.

\begin{dff}
For every lattice point $\ba\in \ZZ^d$, set $$\val\ba := \inf\{t\in \RR\mid \ba + t\bd\in V_{\mon}(f)\}.$$  When $\val\ba$ is finite, set 
$$\pval\ba := \sup\{ t\in [\val \ba, \val \ba +1)\mid \ba + t\bd\in V_\mon(f)\}$$ and $\vpt\ba := \ba + \pval\ba \bd$.
\end{dff}

\noindent With the above notation, if $\val\ba$ is finite, then $\ba +\val\ba \bd$ is the point in $V_\mon(f)$ furthest along the ray through $\ba$ in the direction of $-\bd$.  Also in this case, $\vpt\ba$ is a point in $V_\mon(f)$ such that $\vpt\ba-n\bd\notin V_\mon(f)$ for any $n\geq 1$.  Note that if $\bd$ is primitive, then $\val\ba=\pval\ba$, since $\ba + t\bd$ will not be a lattice point if $t\notin \ZZ$. 

\

Suppose  $\delta = x^\bd f(\theta)$ is a differential operator on $R$ with $\b0\neq \bd\in -S$. We have that $\val\ba<\infty$ for any $\ba\in S$.  To see this, note if $\val\ba = \infty$, then $f(\ba+t\bd)\neq 0$ for all $t\in \RR$.  If we take $i$ such that $h_i(\bd)< 0$, then for large enough $m\in\NN$ we have that $h_i(\ba+m\bd) = h_i(\ba) + mh_i(\bd)<0$.  Therefore $\ba+m\bd\notin S$.  However, we also have that $x^{\ba+m\bd}$ is a nonzero multiple of $\delta^m(x^\ba)$ since $f(\ba + t\bd)\neq 0$ for all $t\in \RR$.  This is a contradiction since $\delta$ is an operator on $R$ and $x^{\ba +m\bd}\notin R$. 

\

We note a few more useful points about these functions.  If $\val \ba>0$, then $f(\ba)\neq 0$.  For any $u\in\RR$ we have that
\begin{align*} \val{\ba + u\bd} &=  \inf\{t\in \RR\mid \ba+u\bd + t\bd\in V_{\mon}(f)\}\\
&=\inf\{t+u\in \RR\mid \ba + (t+u)\bd\in V_{\mon}(f)\}-u\\
&= \val{\ba} - u.
\end{align*}
Furthermore, notice that if $\ba = \bb + u\bd$ for some $u\in \RR$, then $\vpt{\ba} = \vpt{\bb}$. By definition $\val \ba \leq \pval\ba <\val \ba +1$, which means that for any $\ba$, we have that
$$\val{\vpt\ba} = \val{\ba + \pval\ba\bd} = \val{\ba} -\pval\ba
$$
and so $\val{\vpt\ba}\in (-1,0]$. If $\be\in \ZZ^d$ is primitive such that $\bd=q\be$, then $\val \ba , \pval \ba\in \frac{1}{q}\ZZ$ for all $\ba\in \ZZ^d$.

\

In the next theorem, our aim is to ``cut out'' parts of the semigroup $S$ based on the values of the primitive support functions of the faces of $\sigma^\vee$.  We have that a lattice point $\ba\in \ZZ^d$ is in $S$ if and only if $h_i(\ba)\geq 0$ for all $i$. For any $\beta=(\beta_1,\ldots, \beta_n)\in \NN^n$, we consider the subset of the lattice points in $S$ whose values at the functions $h_i$ are at least the values $\beta_i$.  Let $W_\beta = \{\ba\in S\mid \textrm{for all } i, h_i(\ba)\geq \beta_i\}$.  Notice that the set $W_\beta$ is closed under the semigroup action, in other words $W_\beta$ is the exponent set of the ideal $I_\beta = (x^\ba\mid \ba\in W_\beta)$.

\

We also consider for a collection of vectors $\mathcal{B}\subseteq \NN^n$, $W_\mathcal{B} :=\bigcup_{\beta\in \mathcal{B}}W_\beta$.  Clearly $W_\mathcal{B}$ is also closed under the semigroup action. 

\begin{dff} For a vector $\bd\in -S$, we call a tuple $\beta\in\NN^n$ \emph{compatible with $\bd$} if for all $i$, either $\beta_i = 0$ or $h_i(\bd) = 0$.  

 A collection $\mathcal{B}\subseteq \NN^n$ is called \emph{compatible with $\bd$} if each $\beta\in \mathcal{B}$ is compatible with $\bd$.
\end{dff}

\noindent The tuples compatible with $\bd$ will be part of the data defining the fixed ideals of a differential operator $\delta$ of degree $\bd$.  Note that if $-\bd\in\inter \sigma^\vee$, then the only tuple compatible with $\bd$ is the zero tuple. 

\begin{lem} \label{lem:W_B is delta-compatible}If $\delta$ is a homogeneous operator of degree $\bd$, $\mathcal{B}\subseteq \NN^n$ is compatible with $\bd$, and $I=(x^{\ba}\mid \ba\in W_\mathcal{B})$, then $I$ is $\delta$-compatible.  In particular, if $\bb\in W_{\mathcal{B}}$ and $\val{\bb}$ is finite, then $\vpt\bb\in W_{\mathcal{B}}$.
\end{lem}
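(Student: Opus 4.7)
My plan for the two assertions is as follows. For part (1), I would apply Theorem~\ref{thm:deltacompatiblecriterion} directly: it suffices to check $f(\ba)=0$ whenever $\ba\in W_\mathcal{B}$ and $\ba+\bd\notin W_\mathcal{B}$. Fixing such $\ba$ and choosing $\beta\in\mathcal{B}$ with $\ba\in W_\beta$, I would split on whether $\ba+\bd\in S$. If $\ba+\bd\in S$, compatibility of $\beta$ with $\bd$ lets me check $\ba+\bd\in W_\beta$ coordinate by coordinate: when $h_i(\bd)=0$ we have $h_i(\ba+\bd)=h_i(\ba)\geq\beta_i$, and when $\beta_i=0$ we have $h_i(\ba+\bd)\geq 0=\beta_i$. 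This contradicts $\ba+\bd\notin W_\mathcal{B}$, so $\ba+\bd\notin S$. Then some $h_i(\ba+\bd)<0$, giving $0\leq h_i(\ba)<h_i(-\bd)$, so the factor $(h_i(\ba),h_i(-\bd)-1)!$ vanishes, $H_\bd(\ba)=0$, and hence $f(\ba)=0$ since $H_\bd\mid f$.

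For part (2), I would fix $\bb\in W_\mathcal{B}$ with $\val\bb$ finite, choose $\beta\in\mathcal{B}$ with $\bb\in W_\beta$, and set $\bc:=\vpt\bb=\bb+\pval\bb\bd$. The strategy is: find an integer $n\geq 1$ with $\bc-n\bd\in W_\beta$, and then observe that iterating $\delta$ from this anchor lands nontrivially on $x^\bc$, so that $\delta^n(I)\subseteq I$ forces $\bc\in W_\mathcal{B}$.

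The anchor step is where the compatibility hypothesis is essential. I would evaluate $h_i(\bc-n\bd)=h_i(\bc)-nh_i(\bd)$ case by case: if $h_i(\bd)=0$ this equals $h_i(\bc)=h_i(\bb)\geq\beta_i$ for every $n$; if instead $h_i(\bd)\neq 0$ then $\beta_i=0$ by compatibility and, since $-\bd\in S$ forces $h_i(\bd)<0$, the expression $h_i(\bc)-nh_i(\bd)$ grows without bound in $n$ and eventually exceeds $0=\beta_i$. Taking $n$ past the maximum of the finitely many such thresholds places $\bc-n\bd\in W_\beta\subseteq W_\mathcal{B}$.

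For the transport, the remarks preceding the lemma record $\vpt\bb-k\bd\notin V_\mon(f)$ for every $k\geq 1$, so $f(\bc-k\bd)\neq 0$ for $1\leq k\leq n$. Therefore the Saito--Traves formula gives $\delta^n(x^{\bc-n\bd})=\bigl(\prod_{k=1}^n f(\bc-k\bd)\bigr)x^{\bc}$ with nonzero coefficient, and part (1) yields $\delta^n(I)\subseteq I$, so $x^{\bc}\in I$ and $\vpt\bb\in W_\mathcal{B}$. The main obstacle I anticipate is the anchor step---producing $\bc-n\bd\in W_\beta$---because that is precisely the point at which the compatibility hypothesis on $\mathcal{B}$ enters; once the anchor is in hand the transport is a one-line consequence of Theorem~\ref{thm:ST}.
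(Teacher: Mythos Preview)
Your proof is correct. Part (1) is the paper's argument in contrapositive form: the paper assumes $\delta(x^\ba)\neq 0$, notes this forces $\ba+\bd\in S$ (since $\delta$ is an operator on $R$), and then uses compatibility to place $\ba+\bd\in W_\beta$; you instead assume $\ba+\bd\notin W_{\mathcal B}$, rule out $\ba+\bd\in S$ by the same compatibility argument, and then explicitly compute $H_\bd(\ba)=0$. Same content.

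Part (2) follows the same template---find an anchor $\vpt\bb-n\bd\in W_{\mathcal B}$, then transport via $\delta^n$ using $f(\vpt\bb-k\bd)\neq 0$ for $k\geq 1$---but chooses the anchor differently. The paper case-splits on the sign of $\pval\bb$: when $\pval\bb\leq 0$ the conclusion is immediate since $\vpt\bb\in\bb+S$, and when $\pval\bb>0$ it takes the specific value $n=\lceil\pval\bb\rceil$ and observes that $\vpt\bb-n\bd=\bb+(\pval\bb-\lceil\pval\bb\rceil)\bd\in\bb+S\subseteq W_{\mathcal B}$, so closure of $W_{\mathcal B}$ under $S$ does all the work without rechecking the $h_i$. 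Your unified large-$n$ argument avoids the case split at the cost of reinvoking the compatibility hypothesis to verify each inequality $h_i(\vpt\bb-n\bd)\geq\beta_i$ by hand. The paper's route is a touch slicker; yours is more uniform and equally valid.
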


\begin{proof} If $x^\ba\in I$, then $\ba\in \Exp I = W_{\mathcal{B}}$.  Therefore there exists $\beta\in \mathcal{B}$ such that for all $i$, $h_i(\ba)\geq \beta_i$.  If $\delta(x^{\ba})\neq 0$, then $\ba+\bd\in S$.  For every $i$, if $h_i(\bd)\neq 0$, then $\beta_i=0$, and so $h_i(\ba+\bd)\geq \beta_i$.  If $h_i(\bd)=0$, then $h_i(\ba+\bd) = h_i(\ba) \geq \beta_i$.  Thus in either case $\ba+\bd\in W_\beta\subseteq W_{\mathcal{B}} =\Exp I$.  Therefore $\delta(x^\ba)\in I$, and so $I$ is $\delta$-compatible.

For the last statement, if $\pval\bb \leq 0$, then $\vpt\bb = \bb  + (-\pval\bb)(-\bd)\in \bb + S\subseteq W_\mathcal{B}$.  If $\pval\bb >0$, then since $\val{\vpt\bb} >-1$, we have that $f(\vpt\bb -m\bd)\neq 0$ for all $m\geq 1$, and so $x^{\vpt\bb}$ is a nonzero multiple of $\delta^{m}(x^{\vpt\bb - m\bd})$ for all $m\geq 1$.  Taking $m=\lceil\pval\bb\rceil$, and noting that 
$$\vpt \bb - \lceil \pval\bb\rceil \bd = \bb +\pval\bb \bd  - \lceil \pval\bb\rceil \bd = \bb + (\pval\bb -  \lceil \pval\bb\rceil)(-\bd)\in W_{\mathcal{B}}$$
we have that $x^{\vpt\bb}$ is a nonzero multiple of an element of $I$.  Since $I$ is $\delta$-compatible, $x^{\vpt\bb}\in I$, and so $\vpt\bb\in \Exp I = W_{\mathcal{B}}$.
\end{proof}

We now prove necessary and sufficient conditions for a differential operator $\delta = x^\bd f(\theta)$ to fix a monomial ideal. This is described using the following subset of $V_\mon (f)$: 
$$V_{\mon}'(f) := \{\vpt\ba \mid \ba\in \ZZ^d, \val\ba\text{ finite}\}.$$

\

\noindent {\bf Notation:} We also make the assumption for the rest of the section that $\bd\neq \mathbf{0}$ with $\bd=q\be$ with $\be$ primitive.

\begin{thm} If there exists a $\bd$-compatible subset $\mathcal{B}\subseteq \NN^n$ such that
\begin{enumerate}
    \item for all $\ba\in W_\mathcal{B}$, $\val{\ba}>-\infty$ 
    \item for all $\ba\in V_{\mon}'(f)\cap W_{\mathcal{B}}$ and $i=0,\ldots, q-1$, $\ba - i\be \in V_\mon(f)$, and 
    \item for all $\ba,\bb\in V_{\mon}'(f)\cap W_{\mathcal{B}}$, $\ba-\bb\notin S-\be$, 
\end{enumerate} then $\delta$ fixes the ideal $I=(x^{\ba}\mid \ba\in V_{\mon}'(f)\cap W_{\mathcal{B}})$. Furthermore $\Exp I = \{\bb\in W_{\mathcal{B}}\mid \pval\bb\geq 0\}$.
\end{thm}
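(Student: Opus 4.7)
The plan is to first establish the set-theoretic description of $\Exp I$, and then invoke Theorem~\ref{thm:fixedcriteria} to conclude that $I$ is $\delta$-fixed. Note at the outset that since $-\bd = -q\be \in S$ and $\sigma^\vee$ is a rational polyhedral cone, $-\be = \frac{1}{q}(-q\be) \in \sigma^\vee \cap \ZZ^d = S$ as well (using that $\be$ is primitive integer). Also, for any $\ba \in W_\mathcal{B}$ one has $\val\ba$ finite (by (1) and the paragraph following the definition of $\val, \pval, \vpt$), so $\pval\ba$ and $\vpt\ba$ are well-defined.

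For the inclusion $\Exp I \supseteq \{\bb \in W_\mathcal{B} \mid \pval\bb \geq 0\}$: given such a $\bb$, Lemma~\ref{lem:W_B is delta-compatible} yields $\vpt\bb \in W_\mathcal{B}$, while $\vpt\bb \in V_\mon'(f)$ by construction, so $\vpt\bb$ is a generator of $I$. Writing $\pval\bb = k/q$ with $k \in \NN$, the identity $\bb = \vpt\bb - k\be = \vpt\bb + k(-\be)$ exhibits $\bb$ as $\vpt\bb$ plus $k(-\be) \in S$, so $\bb \in \Exp I$.

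The reverse inclusion is the main obstacle and is where condition (3) enters. Suppose $\bb \in \Exp I$, so $\bb = \bc + \mathbf{s}$ for some generator $\bc \in V_\mon'(f) \cap W_\mathcal{B}$ and some $\mathbf{s} \in S$. Assume for contradiction that $\pval\bb < 0$, and write $\pval\bb = -k'/q$ with $k' \geq 1$ an integer. Then $\vpt\bb = \bb - k'\be$, so $\vpt\bb - \bc = \mathbf{s} - k'\be$, and hence $\vpt\bb - \bc + \be = \mathbf{s} + (k'-1)(-\be) \in S$, which means $\vpt\bb - \bc \in S - \be$. Since $\vpt\bb \in W_\mathcal{B}$ by Lemma~\ref{lem:W_B is delta-compatible} and $\vpt\bb \in V_\mon'(f)$ by construction, both $\vpt\bb$ and $\bc$ lie in $V_\mon'(f) \cap W_\mathcal{B}$, contradicting condition (3). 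Hence $\pval\bb \geq 0$.

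To conclude via Theorem~\ref{thm:fixedcriteria}, we verify $\Exp I \cap V_\mon(f) = \Exp I \setminus (\Exp I - \bd)$. Fix $\bb \in \Exp I$. If $\bb + \bd \in \Exp I$, then $\pval\bb - 1 = \pval{\bb + \bd} \geq 0$ (the shift identity $\pval{\ba + u\bd} = \pval\ba - u$ is derived analogously to the one for $\val$ given in the preceding discussion), so $\pval\bb \geq 1$; combined with $\val\bb > \pval\bb - 1$, this forces $\val\bb > 0$ and hence $f(\bb) \neq 0$. If $\bb + \bd \notin \Exp I$, there are two subcases. When $\bb + \bd \notin S$, the fact that $\delta \in \cD(R)$ forces $f(\bb) = 0$ (else $\delta(x^\bb) = f(\bb) x^{\bb+\bd} \notin R$). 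When $\bb + \bd \in S$, $\bd$-compatibility of $\mathcal{B}$ ensures $\bb + \bd \in W_\mathcal{B}$ (for $\beta \in \mathcal{B}$ with $\bb \in W_\beta$, the coordinates with $\beta_i > 0$ satisfy $h_i(\bd) = 0$ so $h_i(\bb+\bd) = h_i(\bb) \geq \beta_i$, while the others give $h_i(\bb+\bd) \geq 0 = \beta_i$), so $\bb + \bd \notin \Exp I$ means $\pval\bb < 1$; writing $\pval\bb = k/q$ with $0 \leq k \leq q-1$, condition (2) applied to $\vpt\bb$ with index $i = k$ gives $\vpt\bb - k\be = \bb \in V_\mon(f)$, so $f(\bb) = 0$. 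This verifies the equality and completes the proof.
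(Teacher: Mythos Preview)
Your proof is correct and follows essentially the same strategy as the paper: first establish the description $\Exp I = \{\bb\in W_{\mathcal{B}}\mid \pval\bb\geq 0\}$ via Lemma~\ref{lem:W_B is delta-compatible} and condition~(3), then verify the criterion of Theorem~\ref{thm:fixedcriteria} using condition~(2). Your case split on whether $\bb+\bd\in S$ in the final step is slightly more explicit than the paper's version (which treats this implicitly), and you should state that $\bb=\bc+\mathbf{s}\in W_{\mathcal{B}}$ before invoking Lemma~\ref{lem:W_B is delta-compatible}, but these are cosmetic points.
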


\begin{proof} Let $I=(x^{\ba}\mid \ba\in V_{\mon}'(f)\cap W_{\mathcal{B}})$.  We prove the last statement first. Let $\bb\in \Exp I$ and $\ba\in V'_\mon(f)\cap W_{\mathcal{B}}$ such that $\bb\in \ba +S$. This implies that $\bb\in W_{\mathcal{B}}$.  By condition (1), $\vpt\bb$ exists and therefore $\vpt{\bb}\in W_{\mathcal{B}}\cap V'_\mon(f)$ by Lemma~\ref{lem:W_B is delta-compatible}.  If $\pval{\bb}<0$, then $\pval{\bb}\leq -\frac{1}{q}$, and so  $\pval\bb \bd +\be = (q\pval\bb +1)\be \in S$.  Therefore we have that
$$\vpt \bb - \ba  = \bb +\pval\bb \bd -\ba 
=(\bb - \ba) + (\pval \bb \bd +\be) - \be
\in S-\be, 
$$
a contradiction to condition (3).  Therefore $\pval\bb \geq 0$.  Hence $\Exp I\subseteq \{\bb\in W_{\mathcal{B}}\mid \pval \bb\geq 0\}$.

On the other hand, If $\bb\in W_\mathcal{B}$ and $\pval\bb\geq 0$, then $\vpt{\bb}\in W_\mathcal{B}\cap V'_{\mon}(f)\subseteq \Exp I$ and so $$\bb = \vpt\bb + (-\pval\bb)(-\bd)\in \vpt\bb + S\subseteq \Exp I.$$  Therefore $\{\bb\in W_{\mathcal{B}}\mid \pval\bb\geq 0\}\subseteq \Exp I$, and so we have equality.

\

Let $\bb\in \Exp I$.  If $\pval\bb\geq 1$, then $\val \bb>0$, and so $f(\bb)\neq 0$.  On the other hand, if $0\leq \pval\bb < 1$, then $\pval\bb = \frac{i}{q}$ for some $i\in\{0,\ldots, q-1\}$.  Therefore,
$$\bb = \vpt\bb - \pval\bb\bd = \vpt\ba - i\be\in V_\mon(f)
$$
by condition (2), and so $f(\bb)=0$.  Therefore $\bb\in V_{\mon}(f)$ if and only if $\pval\bb <1$, which occurs exactly when $\pval{\bb+\bd}<0$, that is, when $\bb+\bd\notin \Exp I$.  Therefore $\delta$ fixes $I$ by Theorem~\ref{thm:fixedcriteria}.
\end{proof}

\begin{thm} If the operator $\delta$ fixes a nonzero monomial ideal $I$, then there exists a $\bd$-compatible subset $\mathcal{B}\subseteq \NN^n$ such that
\begin{enumerate}
    \item for all $\ba\in W_\mathcal{B}$, $\val{\ba}>-\infty$
    \item  for all $\ba\in V_{\mon}'(f)\cap W_{\mathcal{B}}$ and $i=0,\ldots, q-1$, $\ba - i\be \in V_\mon(f)$,
    \item for all $\ba,\bb\in V_{\mon}'(f)\cap W_{\mathcal{B}}$, $\ba-\bb\notin S-\be$, and
    \item $I= (x^{\ba}\mid \ba\in V_{\mon}'(f)\cap W_{\mathcal{B}})$ and $\Exp I = \{\ba\in W_{\mathcal{B}}\mid \pval\ba\geq 0\}$
\end{enumerate}
\end{thm}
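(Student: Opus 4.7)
The plan is to construct $\mathcal{B}$ from the minimal generators of $I$ and verify the four conditions using Theorem~\ref{thm:fixedcriteria} and saturation of $S$: since $-\bd = q(-\be) \in S$, also $-\be \in S$. With $\ba_1,\ldots,\ba_m$ the minimal monomial generators of $I$, set
\[
\beta^{(j)}_i = \begin{cases} h_i(\ba_j) & \text{if } h_i(\bd) = 0,\\ 0 & \text{if } h_i(\bd) \neq 0, \end{cases}\qquad \mathcal{B} = \{\beta^{(j)}\}_{j=1}^m.
\]
Then $\mathcal{B}$ is $\bd$-compatible by construction, and any $\bc = \ba_j + \mathbf{s} \in \Exp I$ satisfies $h_i(\bc) \geq h_i(\ba_j) \geq \beta^{(j)}_i$, so $\Exp I \subseteq W_{\mathcal{B}}$.

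The central step toward (4) is $V'_\mon(f)\cap W_\mathcal{B} \subseteq \Exp I$. For $\bc$ in this intersection, $k^* := \min\{k \geq 0 \mid \bc + k(-\bd) \in \Exp I\}$ is finite since facets with $h_i(\bd) < 0$ drive $h_i(\bc + k(-\bd))$ upward. If $k^* \geq 1$, then Theorem~\ref{thm:fixedcriteria} at $\bc + k^*(-\bd)\in\Exp I$ forces it into $V_\mon(f)$, giving $\val\bc \leq -k^* \leq -1$ against $\val\bc\in(-1,0]$ (which follows from $\pval\bc = 0$). So $\bc\in\Exp I$. The reverse direction of (4) uses that each $\ba_j \in V_\mon(f)$ (minimality plus Theorem~\ref{thm:fixedcriteria} force $\ba_j + \bd \notin \Exp I$); a minimality-plus-saturation argument then establishes $\pval\ba_j = 0$: if $\pval\ba_j = k/q > 0$, then writing $\ba_j + k\be = \ba_{j'}+\mathbf{s}'$ puts $\ba_j \in \ba_{j'}+S$ via $-\be\in S$, forcing $j = j'$ and $\mathbf{s}' = k\be \notin S$, a contradiction. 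Hence each $\ba_j \in V'_\mon(f)\cap W_\mathcal{B}$, and the equality $\Exp I = \{\ba\in W_\mathcal{B}\mid \pval\ba \geq 0\}$ drops out. Condition (1) is similar: for $\ba \in W_\mathcal{B}$, once the $-\bd$-orbit enters $\Exp I$, $\delta$-fixedness and Lemma~\ref{lem:imagedelta} forbid $f$-vanishing deeper in the $-\bd$ direction.

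The main obstacle is conditions (2) and (3). One can reduce (2) to (3): if $\bc + k\be \in \Exp I$ for $\bc \in V'_\mon(f)\cap W_\mathcal{B}$ and $1 \leq k \leq q-1$, writing $\bc + k\be = \ba_{j'} + \mathbf{s}'$ gives $\bc - \ba_{j'} = (\mathbf{s}' + (k-1)(-\be)) - \be \in S - \be$, contradicting (3) applied to the pair $(\bc, \ba_{j'})$. For (3), suppose $\ba - \bb = \mathbf{s} - \be$ with $\mathbf{s}\in S$, so $\ba + \be = \bb + \mathbf{s}\in\Exp I$. If $\mathbf{s}\in\ZZ\be$, the constancy of $\vpt$ on $\be$-rays together with $\vpt\ba = \ba$ and $\vpt\bb = \bb$ forces $\ba = \bb$, whence $\mathbf{s} = \be \notin S$, a contradiction. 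Otherwise, case-split on $\val\ba$: when $\val\ba > -(q-1)/q$, $\pval\ba = 0$ forbids $\ba + \be \in V_\mon(f)$, so Theorem~\ref{thm:fixedcriteria} yields $\ba + (q+1)\be\in\Exp I$ and $j$-fold iteration drives $h_i(\ba + (jq+1)\be)$ negative for some facet with $h_i(\be)<0$, a contradiction; the boundary case $\val\ba = -(q-1)/q$ with $\ba + \be \in V_\mon(f)$ is the most delicate, and would be resolved by exploiting the $\be$-ray relation between $\bb$ and $\ba + \be$ through Theorem~\ref{thm:fixedcriteria}.
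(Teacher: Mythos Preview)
Your construction of $\mathcal{B}$ and the arguments for (1) and the containments in (4) are essentially the paper's. The difference is in (2) and (3): the paper proves (2) first by a direct argument and then derives (3) from (2); you reverse this order. Your reduction of (2) to (3) is correct, but your direct attack on (3) has two gaps.

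In the case $\val\ba > -(q-1)/q$, the ``$j$-fold iteration'' is not justified: from $\ba + (q+1)\be \in \Exp I$ you cannot conclude $\ba + (jq+1)\be \in \Exp I$ for all $j$ without knowing $\ba + (jq+1)\be \notin V_\mon(f)$ at each step, and nothing in your hypotheses gives that. The fix is much simpler than iteration: once $\ba + (q+1)\be \in \Exp I$, adding $-\be \in S$ gives $\ba + \bd = \ba + q\be \in \Exp I$, which contradicts $\ba \in \Exp I \cap V_\mon(f)$ via Theorem~\ref{thm:fixedcriteria}. No iteration is needed, and the $\mathbf{s}\in\ZZ\be$ case split is then also unnecessary.

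The boundary case $\val\ba = -(q-1)/q$ is not delicate at all; it is in fact immediate. That value of $\val\ba$ means $\ba - (q-1)\be \in V_\mon(f)$, and since $\ba \in \Exp I$ and $-\be \in S$ we have $\ba - (q-1)\be \in \Exp I \cap V_\mon(f)$, so Theorem~\ref{thm:fixedcriteria} forces $\ba - (q-1)\be + \bd = \ba + \be \notin \Exp I$. This directly contradicts $\ba + \be = \bb + \mathbf{s} \in \Exp I$; no ``$\be$-ray relation'' is needed. Once these two gaps are closed your route goes through, but the paper's order, proving (2) directly and getting (3) as a one-line consequence, avoids the case analysis entirely.
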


\begin{proof}  Suppose that $\delta$ fixes a monomial ideal $I=(x^{\bb_1},\ldots, x^{\bb_m})$.  For $i=1,\ldots, n$ and $j=1,\ldots, m$, let 
$\beta_{ij} = h_i(\bb_j)$ if $h_i(\bd)=0$ and $\beta_{ij}=0$ otherwise.  For $j=1,\ldots, m$, let
$\beta_j = (\beta_{1j},\beta_{2j},\ldots, \beta_{nj})\in \NN^n$, and then let $\mathcal{B} = \{\beta_1,\ldots, \beta_m\}$.  By construction, $\mathcal{B}$ is $\bd$-compatible.

(1) Let $\ba\in W_{\mathcal{B}}$, and choose $j$ such that $\ba\in W_{\beta_j}$.  Fix $i\in \{1,\ldots, n\}$.  If $h_i(\bd)<0$, then  for sufficiently large $s\in \NN$ we have that $h_i(\ba -s\bd - \bb_j) = h_i(\ba-\bb_j)-sh_i(\bd)\geq 0$.  If $h_i(\bd)=0$, then for all $s\in\NN$ we have that  $h_i(\ba -s\bd - \bb_j) = h_i(\ba)-h_i(\bb_j)\geq \beta_{ij}-\beta_{ij} =0$.  So, for some large $s$ we have that $\ba -s\bd -\bb_j\in S$, and so $\ba -s\bd\in \bb_j+S\subseteq \Exp I$.  Therefore, we have that for any $t\in\frac{1}{q}\ZZ$ with $t\leq -s$, $\ba +t\bd = \ba - s\bd + (t+s)\bd\in \Exp I$.  Since $I$ is $\delta$-fixed, this means that for all such $t$, $\ba + (t-1)\bd\notin V_\mon(f)$.  Therefore $\val\ba \geq -s-1>-\infty$.  

As a consequence to the last paragraph, if we take $\ba\in W_{\mathcal{B}}$ such that $\pval\ba = 0$, then the argument shows that there exists $s\in \NN$ such that $\ba - s\bd \in \Exp I$, and furthermore $f(\ba - t\bd)\neq 0$ for all $t\geq 1$.  Therefore, $x^{\ba}$ is a nonzero multiple of $\delta^s(x^{\ba-s\bd})\in \delta^s(I)=I$.  Therefore $V'_\mon(f)\cap W_{\mathcal{B}}\subseteq \Exp I$.

(2) Let $\ba\in V'_\mon(f)\cap W_{\mathcal{B}}\subseteq \Exp I$.  Suppose that for some $i\in \{0,\ldots, q-1\}$ we have that $\ba - i\be\notin V'_\mon(f)$.  Note this implies that $i\neq 0$.  Therefore $\delta(x^{\ba-i\be})\neq 0$, and so $\ba -i\be +\bd\in \Exp I$.  
Therefore, for all $j=i,i+1,\ldots, q-1$, we have that $\ba-j\be+\bd\in \Exp I$, and therefore $\ba-j\be\notin V'_\mon(f)$ since $I$ is $\delta$-fixed.  Hence $\ba-t\bd\notin V_\mon(f)$ for $t\geq \frac{i}{q}$, and so $\val\ba\geq -\frac{i-1}{q}$. Therefore, for any $0<t<1-\frac{i-1}{q} = \frac{q-i+1}{q}$, we must have that $\ba +t\bd\notin V_\mon(f)$ since otherwise $\pval\ba$ would be greater than 0.  Hence $\ba -i\be + \bd = \ba +\frac{q-i}{q}\bd\notin V_\mon(f)$, and so $\ba-i\be+2\bd\in \Exp I$.  However, this implies that $\ba+\bd\in \Exp I$, a contradiction since $\ba\in V_\mon(f)$.

(3) Fix $j\in \{0,\ldots, m\}$ and suppose that $\ba,\bb\in V'_\mon(f)\cap W_{\mathcal{B}}$, which implies that $\ba,\bb\in \Exp I$.  If $\ba-\bb\in S-\be$, then $\ba +\be\in \bb+S$, and so $\ba+\be\in \Exp I$.  However, by condition (2), $\ba - (q-1)\be\in V_\mon(f)$, and so $\ba + \be = \ba - (q-1)\be +\bd \notin \Exp I$, a contradiction.

(4) If $\ba\in \Exp I$, then there exists $\bb_j$ such that $\ba\in \bb_j+S$, and so $\ba\in W_{\beta_j}\subseteq W_{\mathcal{B}}$.  Furthermore, if $\pval\ba <0$, then $\vpt\ba\in V'_{\mon}\cap W_{\mathcal{B}}$ and $\ba -\lfloor \pval\ba\rfloor\bd  = \vpt\ba -i\be$ for some $i\in\{0,\ldots, q-1\}$, and so  $\ba -\lfloor \pval\ba\rfloor\bd\in V_\mon(f)$.  Therefore $x^\ba$ is not in the image of $\delta$, and so $\ba\notin \Exp I$, a contradiction.  Therefore $\Exp I\subseteq \{\ba\in W_{\mathcal{B}}\mid \pval{\ba}\geq 0\}$.  Hence,
$$\{\ba\in W_{\mathcal{B}}\mid \pval{\ba}\geq 0\}\subseteq (V'_\mon(f)\cap W_{\mathcal{B}})+S\subseteq \Exp I \subseteq\{\ba\in W_{\mathcal{B}}\mid \pval{\ba}\geq 0\} $$
and so we have equality throughout.
\end{proof}

\begin{cor}\label{cor:deltafix} Suppose $\bd\neq 0$. The operator $\delta$ fixes some nonzero monomial ideal if and only if there exists a $\bd$-compatible subset $\mathcal{B}\subseteq \NN^n$ such that
\begin{enumerate}
    \item For all $\ba\in W_\mathcal{B}$, $\val{\ba}>-\infty$
    \item  for all $\ba\in V_{\mon}'(f)\cap W_{\mathcal{B}}$ and $i=0,\ldots, q-1$, $\ba - i\be \in V_\mon(f)$,  and 
    \item for all $\ba,\bb\in V_{\mon}'(f)\cap W_{\mathcal{B}}$, $\ba-\bb\notin S-\be$.
\end{enumerate} In this case, $\delta$ fixes the ideal $I=(x^{\ba}\mid \ba\in V_{\mon}'(f)\cap W_{\mathcal{B}})$, and furthermore $\Exp I = \{\ba\in W_{\mathcal{B}}\mid \pval\ba\geq 0\}$.
\end{cor}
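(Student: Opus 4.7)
The plan is to observe that \cref{cor:deltafix} is essentially the combination of the two theorems immediately preceding it, so the proof amounts to packaging them together and invoking the corresponding characterization of the exponent set.

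For the forward direction, I would assume $\delta$ fixes some nonzero monomial ideal $I$ and apply the second theorem of the section directly. This theorem produces a $\bd$-compatible $\mathcal{B}\subseteq \NN^n$ satisfying (1), (2), (3), and moreover shows $I = (x^\ba\mid \ba \in V_{\mon}'(f)\cap W_{\mathcal{B}})$ with $\Exp I = \{\ba\in W_{\mathcal{B}}\mid \pval\ba\geq 0\}$. So the existence of $\mathcal{B}$ with the three listed properties is immediate, as is the description of $I$ and its exponent set.

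For the reverse direction, given a $\bd$-compatible $\mathcal{B}$ satisfying (1), (2), (3), I would apply the first theorem of the section: it immediately tells us that $I:=(x^{\ba}\mid \ba\in V_{\mon}'(f)\cap W_{\mathcal{B}})$ is $\delta$-fixed and has $\Exp I = \{\ba\in W_{\mathcal{B}}\mid \pval\ba\geq 0\}$. The only thing to check is that this ideal $I$ is actually nonzero, which will follow once we verify that $V_{\mon}'(f)\cap W_{\mathcal{B}}$ is nonempty. Since $\mathcal{B}\subseteq \NN^n$ is assumed nonempty (otherwise $W_{\mathcal{B}} = \varnothing$ and $I=(0)$ would not be a "nonzero monomial ideal" in the sense of the corollary), we may pick $\bb\in W_{\mathcal{B}}$; by (1), $\val\bb$ is finite, so $\vpt\bb$ is defined, and by \cref{lem:W_B is delta-compatible}, $\vpt\bb \in W_{\mathcal{B}}$, giving $\vpt\bb \in V_{\mon}'(f)\cap W_{\mathcal{B}}$.

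The only mild subtlety, and the step I would treat most carefully, is bookkeeping around the trivial case $\mathcal{B} = \varnothing$ (or $W_{\mathcal{B}} = \varnothing$), since then the conditions (1)--(3) are vacuous but the ideal produced is the zero ideal. This should be handled either by restricting attention to nonempty $\mathcal{B}$ with $W_{\mathcal{B}}\neq \varnothing$, or by remarking explicitly that "fixes some nonzero monomial ideal" excludes this degenerate situation. Beyond that, the proof is just a direct citation of the previous two theorems.
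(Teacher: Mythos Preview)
Your proposal is correct and matches the paper's treatment: the paper states \cref{cor:deltafix} without proof, leaving it as the evident combination of the two preceding theorems, which is exactly what you do. Your additional care about the nonzero condition (verifying $V_{\mon}'(f)\cap W_{\mathcal{B}}\neq\varnothing$ via \cref{lem:W_B is delta-compatible}, and flagging the degenerate case $\mathcal{B}=\varnothing$) is a detail the paper glosses over but that you handle correctly.
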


\begin{cor}\label{cor:interiordeltafix} Suppose $-\bd\in \inter\sigma^\vee$. The operator $\delta$ fixes some nonzero monomial ideal if and only if 
\begin{enumerate}
    \item For all $\ba\in S$, $\val{\ba}>-\infty$
    \item  for all $\ba\in V_{\mon}'(f)$ and $i=0,\ldots, q-1$, $\ba - i\be \in V_\mon(f)$,  and 
    \item for all $\ba,\bb\in V_{\mon}'(f)$, $\ba-\bb\notin S-\be$.  
\end{enumerate} In this case, $\delta$ fixes only the ideal $I=(x^{\ba}\mid \ba\in V_{\mon}'(f))$, and furthermore $\Exp I = \{\ba\in S\mid \pval\ba\geq 0\}$.
\end{cor}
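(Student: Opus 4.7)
The plan is to derive \Cref{cor:interiordeltafix} as a direct specialization of \Cref{cor:deltafix}. The key observation is that when $-\bd$ lies in the interior of $\sigma^\vee$, the collection of $\bd$-compatible tuples collapses to the single tuple $\mathbf{0}$, so the extra parameter $\mathcal{B}$ in \Cref{cor:deltafix} disappears and $W_\mathcal{B}$ becomes all of $S$.

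First, I would argue that $-\bd \in \inter\sigma^\vee$ forces $h_i(-\bd) > 0$ for every primitive support function $h_i$ of $\sigma^\vee$, equivalently $h_i(\bd) < 0$ for all $i$. Consequently, for any $\beta \in \NN^n$ that is $\bd$-compatible, each coordinate $\beta_i$ must vanish (since the other alternative, $h_i(\bd) = 0$, never occurs). Thus the only $\bd$-compatible element of $\NN^n$ is $\beta = \mathbf{0}$, and hence the only nonempty $\bd$-compatible subset is $\mathcal{B} = \{\mathbf{0}\}$. Since $W_{\mathbf{0}} = \{\ba \in S \mid h_i(\ba) \geq 0 \text{ for all } i\} = S$, we obtain $W_\mathcal{B} = S$.

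With this identification in hand, the three conditions of \Cref{cor:deltafix} become exactly conditions (1)--(3) of \Cref{cor:interiordeltafix}: $W_\mathcal{B}$ is replaced by $S$ in condition (1), and the intersections $V'_\mon(f) \cap W_\mathcal{B}$ in conditions (2) and (3) simplify to $V'_\mon(f)$ since by definition $V'_\mon(f) \subseteq S$ is not automatic, but the condition is trivially equivalent to the statement restricted to those $\ba$ that happen to lie in $S$, which is precisely what conditions (2) and (3) of the corollary assert. The ``only if'' direction and the ``if'' direction then both follow by the corresponding directions of \Cref{cor:deltafix}. The description $\Exp I = \{\ba \in S \mid \pval\ba \geq 0\}$ is obtained by substituting $W_\mathcal{B} = S$ into the final formula of \Cref{cor:deltafix}, and the ideal itself is $I = (x^\ba \mid \ba \in V'_\mon(f))$.

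Finally, for the uniqueness claim (``$\delta$ fixes only the ideal\ldots''), I would observe that \Cref{cor:deltafix} parametrizes the $\delta$-fixed ideals by $\bd$-compatible subsets $\mathcal{B}$. Since there is only one such subset giving a nonzero ideal, the fixed ideal is unique. There is essentially no obstacle here: the proof is a bookkeeping reduction, and the only thing to double-check carefully is the interior-cone computation showing that all $h_i(\bd)$ are strictly negative, which is immediate from the definition of the dual cone and the primitivity of the support functions.
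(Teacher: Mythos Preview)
Your proposal is correct and follows essentially the same approach as the paper: reduce to \Cref{cor:deltafix} by observing that $-\bd\in\inter\sigma^\vee$ forces $h_i(\bd)<0$ for all $i$, so the only $\bd$-compatible tuple is $\mathbf{0}$ and $W_{\{\mathbf{0}\}}=S$. The paper's proof is a two-line version of exactly this argument; your added detail on uniqueness (via the parametrization by $\bd$-compatible $\mathcal{B}$) is a welcome elaboration, and the slight muddle in your middle paragraph about $V'_{\mon}(f)$ versus $V'_{\mon}(f)\cap S$ reflects a minor imprecision in the corollary's statement rather than any defect in your reasoning.
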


\begin{proof} This corollary follows from Corollary~\ref{cor:deltafix} once we observe that if $-\bd\in\inter\sigma^\vee$, then $h_i(\bd)<0$ for all $i$.  Therefore the only $\bd$-compatible vector is $\b0\in\NN^n$, and $W_{\b0} = S$.
\end{proof}

\begin{cor} If $-\bd\in \inter\sigma^\vee$, then $\delta$ fixes at most one nonzero monomial ideal.
\end{cor}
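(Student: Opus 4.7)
The plan is to deduce this as an immediate consequence of Corollary~\ref{cor:interiordeltafix}. That corollary already establishes the ``if and only if'' criterion for $\delta = x^\bd f(\theta)$ to fix \emph{some} nonzero monomial ideal when $-\bd \in \inter \sigma^\vee$, and in that case pins down the fixed ideal explicitly.

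First, I would observe that the data $\delta$ determines the polynomial $f$ (up to the usual identifications), hence determines the set $V_\mon(f)$, the function $\pval{\Arg}$, and therefore the subset $V'_\mon(f) \subseteq V_\mon(f)$. None of these objects depend on any choice of fixed ideal.

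Next, assume $\delta$ fixes at least one nonzero monomial ideal (otherwise the statement is vacuous). Then by Corollary~\ref{cor:interiordeltafix}, \emph{every} nonzero monomial ideal fixed by $\delta$ must equal $I = (x^\ba \mid \ba \in V'_\mon(f))$, since the corollary asserts that under the hypothesis $-\bd \in \inter \sigma^\vee$ this is the unique ideal fixed. Equivalently, its exponent set must be $\{\ba \in S \mid \pval \ba \geq 0\}$, a set determined solely by $f$ and $\bd$. Therefore at most one nonzero monomial ideal can be $\delta$-fixed.

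There is no real obstacle here; the work has already been done in Corollary~\ref{cor:interiordeltafix}, where the crucial point was that the compatibility hypothesis $-\bd \in \inter \sigma^\vee$ forces the only $\bd$-compatible tuple in $\NN^n$ to be $\b0$ (because $h_i(\bd) < 0$ for every facet function $h_i$), collapsing the family $\{W_\mathcal{B}\}$ to the single set $W_{\b0} = S$. This removes the only degree of freedom in choosing the fixed ideal, so the corollary reduces to a citation of the preceding result.
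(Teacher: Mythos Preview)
Your proposal is correct and takes essentially the same approach as the paper, which gives no separate proof and treats the statement as an immediate consequence of Corollary~\ref{cor:interiordeltafix}. That corollary already asserts that, when $-\bd\in\inter\sigma^\vee$, $\delta$ fixes \emph{only} the ideal $I=(x^{\ba}\mid \ba\in V'_{\mon}(f))$, so uniqueness is built in; your explanation of why the only $\bd$-compatible tuple is $\b0$ simply recapitulates the proof of that corollary.
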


\begin{cor}Let $\mathcal{B}\subseteq \NN^n$ be a $\bd$-compatible subset such that $\delta$ fixes the ideal $I_{\mathcal{B}}=(x^\ba\mid \ba\in V'_\mon(f)\cap W_\mathcal{B})$.  Now for any $\bd$-compatible $\mathcal{B}'\subseteq \NN^n$ with $W_{\mathcal{B}'}\subseteq W_{\mathcal{B}}$, $I_{\mathcal{B}'}=(x^\ba\mid \ba\in V'_\mon(f)\cap W_{\mathcal{B}'})$ is $\delta$-fixed.
\end{cor}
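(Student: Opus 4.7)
The plan is to reduce the statement directly to Corollary~\ref{cor:deltafix}. Since $\delta$ fixes $I_{\mathcal{B}}$ by hypothesis, the forward direction of Corollary~\ref{cor:deltafix} tells us that $\mathcal{B}$ satisfies the three conditions (1)--(3) listed there. The goal is then to verify the same three conditions for $\mathcal{B}'$ and invoke the reverse direction of Corollary~\ref{cor:deltafix} to conclude that $\delta$ fixes $I_{\mathcal{B}'}$.

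The crucial observation is the hypothesis $W_{\mathcal{B}'}\subseteq W_{\mathcal{B}}$: any property quantified over points of $W_{\mathcal{B}}$ restricts automatically to $W_{\mathcal{B}'}$. Thus condition (1), which requires $\val{\ba}>-\infty$ for all $\ba\in W_{\mathcal{B}'}$, follows immediately from the corresponding property for $\mathcal{B}$. For conditions (2) and (3), I would first note the containment $V'_\mon(f)\cap W_{\mathcal{B}'}\subseteq V'_\mon(f)\cap W_{\mathcal{B}}$, which is again a consequence of $W_{\mathcal{B}'}\subseteq W_{\mathcal{B}}$. Condition (2) for $\mathcal{B}$ asserts $\ba - i\be\in V_\mon(f)$ for every $\ba$ in the larger intersection and $i=0,\ldots,q-1$, so in particular for every $\ba$ in the smaller intersection $V'_\mon(f)\cap W_{\mathcal{B}'}$. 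Likewise, condition (3) for $\mathcal{B}$ rules out $\ba-\bb\in S-\be$ for all $\ba,\bb\in V'_\mon(f)\cap W_{\mathcal{B}}$, and therefore a fortiori for $\ba,\bb$ in the subset $V'_\mon(f)\cap W_{\mathcal{B}'}$.

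With (1)--(3) established for $\mathcal{B}'$, Corollary~\ref{cor:deltafix} yields that $\delta$ fixes the ideal $I_{\mathcal{B}'}=(x^\ba\mid \ba\in V'_\mon(f)\cap W_{\mathcal{B}'})$, which is precisely the claim. I do not anticipate any real obstacle here: the argument is a formal monotonicity observation, and the only thing to be careful about is to use the existing $\bd$-compatibility of $\mathcal{B}'$ (given as part of the hypothesis) so that Corollary~\ref{cor:deltafix} applies without modification.
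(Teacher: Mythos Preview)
Your proof is correct and follows the same route as the paper: the paper's entire proof is the single sentence ``Clearly, we have that $W_{\mathcal{B}'}$ satisfies conditions (1)--(3) in Corollary~\ref{cor:deltafix},'' and you have spelled out precisely why this is clear, namely that each condition is monotone under passing to a subset $W_{\mathcal{B}'}\subseteq W_{\mathcal{B}}$. Your write-up is more detailed than the paper's but not different in substance.
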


\begin{proof}
Clearly, we have that $W_{\mathcal{B}'}$ satisfies conditions (1)-(3) in Corollary~\ref{cor:deltafix}.
\end{proof}

\begin{cor} If $-\bd\in S\setminus \inter\sigma^\vee$ and $\delta$ fixes at least one monomial ideal, then $\delta$ fixes infinitely many monomial ideals.
\end{cor}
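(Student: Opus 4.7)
The plan is to exhibit an explicit infinite family of distinct nonzero $\delta$-fixed ideals, obtained by intersecting the given fixed ideal $I$ with a one-parameter family of monomial ideals supported on a facet direction dual to $\bd$. Since $-\bd \in S$ but $-\bd \notin \inter \sigma^\vee$, some primitive support function $h_i$ vanishes on $\bd$: $h_i(\bd) = -h_i(-\bd) = 0$. For each $k \in \NN$, set $Q_k := (x^\ba \mid \ba \in S,\, h_i(\ba) \geq k)$. The claim is that every $I \cap Q_k$ is $\delta$-fixed and that infinitely many of these are distinct.

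To verify that $I \cap Q_k$ is $\delta$-fixed, the forward containment $\delta(I \cap Q_k) \subseteq I \cap Q_k$ is immediate, since $\delta(x^\ba) = f(\ba) x^{\ba + \bd}$ lies in $I$ by $\delta$-compatibility and in $Q_k$ because $h_i(\ba + \bd) = h_i(\ba) \geq k$. For the reverse, I will invoke $\delta(I) = I$: by Lemma~\ref{lem:imagedelta} this forces $f(\ba - \bd) \neq 0$ for every $\ba \in \Exp I$, while $\ba - \bd \in \Exp I$ follows from $-\bd \in S$ together with the $S$-invariance of $\Exp I$. When $\ba$ additionally lies in $\Exp Q_k$, the identity $h_i(\ba - \bd) = h_i(\ba) \geq k$ yields $\ba - \bd \in \Exp(I \cap Q_k)$, so $x^\ba = f(\ba - \bd)^{-1}\, \delta(x^{\ba - \bd}) \in \delta(I \cap Q_k)$.

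For distinctness, since $\tau_i$ is a proper facet of $\sigma^\vee$ the function $h_i$ is not identically zero on $S$, and I may pick $\bc \in S$ with $h_i(\bc) > 0$. Fixing any $\ba_0 \in \Exp I$ (possible because $I \neq 0$) produces elements $\ba_0 + N\bc \in \Exp I$ with $h_i(\ba_0 + N\bc) \to \infty$. Thus $\{h_i(\ba) \mid \ba \in \Exp I\}$ is an unbounded subset of $\ZZ_{\geq 0}$, so for infinitely many $k$ there is some $\ba \in \Exp I$ with $h_i(\ba) = k$; each such $\ba$ lies in $\Exp(I \cap Q_k) \setminus \Exp(I \cap Q_{k+1})$, producing the strict descent $I \cap Q_k \supsetneq I \cap Q_{k+1}$ and hence infinitely many distinct $\delta$-fixed ideals.

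The only subtle step is the reverse inclusion used to show that $I \cap Q_k$ is genuinely $\delta$-fixed rather than merely $\delta$-compatible; the $h_i$-invariance of $\delta$ that makes it work is precisely what the hypothesis $-\bd \in S \setminus \inter \sigma^\vee$ supplies, and without a facet direction orthogonal to $\bd$ no such refinement of $I$ would remain fixed.
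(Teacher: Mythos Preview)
Your proof is correct. The core idea matches the paper's: since $-\bd$ lies on the boundary of $\sigma^\vee$, some primitive support function $h_i$ satisfies $h_i(\bd)=0$, and one builds an infinite descending chain of $\delta$-fixed ideals by restricting along level sets $\{h_i\geq k\}$.

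The execution differs. The paper works through its $\bd$-compatible subset machinery: it represents the given fixed ideal as $I_{\mathcal{B}}$, modifies a tuple $\beta\in\mathcal{B}$ to $\beta_m=(\beta_1+m,\beta_2,\ldots,\beta_n)$, and invokes the preceding corollary (that $W_{\mathcal{B}'}\subseteq W_{\mathcal{B}}$ with $\mathcal{B}'$ $\bd$-compatible forces $I_{\mathcal{B}'}$ to be $\delta$-fixed). You instead intersect $I$ directly with $Q_k=(x^\ba\mid h_i(\ba)\geq k)$ and verify $\delta$-fixedness by hand from Lemma~\ref{lem:imagedelta} and the identity $h_i(\ba\pm\bd)=h_i(\ba)$. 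Your route is more self-contained and avoids the structural theorems of Section~4 entirely; the paper's route illustrates that those theorems already encode the result. One small phrasing point: the step ``$\{h_i(\ba)\mid\ba\in\Exp I\}$ is unbounded, so infinitely many $k$ are attained'' is fine because the set sits inside $\ZZ_{\geq 0}$, but you might state that explicitly rather than leaving it to the reader.
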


\begin{proof}  Let $\mathcal{B}\subseteq \NN^n$ be a $\bd$-compatible subset such that $\delta$ fixes the ideal $I_{\mathcal{B}}=(x^\ba\mid \ba\in V'_\mon(f)\cap W_\mathcal{B})$.  Let $\beta=(\beta_1,\beta_2,\ldots, \beta_n)\in \mathcal{B}$. Without loss of generality, suppose that $h_1(\bd)=0$.  Now for any $m\in\NN$, $\beta_m := (\beta_1+m,\beta_2,\ldots, \beta_n)$ is $\bd$-compatible and $W_{\beta_m}\subseteq W_{\mathcal{B}}$, so $I_m := (x^{\ba}\mid \ba\in V'_\mon(f)\cap W_{\beta_m})$ is $\delta$-fixed.

Each $I_m$ is nonzero.  Indeed, if we take any $\bb\in S\setminus \inter \sigma^\vee$, then $h_i(\bb)>0$ for all $i$, and so for sufficiently large $n\in\NN$ we have that $h_1(n\bb)\geq\beta_1 + m$ and $h_i(n\bb)\geq \beta_i$ for $i>1$.  Thus $n\bb\in W_{\beta_m}$, and so for any $\ba\in V'_\mon (f)$, we have that $x^{n\bb+\ba}\in I_m$.  However, we also have that $\bigcap_{m\in\NN} I_m = 0$, and so there are infinitely many distinct $I_m$ in the descending chain $I_1\supseteq I_2\supseteq \cdots$, each fixed by $\delta$.
\end{proof}

\section{Applications} 

We now review applications, which are all derived from Theorem~\ref{thm:polyhedra} and elementary combinatorial geometry. This is phrased in suitable abstraction to find interest beyond the applications we display. 

\

First, we review  a bit of the combinatorial geometry, for which one can see \cite{Grue03} for a deeper introduction. Among other things this will establish definitions of basic objects which sometimes vary slightly from source to source. By a {\it polyhedron} we mean a region of a Euclidean space of finite dimension defined as the intersection of finitely many half-spaces. This is most similar to $H$-polyhedra, as opposed to a $V$-polyehdra where one utilizes a definition in terms of vertices, in many cases these descriptions are equivalent. We avoid the term `polytope' which conflicts among standard sources.

\

Naturally, the $H$-description arises by preferring the {\it facets}, or maximal faces. We additionally find it convenient to use a related, slightly more flexible description as the model of computing or specifying a polyhedron $P$. We motivate this by the following natural set of data. Set $\cF$ the set of faces of $P$. Naturally associated to $\cF$ is a sequence $(g_{\tau},M_{\tau})_{\tau \in \cF}$ where $g_{\tau} \in  \ZZ[x_1,\ldots,x_d]$ is a linear form and $M_{\tau} \in \QQ$ with the property that $\bv \in P$ if and only if $g_{\tau}(\bv) \geq M_{\tau}$ for all $\tau \in \cF$. We have need to consider a slight generalization of this description to account for inefficiencies.

\begin{dff} Fix $P$ a polyhedron. 
A {\it face data} consists of a finite set $\mathcal{S}$ and a sequence $\{ (g_\tau, M_\tau)\}_{\tau \in \mathcal{S}}$ with $g_\tau \in \ZZ[x_1,\ldots,x_d]$ is a linear form, $M_\tau \in \QQ$ and a point $\bv \in P$ if and only if $g_{\tau}(\bv) \geq M_\tau$ for all $\tau$. 
\end{dff}

As described there is a canonical face data given the $H$-description of a polyhedron which motivates the name. However, as we have defined it there is not a unique face data describing a given polyhedron. In particular, a face data could have redundant information. Shortly, we will describe an algorithmic process for computing face data, but this will not be optimal in the sense that we do not guarantee it returns the canonical face data. However, in practice these redundancies cause no issue.  

\

By our definition, polyhedra are not necessarily bounded nor convex. However, at times we will prefer to work with convex polyhedra. Convex or not, a unifying property we insist holds identifies those polyhedra whose intersection with the lattice defining the fixed semigroup ring under consideration in an ideal.

\begin{dff}
A polyhedron $P$ is {\it $\sigma^{\vee}$-closed} proved $\mathbf{p} + \mathbf{a} \in P$ for all $\mathbf{p} \in P$ and $\mathbf{a} \in \sigma^{\vee}$.
\end{dff}

Notably, $\sigma^{\vee}$-closed polyhedra are never bounded. Any monomial ideal naturally has associated to it a canonical $\sigma^{\vee}$-closed polyhedron based on its exponent set. Also, by definition every $\sigma^{\vee}$-polyhedron defines an ideal. Obviously, this is not however a bijection as small perturbations in the faces of the polyhedron will often not change the associated ideal. Nonetheless, we refer to these structures as associated to each other. 

\

Now we come to the general form of our primary application which articulates lattice point membership in a union of polyhedra in terms of a differential operator fixing its associated ideal. 

\begin{thm}\label{thm:polyhedra} Let $P_1,\ldots, P_n$ be $\sigma^\vee$-closed polyhedra with face data $\{(g_\tau,M_\tau)\}_{\tau\in\mathcal{S}_i}$ for $i=1,\ldots, n$ and $\b0\neq \bd\in-S$.
\begin{enumerate}
    \item Let $I=\langle x^{\ba } \mid \ba \in \bigcup P_i \rangle$ and  $f=\sum_{i=1}^n\prod_{\tau\in\mathcal{S}_i}(g_\tau(\theta),\lceil M_\tau-g_\tau(\bd) -1\rceil)!$.  For any $h\in (H_\bd)\cap(f)$, $\delta = x^\bd h(\theta)$ fixes $I$, and $x^\ba\in I$ if and only if $\delta(x^{\ba-\bd})\neq 0$.
    \item Let $J=\langle x^{\ba} \mid \ba\in \bigcup\inter P_i \rangle$ and  $f=\sum_{i=1}^n\prod_{\tau\in\mathcal{S}_i}(g_\tau(\theta),\lfloor M_\tau-g_\tau(\bd)\rfloor)!$.  For any $h\in (H_\bd)\cap(f)$, $\delta = x^\bd h(\theta)$ fixes $J$, and $x^\ba\in J$ if and only if $\delta(x^{\ba-\bd})\neq 0$.
\end{enumerate} 
\end{thm}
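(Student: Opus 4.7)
The plan is to identify $V_\mon(f) \cap \Exp I$ explicitly from the face data and then invoke Theorem~\ref{thm:fixedcriteria} together with Lemma~\ref{lem:nonzero}. The first observation I would make is that the $\sigma^\vee$-closedness of each $P_i$ forces every linear form $g_\tau$ with $\tau \in \mathcal{S}_i$ to be nonnegative on $\sigma^\vee$: picking any $\ba \in P_i$ on the facet where $g_\tau(\ba) = M_\tau$ and any $\bv \in \sigma^\vee$, the inequality $g_\tau(\ba + \bv) \ge M_\tau$ forces $g_\tau(\bv) \ge 0$. Hence $g_\tau(\ba) \ge 0$ for all $\ba \in S$ and $g_\tau(\bd) \le 0$ because $-\bd \in \sigma^\vee$. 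The same $\sigma^\vee$-closedness gives $\Exp I = \left( \bigcup_i P_i \right) \cap \ZZ^d$, so $\ba \in \Exp I$ iff $g_\tau(\ba) \ge M_\tau$ for all $\tau \in \mathcal{S}_i$ for some $i$.

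The heart of the argument is the following computation. For $\ba \in S$, each factor $(g_\tau(\ba), \lceil M_\tau - g_\tau(\bd)\rceil - 1)!$ is a product of consecutive integers descending from the nonnegative integer $g_\tau(\ba)$, so it is a nonnegative integer that vanishes iff $g_\tau(\ba) \le \lceil M_\tau - g_\tau(\bd)\rceil - 1$. The integrality of $g_\tau(\ba)$ converts the complementary condition to $g_\tau(\ba) \ge M_\tau - g_\tau(\bd)$, i.e.\ $g_\tau(\ba + \bd) \ge M_\tau$. Since each $i$-product is nonnegative, summing over $i$ gives $f(\ba) \ne 0$ iff $\ba + \bd \in P_i$ for some $i$, which is the same as $\ba + \bd \in \Exp I$.

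Next I would verify the criterion of Theorem~\ref{thm:fixedcriteria} for $h$ the generator of the principal ideal $(H_\bd) \cap (f)$ in the UFD $\CC[x]$ (e.g.\ $h = H_\bd \cdot f$), whose monomial vanishing locus is $V_\mon(h) = V_\mon(H_\bd) \cup V_\mon(f)$. For $\ba \in \Exp I$, $h(\ba) = 0$ iff $H_\bd(\ba) = 0$ or $f(\ba) = 0$: by Lemma~\ref{lem:nonzero} the former holds iff $\ba + \bd \notin S$, and by the previous paragraph the latter iff $\ba + \bd \notin \Exp I$. Combining, $V_\mon(h) \cap \Exp I = \{\ba \in \Exp I \mid \ba + \bd \notin \Exp I\} = \Exp I \setminus (\Exp I - \bd)$. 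With $-\bd \in S$ by hypothesis, Theorem~\ref{thm:fixedcriteria} then yields $\delta(I) = I$. The membership iff follows from Lemma~\ref{lem:imagedelta}: $\delta(x^{\ba - \bd}) = h(\ba - \bd)\, x^\ba$ is nonzero iff $h(\ba - \bd) \ne 0$, which by the same analysis applied at $\ba - \bd$ is equivalent to $\ba \in \Exp I$.

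Part (2) follows the same template after replacing non-strict with strict inequalities. Because $g_\tau$ is nonnegative on $\sigma^\vee$, $\inter P_i$ is itself $\sigma^\vee$-closed, and $\Exp J = \bigcup_i (\inter P_i) \cap \ZZ^d$. The analog of the key computation shows that $(g_\tau(\ba), \lfloor M_\tau - g_\tau(\bd)\rfloor)!$ is nonzero iff the integer $g_\tau(\ba)$ strictly exceeds the rational $M_\tau - g_\tau(\bd)$, equivalently $g_\tau(\ba + \bd) > M_\tau$. The main obstacle I anticipate is the careful bookkeeping around ceilings, floors, and integrality to ensure these reductions are correct, together with the boundary case where $\ba + \bd \notin S$ that is handled by the vanishing of $H_\bd$ via Lemma~\ref{lem:nonzero}.
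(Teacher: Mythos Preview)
Your proof is correct and follows essentially the same route as the paper: establish nonnegativity of each $g_\tau$ on $S$, use this to show every summand of $f$ is a nonnegative integer on $S$ so that $f(\ba)\neq 0$ iff $\ba+\bd\in\bigcup_i P_i$, and conclude via Theorem~\ref{thm:fixedcriteria}. Your explicit separation of the $H_\bd$ and $f$ factors of $h$ via Lemma~\ref{lem:nonzero} is a clarification the paper leaves implicit; the only slip is assuming a point with $g_\tau(\ba)=M_\tau$ exists (a redundant face datum need not be attained), but the paper's limiting argument---take any $\bb\in P_i$ and let $m\to\infty$ in $g_\tau(\bb+m\bv)\ge M_\tau$---repairs this immediately.
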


\begin{proof} As the proof of both statements are similar, we only provide the proof of the first statement. We claim that $g_\tau(\ba)\geq 0$ for all $\tau\in \mathcal{F}$ and $\ba\in S$.  Let $\bb\in \Exp I$.  For any $m\in\NN$, $\bb + m\ba\in \Exp I\subseteq P$, and so $M_\tau\leq g_\tau(\bb + m\ba) = g_\tau(\bb) + m g_\tau(\ba)$.  Therefore $g_\tau(\ba)\geq (M_\tau - g_\tau(\bb))/m$ for all $m\in \NN$, and so $g_\tau(\ba)\geq 0$.

Therefore, for any face $\tau$, we have that $(g_\tau(\theta),\lceil M_\tau-g_\tau(\bd)\rceil -1)!(\ba)\geq 0$.  Hence $f(\ba)\neq 0$ if and only if there exists $1\leq i\leq n$ such that $(g_\tau(\ba),\lceil M_\tau-g_\tau(\bd)\rceil -1)!>0$ for all $\tau\in\mathcal{S}_i$.  This occurs exactly when $g_\tau(\ba)>\lceil M_\tau-g_\tau(\bd)\rceil-1$, which is equivalent to $g_\tau(\ba+\bd)\geq M_\tau$.  Thus, $f(\ba)\neq 0$ if and only if there exists $i$ such that $g_\tau(\ba+\bd)\in P_i$, i.e.\ $\ba+\bd\in \bigcup_i P_i = \Exp I$.  Therefore $V_\mon(f)\cap \Exp I = \Exp I \setminus (\Exp I-\bd)$, and so $\delta$ fixes $I$ by Theorem~\ref{thm:fixedcriteria}.  Also, we have that $f(\ba-\bd)=0$ for $\ba\notin \Exp I$, which proves the last statement as well.
\end{proof}

\begin{rmk} Theorem~\ref{thm:everyidealisdeltafixed} is a special case of Theorem~\ref{thm:polyhedra}.  This comes from the fact that every monomial ideal $I$ is of the form in the hypothesis of \ref{thm:polyhedra}.  Precisely, if $I=(x^{\ba_1},\ldots, x^{\ba_n})$, we may set $P_i = \ba_i + \sigma^\vee$ for each $i$.  Now each $P_i$ is a polyhedron, and $I=(x^\ba \mid \ba\in \bigcup_i P_i)$.  The improvement in Theorem~\ref{thm:polyhedra} is the ability to handle ideals which can be defined by polyhedra other than shifted cones, e.g.\  integrally closed ideals, symbolic powers, and multiplier ideals.  See, for instance, Theorem~\ref{thm:powerspolyhedra} below.
\end{rmk}

The power in the above theorem comes from the ability to compute face data of polyhedra after fundamental operations. Specifically, for $\bd$ a vector, $P_1$ and $P_2$ polyhedra, four of the fundamental operations are the following operations. 

\

\begin{center}
\begin{tabular}{cc}
    Translating & $P_1 + \bd$ \\
    Scaling & $cP_1$ \\
    Intersection & $P_1 \cap P_2$ \\
    Minkowski sum & $P_1 + P_2$.
\end{tabular}
\end{center}

\

Setting $I_1$ and $I_2$ the ideals associated to $P_1$ and $P_2$, the combinatorial geometric operations on polyhedra of course produce a polyhedron whose associated ideal arises from expected ideal constructions, specifically, the ideal associated to $P_1 + P_2$ is the ideal product $I_1 I_2$.

\begin{rmk}All fundamental operations described trivially preserve the $\sigma^{\vee}$-closed conditions.  

\

One should consider unions as a fundamental operation, but as defined the union of polyhedra may fail to be a polyhedra and taking the convex hull is a lossy process in that working this way would not allow us to distinguish between ideals and their integral closures. As such, the statement of Theorem~\ref{thm:polyhedra} assumes polyhedral input but works to describe the ideal associated to the union so this operation is not lost in our application. 
\end{rmk}

\

Note, in total generality, computing face data of the output of some of these fundamental operations from the face data of its inputs can be computationally difficult, notably NP-hard even in more restrictive settings than our general polyhedra \cite{Tiw08}. However, we quickly review some easy and rather explicit computations in some specialized settings sufficient for our applications.

\begin{lem}\label{lem:facedataop} Fix $P_1$ and $P_2$ $\sigma^{\vee}$-closed polyhedra with face data $\{(g_{\tau},M_{\tau})\}_{\tau \in \mathcal{S}_i}$ for $i = 1,2$, $\bd$ a vector, and $c>0$.
\begin{enumerate}
    \item \label{item:shifted polytope} $P_1+\bd$ is defined by face data $\{(g_\tau,M_\tau+g_\tau(\bd))\}_{\tau\in \mathcal{S}_1}$
    \item \label{item:scaled polytope} $cP_1$ is defined by face data $\{(g_\tau,cM_\tau)\}_{\tau\in \mathcal{S}_1}$
    \item \label{item:intersection of polytopes} $P_1\cap P_2$ is defined by face data $\{(g_{\tau},M_{\tau})\}_{\tau \in \mathcal{S}_1\cup\mathcal{S}_2}$
    \item \label{item:sum of polytopes} $P_1+P_2$ is defined by face data $\{(g_{\tau}, N_\tau)\}_{\tau\in\mathcal{S}_1\cup\mathcal{S}_2}$ for some $N_\tau\in \RR$.  In the two-dimensional case, for each $\tau\in \mathcal{S}_i$, we have that $N_\tau = M_\tau+g_\tau(\ba_\tau)$, where $\ba_\tau$ is a vertex of $P_{3-i}$ such that line through $a_\tau$ parallel to $\tau$ does not intersect the interior of $P_{3-i}$.
\end{enumerate}
\end{lem}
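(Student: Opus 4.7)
The plan is to verify each part separately, treating (1)--(3) as essentially routine and concentrating the argument on (4). For (1), a point $\bv$ lies in $P_1 + \bd$ if and only if $\bv - \bd \in P_1$, which by linearity of each $g_\tau$ is equivalent to $g_\tau(\bv) \geq M_\tau + g_\tau(\bd)$. For (2), since $c > 0$, $\bv \in cP_1$ iff $\bv/c \in P_1$, which by homogeneity of $g_\tau$ is equivalent to $g_\tau(\bv) \geq cM_\tau$. For (3), the definition of face data is visibly additive: $\bv \in P_1 \cap P_2$ iff $\bv$ simultaneously satisfies all constraints in $\mathcal{S}_1 \cup \mathcal{S}_2$.

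The substantive case is (4), for which the natural tool is the support function $h_P(g) := \inf_{\mathbf{p} \in P} g(\mathbf{p})$, which is additive under Minkowski sum: $h_{P_1 + P_2}(g) = h_{P_1}(g) + h_{P_2}(g)$. For each $\tau \in \mathcal{S}_1 \cup \mathcal{S}_2$ I would set $N_\tau := h_{P_1}(g_\tau) + h_{P_2}(g_\tau)$. This is finite because $\sigma^\vee$-closedness of each $P_i$ forces $g_\tau \geq 0$ on $\sigma^\vee$ by the same argument used in the proof of Theorem~\ref{thm:polyhedra}, so $g_\tau$ is bounded below on any $\sigma^\vee$-closed polyhedron. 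For $\tau \in \mathcal{S}_i$, the hypothesis that $(g_\tau, M_\tau)$ belongs to the face data of $P_i$ gives $h_{P_i}(g_\tau) = M_\tau$.

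With this setup, the containment $P_1 + P_2 \subseteq \bigcap_\tau \{g_\tau \geq N_\tau\}$ follows from linearity alone: if $\bv = \mathbf{p}_1 + \mathbf{p}_2$ with $\mathbf{p}_i \in P_i$, then $g_\tau(\bv) = g_\tau(\mathbf{p}_1) + g_\tau(\mathbf{p}_2) \geq h_{P_1}(g_\tau) + h_{P_2}(g_\tau) = N_\tau$. The reverse containment is the genuinely nontrivial direction and is where the two-dimensional hypothesis enters: in $\RR^2$ the normal fan of $P_1 + P_2$ is the common refinement of the normal fans of $P_1$ and $P_2$, so every facet normal of $P_1 + P_2$ already appears among $\{g_\tau\}_{\tau \in \mathcal{S}_1 \cup \mathcal{S}_2}$, and combined with the support-function identification of the bounds this yields equality. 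Finally, the explicit formula $N_\tau = M_\tau + g_\tau(\ba_\tau)$ for $\tau \in \mathcal{S}_i$ follows because in $\RR^2$ the infimum $h_{P_{3-i}}(g_\tau)$ is attained at a vertex $\ba_\tau$ of $P_{3-i}$ whose supporting line---the line through $\ba_\tau$ parallel to $\tau$---does not meet $\inter P_{3-i}$.

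The main obstacle is the reverse containment in (4): in general dimensions this requires a careful argument about refinements of normal fans, but the two-dimensional hypothesis makes this manageable by reducing to the classical normal-fan description of polygon Minkowski sums, thereby localizing the nontrivial convex-geometric content to that single step.
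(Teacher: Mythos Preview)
Your treatment of (1)--(3) is essentially identical to the paper's: both argue directly from the definition of face data via linearity and homogeneity of the $g_\tau$.

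For (4) you take a genuinely different, more self-contained route. The paper largely outsources the general claim to the computational-geometry literature (\cite[Ch.~13]{dBCvKO08}) and then, for the two-dimensional formula, argues that each face $\tau$ of $P_i$ contributes a translated face $\tau' = \tau + \ba_\tau$ of $P_1+P_2$, from which $g_\tau(\bv)\geq M_\tau + g_\tau(\ba_\tau)$ follows immediately. You instead work through support functions $h_P(g)=\inf_{\mathbf p\in P} g(\mathbf p)$, use their additivity under Minkowski sum to define $N_\tau$ and obtain one containment, and then invoke the normal-fan description of Minkowski sums for the reverse containment. Your approach is more conceptual and does not rely on an external reference; the paper's is more concise. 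Both are valid.

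One small imprecision: you assert $h_{P_i}(g_\tau)=M_\tau$ for $\tau\in\mathcal S_i$, but since the paper's notion of face data explicitly allows redundant constraints, one only has $h_{P_i}(g_\tau)\geq M_\tau$ in general. This does not break your argument---replacing $h_{P_i}(g_\tau)$ by the possibly smaller $M_\tau$ still yields a valid half-space containing $P_1+P_2$, and the non-redundant constraints among $\mathcal S_1\cup\mathcal S_2$ already cut out the sum---but the equality as stated needs the caveat that $\tau$ is an actual facet. Also, your normal-fan argument (that every facet normal of $P_1+P_2$ lies among the $g_\tau$) is in fact valid in all dimensions, not just two, so the general first sentence of (4) is covered by your method as well; the 2D hypothesis is only needed for the explicit vertex formula $N_\tau=M_\tau+g_\tau(\ba_\tau)$.
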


\begin{proof} (\ref{item:shifted polytope}) We have that $\bv\in P_1+\bd$ if and only if $\bv - \bd\in P_1$, which occurs exactly when $g_\tau(\bv-\bd)\geq M_\tau$ for all $\tau\in \mathcal{S}_1$.  Therefore $\bv\in P_1+\bd$ if and only if $g_\tau(\bv)\geq M_\tau+g_\tau(\bd)$ for all $\tau\in \mathcal{S}_1$.

(\ref{item:scaled polytope}) We have that $\bv\in cP_1$ if and only if $(1/c)\bv\in P_1$, which occurs exactly when $g_\tau((1/c)\bv)\geq M_\tau$ for all $\tau\in \mathcal{S}_1$.  Therefore $\bv\in cP_1$ if and only if $g_\tau(\bv)\geq cM_\tau$ for all $\tau\in \mathcal{S}_1$.

(\ref{item:intersection of polytopes}) We have that $\bv\in P_1\cap P_2$ if and only if $\bv$ is in both $P_1$ and $P_2$, i.e.\ if $g_\tau(\bv)\geq M_\tau$ for all $\tau\in \mathcal{S}_1\cup \mathcal{S}_2$.

(\ref{item:sum of polytopes}) The result is well-known, and algorithms for computing the numbers $N_\tau$ can be found in \cite[Ch. 13]{dBCvKO08}.  However, there are some technical adaptations to derive the stated claim. Notably, we are working with $\sigma^{\vee}$-closed regions which are not compact and also any two polyhedra have parallel faces. 

We note the selection criteria for which $\ba_\tau$ is chosen is simply done by linear programming. Thus, for each face $\tau\in \mathcal{S}_1$, there is a face $\tau'$ of $P_1+P_2$ which is defined by $\tau'=\tau + \ba_\tau$, where $\ba_\tau$ is a vertex of $P_2$ which minimizes $g_\tau$, and similarly for every $\tau\in \mathcal{S}_2$, there is a face $\tau'=\tau+\ba_\tau$, where $\ba_\tau$ is a vertex of $P_1$.  A vector $\bv$ is in the half-plane defined by $\tau'$ if and only if $\bv-\ba_\tau$ is in the half-plane defined by $\tau$, which occurs exactly when $g_\tau(\bv-\ba_\tau)\geq M_\tau$, implying $g_\tau(\bv) \geq M_\tau + g_\tau(\ba_\tau)$.  This proves the last statement of (\ref{item:sum of polytopes}).
\end{proof}

\begin{rmk}
The primary consequence of Lemma~\ref{lem:facedataop} is that combined with Theorem~\ref{thm:polyhedra} gives an algorithm for computing face data of many common polyhedra and the differential operators fixing the associated monomial ideals. 
\end{rmk}

\subsection{Symbolic powers and Integral closure of powers} We now turn out attention to sharper results in the case of ideal membership problems for the integral closure of powers \[\overline{I^n}=\{x \in R \mid \text{there exists } n \in \NN \text{ and }a_i \in I^{ni} \text{ for } 1 \leq i \leq n \text{ such that }  x^n+a_1x^{n-1}+\cdots +a_{n-1}x+a_n=0 \}\] and the symbolic power $I^{(n)} = \bigcap\limits_{\fp \in {\rm Ass}(I)} I^nR_{\fp} \cap R$. For monomial ideals $I$, there are important descriptions of these in terms of a polyhedra.

\begin{dff}
Let $R=\CC[S]$ be an affine semigroup ring defined by a cone $\sigma^{\vee}$ and $I \subseteq R$ a proper monomial ideal with primary decomposition $I= Q_1 \cap \cdots Q_r$.  For a prime $P\in {{\rm maxAss}(I)}$ set $Q_{\subseteq P}=IR_P \cap R$ (the intersection of the $Q_i \subseteq P$).
\begin{enumerate}
    \item The {\em{Newton polyhedron}} of $I$ is $\Newt{(I)}={\rm convex\  hull}\langle\Exp{(I)}\rangle$.
    \item  The {\em{symbolic polyhedron}} of $I$ is ${\rm SP}(I)= \bigcap\limits_{\fp \in {\rm maxAss}(I)} \Newt{(Q_{\subseteq P})}$.
\end{enumerate} 
\end{dff}

\begin{thm}\label{thm:powerspolyhedra}
Let $R=\CC[S]$ be an affine semigroup ring defined by a cone $\sigma^{\vee}$ and $I \subseteq R$ a proper monomial ideal.
\begin{enumerate}
    \item \cite[Proposition 2.7]{DFMS19} $x^{\bd} \in \overline{I^n}$ if and only if $\displaystyle\frac{\bd}{n} \in \Newt{(I)}$.
    \item \cite[Theorem 5.4]{CEHH17} If $x^{\bd} \in I^{(n)}$, then $\displaystyle\frac{\bd}{n} \in {\rm SP}{(I)}$.
    \item \cite[Proposition 2.10]{DFMS19} If we assume $S= \mathbb{N}^d$ for some $d$, i.e. $R$ is a polynomial ring and $I$ is a squarefree monomial ideal, then $x^{\bd} \in I^{(n)}$ if and only if $\displaystyle\frac{\bd}{n} \in {\rm SP}{(I)}$.
    \end{enumerate}
\end{thm}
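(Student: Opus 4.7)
The plan is to treat each of the three parts independently; each is a published result of others, so the primary task is to invoke the correct reference, but the underlying combinatorial principle is uniform and worth recording. In every case, one translates a membership problem in a product, symbolic power, or integral closure of a monomial ideal into a lattice-point condition in a rational polyhedron built from the primary/prime structure of $I$.

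For (1), the key input I would use is the classical identification $\Exp(\overline{J}) = \Newt(J) \cap S$ for any monomial ideal $J$ in a normal semigroup ring. Combined with the Minkowski-sum identity $\Newt(I^n) = n\cdot \Newt(I)$, which follows from $\Exp(I^n) = \Exp(I) + \cdots + \Exp(I)$ and the compatibility of convex hulls with sums, the two conditions $x^{\bd} \in \overline{I^n}$ and $\bd/n \in \Newt(I)$ become equivalent by a direct scaling argument on lattice points.

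For (2) and (3), I would start from the definition $I^{(n)} = \bigcap_{P \in \mathrm{maxAss}(I)} I^n R_P \cap R$. After localizing, only the primary components of $I$ contained in $P$ survive, so $I R_P \cap R = Q_{\subseteq P}$, and in general $I^n R_P \cap R \subseteq \overline{Q_{\subseteq P}^{\,n}}$. Applying (1) to $Q_{\subseteq P}$ shows that $x^{\bd} \in I^n R_P \cap R$ forces $\bd/n \in \Newt(Q_{\subseteq P})$; intersecting over all maximal associated primes proves the containment in (2). For (3), the squarefree/polynomial hypothesis promotes each $Q_{\subseteq P}$ to the prime $P$ itself, so $I^n R_P \cap R = P^n$ exactly with no integral-closure gap; for a monomial prime $P = (x_{i_1},\dots,x_{i_k})$, the condition $\bd/n \in \Newt(P)$ simplifies to $\sum_j d_{i_j} \geq n$, which is precisely membership of $x^{\bd}$ in $P^n$. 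Taking the intersection over all maximal associated primes yields the converse implication.

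The main obstacle, and the reason (2) is only an implication while (3) is an equivalence, is controlling the discrepancy between $I^n R_P \cap R$ and $Q_{\subseteq P}^{\,n}$: powers need not commute with primary decomposition once embedded primes or non-squarefree generators appear, and the containment there is generally proper up to integral closure. The squarefree/polynomial hypothesis in (3) collapses this gap, which is why the reverse direction can be obtained from the same lattice-point translation. Since all three parts are direct quotations from \cite{DFMS19} and \cite{CEHH17}, the actual proof reduces to invoking those references, and the sketch above is intended only to record the common geometric mechanism that the later applications in this paper will exploit.
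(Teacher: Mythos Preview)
The paper does not give a proof of this theorem at all: it is stated purely as a compilation of results from \cite{DFMS19} and \cite{CEHH17}, with only the citations serving as justification, followed by a remark discussing possible extensions to more general semigroups. Your proposal correctly identifies this and then goes further, supplying an informal sketch of the underlying mechanism. In that sense your treatment strictly exceeds what the paper provides, and the overall shape of your sketch---reducing (1) to the identity $\Exp(\overline{J})=\Newt(J)\cap S$ together with $\Newt(I^n)=n\,\Newt(I)$, and handling (2)--(3) by localizing at maximal associated primes and comparing with $\Newt(Q_{\subseteq P})$---is sound and matches the arguments in the cited sources.

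One point worth tightening in your sketch of (2): the containment $I^nR_P\cap R\subseteq \overline{Q_{\subseteq P}^{\,n}}$ is asserted without justification, and as stated it is not immediate. The cleaner route is to note that $I R_P = Q_{\subseteq P}R_P$, hence $I^n R_P\cap R = Q_{\subseteq P}^{\,n}R_P\cap R = (Q_{\subseteq P})^{(n)}$, and then use that symbolic powers of monomial ideals are contained in the integral closure of the ordinary power (or argue directly via the Newton polyhedron of $Q_{\subseteq P}$). This is a minor gap in an explanatory sketch of a cited result, not an error, but it is the step where the inequality in (2) actually originates.
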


\begin{rmk} Teissier notes in the exercises of \cite[Section 3]{Tei02} that the proof of (1) above also holds for any normal affine semigroup ring. For (2), the proof provided in \cite[Theorem 5.4]{CEHH17} goes through for normal affine semigroup rings. For (3), we believe that the generalization from $S= \NN^d$ to any strongly convex semigroup $S$ is probably true for ideals $I$ which are intersections of monomial primes. While we don't pursue this in full detail here, Example \ref{ex:sqfreegen} illustrates a normal affine semigroup ring and an ideal $I$ which is an intersection of all height 2 primes and it is clear that $n{\rm SP}(I)=\Exp{(I^{(n)})}.$ 
\end{rmk}

\begin{rmk}
Theorem \ref{thm:powerspolyhedra} (3) also holds for a wider class of ideals as noted in \cite[Remark 2.6]{HN21}, namely when $I$ is of linear-power type, specifically for those ideals  $I= \bigcap\limits_{\fp \in {\rm Min}(I)} {\fp}^{\omega_{\fp}}$ for some $\omega_{\fp} \in \NN$ and all minimal primes are monomial primes.
\end{rmk}

\begin{xmp}\label{ex:sqfreegen}
Let $R=\CC[x,y,z,xyz^{-1}]$ and \[I=(xy,xz,yz,x^2yz^{-1},xy^2z^{-1})=(x,y,z) \cap (y,z,xyz^{-1}) \cap (x,y,xyz^{-1}) \cap (x,z,xyz^{-1}).\] (The intersection of the height 2 primes of $R$; the equivalent of a square-free monomial ideal in a polynomial ring.) The facets of $R$ are 
\[\sigma_1=\{(x,y,z) \mid y=0\}, \sigma_2=\{(x,y,z) \mid y+z=0\}, \sigma_3=\{(x,y,z) \mid x+z=0\}, \text{ and } \sigma_4=\{(x,y,z) \mid x=0\},
\]
and the polyhedra defining the primes are 
\begin{align*}
    P_1&=\{(x,y,z) \mid x+y \geq 1, y \geq 0, y+z \geq 0, x+z \geq 0, x \geq 0 \},\\
    P_2&=\{(x,y,z) \mid 2y+z \geq 1, y \geq 0, y+z \geq 0, x+z \geq 0, x \geq 0\},\\
    P_3&=\{(x,y,z) \mid x+y+2z \geq 1, y \geq 0, y+z \geq 0, x+z \geq 0, x \geq 0\},\\
    P_4&=\{(x,y,z) \mid 2x+z \geq 1, y \geq 0, y+z \geq 0, x+z \geq 0, x \geq 0\}.\\
\end{align*}
The symbolic polyhedron \begin{align*}
    {\rm SP}(I)&= P_1 \cap P_2\cap P_3 \cap P_4\\
    &=\{(x,y,z) \mid x \geq 0, y \geq 0, x+z \geq 0, y+z \geq 0,x+y \geq 1, 2y+z \geq 1,x+y+2z \geq 1 \text{ and } 2x+z \geq 1\}\end{align*} is a polyhedron with vertex $(1/2,1/2,0)$.  It is easy to see that $I^{(2)}=(xy,x^2z^2,y^2z^2,x^4y^2z^{-2},x^2y^4,z^{-2})$ is the ideal generated by $2S(P)$ giving credence to the fact that Theorem \ref{thm:powerspolyhedra} (3) also holds for ideals which are intersections of homogeneous primes defined by the faces of a normal affine semigroup ring. For an example of a differential operator fixing $2{\rm SP}(I)=\Exp{(I^{(2)})}$, consider $\bd=(-1,-1,0)$; then $H_\bd=\theta_x\theta_y(\theta_x+\theta_z)(\theta_{y}+\theta_z)$ and \[f=\theta_x\theta_y(\theta_x+\theta_z)(\theta_{y}+\theta_z)(\theta_x+\theta_y,3)!(2\theta_x+\theta_z,3)!(2\theta_y+\theta_z,3)!(\theta_x+\theta_y+2\theta_z,3)!\] and $\delta=x^{-1}y^{-1} f$ fixes $I^{(2)}$.
    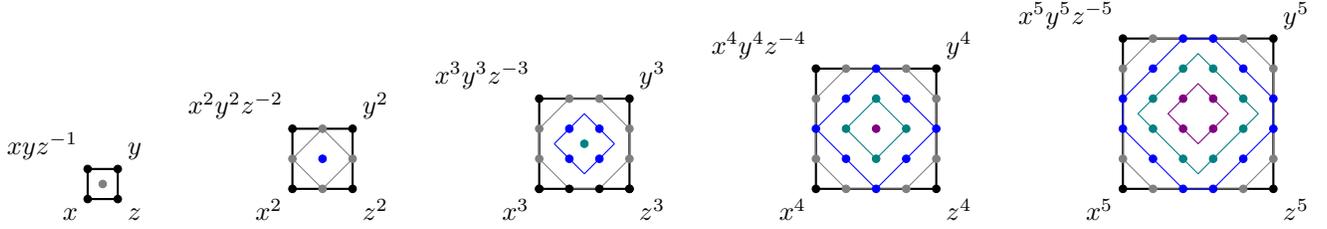
\begin{figure}[h]
  \centering
 \begin{tikzpicture}[scale=0.4]
\draw[black, thick] (0,0) node[below left]{$x$} -- (1,0) node[below right]{$z$};
\draw[black, thick] (0,0) -- (0,1);
\draw[black, thick] (1,1) -- (1,0);
\draw[black, thick] (1,1) node[above right]{$y$} -- (0,1) node[above left]{$xyz^{-1}$};
\foreach \x in {.5}{ \node[draw,circle,inner sep=1pt,gray,fill] at (\x,\x) {};}
\foreach \x in {0,1}{ \node[draw,circle,inner sep=1pt,black,fill] at (\x,1) {};}
\foreach \x in {0,1}{ \node[draw,circle,inner sep=1pt,black,fill] at (\x,0) {};}
\end{tikzpicture}
\hspace{.15cm}
 \begin{tikzpicture}[scale=0.4]
\draw[black, thick] (0,0) node[below left]{$x^2$} -- (2,0) node[below right]{$z^2$};
\draw[black, thick] (0,0) -- (0,2);
\draw[black, thick] (2,2) -- (2,0);
\draw[black, thick] (2,2) node[above right]{$y^2$} -- (0,2) node[above left]{$x^2y^2z^{-2}$};
\draw[gray] (0,1) --(1,2) --(2,1) -- (1,0) --(0,1);
\foreach \x in {0,2}{ \node[draw,circle,inner sep=1pt,gray,fill] at (\x,1) {};}
\foreach \x in {0,2}{ \node[draw,circle,inner sep=1pt,black,fill] at (\x,0) {};}
\foreach \x in {1}{ \node[draw,circle,inner sep=1pt,gray,fill] at (\x,0) {};}
\foreach \x in {0,2}{ \node[draw,circle,inner sep=1pt,black,fill] at (\x,2) {};}
\foreach \x in {1}{ \node[draw,circle,inner sep=1pt,gray,fill] at (\x,2) {};}
\foreach \x in {1}{ \node[draw,circle,inner sep=1pt,blue,fill] at (\x,1) {};}
\end{tikzpicture}
\hspace{.15cm}
 \begin{tikzpicture}[scale=0.4]
\draw[black, thick] (0,0) node[below left]{$x^3$} -- (3,0) node[below right]{$z^3$};
\draw[black, thick] (0,0) -- (0,3);
\draw[black, thick] (3,3) -- (3,0);
\draw[black, thick] (3,3) node[above right]{$y^3$} -- (0,3) node[above left]{$x^3y^3z^{-3}$};
\draw[gray] (0,2) --(1,3);
\draw[gray] (2,3) -- (3,2);
\draw[gray] (3,1) -- (2,0);
\draw[gray] (1,0) -- (0,1);
\draw[gray] (1,0) -- (2,0);
\draw[gray] (0,2) -- (0,1);
\draw[gray] (3,1) -- (3,2);
\draw[gray] (1,3) -- (2,3);
\draw[blue] (1.5,.5) -- (.5,1.5);
\draw[blue] (.5,1.5) -- (1.5,2.5);
\draw[blue] (1.5,2.5) -- (2.5,1.5);
\draw[blue] (2.5,1.5) -- (1.5,0.5);
\foreach \x in {0,1,2,3}{ \node[draw,circle,inner sep=1pt,black,fill] at (\x,0) {};}
\foreach \x in {0,3}{ \node[draw,circle,inner sep=1pt,black,fill] at (\x,0) {};}
\foreach \x in {1,2}{ \node[draw,circle,inner sep=1pt,gray,fill] at (\x,3) {};}
\foreach \x in {0,3}{ \node[draw,circle,inner sep=1pt,black,fill] at (\x,3) {};}
\foreach \x in {1,2}{ \node[draw,circle,inner sep=1pt,blue,fill] at (\x,1) {};}
\foreach \x in {0,3}{ \node[draw,circle,inner sep=1pt,gray,fill] at (\x,1) {};}
\foreach \x in {1,2}{ \node[draw,circle,inner sep=1pt,blue,fill] at (\x,2) {};}
\foreach \x in {0,3}{ \node[draw,circle,inner sep=1pt,gray,fill] at (\x,2) {};}
\foreach \x in {1.5}{ \node[draw,circle,inner sep=1pt,teal,fill] at (\x,\x) {};}
\end{tikzpicture}
\hspace{.15cm}
 \begin{tikzpicture}[scale=0.4]
\draw[black, thick] (0,0) node[below left]{$x^4$} -- (4,0) node[below right]{$z^4$};
\draw[black, thick] (0,0) -- (0,4);
\draw[black, thick] (4,4) -- (4,0);
\draw[black, thick] (4,4) node[above right]{$y^4$} -- (0,4) node[above left]{$x^4y^4z^{-4}$};
\draw[gray] (0,3) --(1,4) -- (3,4) -- (4,3) -- (4,1)  -- (3,0) --(1,0) -- (0,1) -- (0,3);
\draw[blue] (2,0) -- (0,2);
\draw[blue] (0,2) -- (2,4);
\draw[blue] (2,4) -- (4,2);
\draw[blue] (4,2) -- (2,0);
\draw[teal] (2,1) --(1,2)--(2,3) -- (3,2) -- (2,1);
\foreach \x in {0,4}{ \node[draw,circle,inner sep=1pt,black,fill] at (\x,0) {};}
\foreach \x in {1,3}{ \node[draw,circle,inner sep=1pt,gray,fill] at (\x,0) {};}
\foreach \x in {2}{ \node[draw,circle,inner sep=1pt,blue,fill] at (\x,0) {};}
\foreach \x in {0,4}{ \node[draw,circle,inner sep=1pt,gray,fill] at (\x,3) {};}
\foreach \x in {1,3}{ \node[draw,circle,inner sep=1pt,blue,fill] at (\x,3) {};}
\foreach \x in {2}{ \node[draw,circle,inner sep=1pt,teal,fill] at (\x,3) {};}
\foreach \x in {0,4}{ \node[draw,circle,inner sep=1pt,gray,fill] at (\x,1) {};}
\foreach \x in {1,3}{ \node[draw,circle,inner sep=1pt,blue,fill] at (\x,1) {};}
\foreach \x in {2}{ \node[draw,circle,inner sep=1pt,teal,fill] at (\x,1) {};}
\foreach \x in {0,4}{ \node[draw,circle,inner sep=1pt,blue,fill] at (\x,2) {};}
\foreach \x in {1,3}{ \node[draw,circle,inner sep=1pt,teal,fill] at (\x,2) {};}
\foreach \x in {2}{ \node[draw,circle,inner sep=1pt,violet,fill] at (\x,2) {};}
\foreach \x in {0,4}{ \node[draw,circle,inner sep=1pt,black,fill] at (\x,4) {};}
\foreach \x in {1,3}{ \node[draw,circle,inner sep=1pt,gray,fill] at (\x,4) {};}
\foreach \x in {2}{ \node[draw,circle,inner sep=1pt,blue,fill] at (\x,4) {};}
\end{tikzpicture}
\hspace{.15cm}
 \begin{tikzpicture}[scale=0.4]
\draw[black, thick] (0,0) node[below left]{$x^5$} -- (5,0) node[below right]{$z^5$};
\draw[black, thick] (0,0) -- (0,5);
\draw[black, thick] (5,5)-- (5,0);
\draw[black, thick] (5,5)node[above right]{$y^5$} -- (0,5) node[above left]{$x^5y^5z^{-5}$};
\draw[gray] (0,4) --(1,5) -- (4,5) -- (5,4) -- (5,1)  -- (4,0) --(1,0) -- (0,1) -- (0,4);
\draw[blue] (2,0) -- (0,2) -- (0,3) -- (2,5) -- (3,5) -- (5,3) -- (5,2) -- (3,0) -- (2,0);
\draw[teal] (2.5,.5) -- (.5,2.5) -- (2.5, 4.5) -- (4.5,2.5) -- (2.5, .5);
\draw[violet] (2.5,1.5) --(1.5,2.5) -- (2.5,3.5) -- (3.5,2.5) -- (2.5,1.5);
\foreach \x in {0,5}{ \node[draw,circle,inner sep=1pt,black,fill] at (\x,0) {};}
\foreach \x in {1,4}{ \node[draw,circle,inner sep=1pt,gray,fill] at (\x,0) {};}
\foreach \x in {2,3}{ \node[draw,circle,inner sep=1pt,blue,fill] at (\x,0) {};}
\foreach \x in {0,5}{ \node[draw,circle,inner sep=1pt,black,fill] at (\x,5) {};}
\foreach \x in {1,4}{ \node[draw,circle,inner sep=1pt,gray,fill] at (\x,5) {};}
\foreach \x in {2,3}{ \node[draw,circle,inner sep=1pt,blue,fill] at (\x,5) {};}
\foreach \x in {0,5}{ \node[draw,circle,inner sep=1pt,gray,fill] at (\x,4) {};}
\foreach \x in {0,5}{ \node[draw,circle,inner sep=1pt,gray,fill] at (\x,1) {};}
\foreach \x in {1,4}{ \node[draw,circle,inner sep=1pt,blue,fill] at (\x,4) {};}
\foreach \x in {1,4}{ \node[draw,circle,inner sep=1pt,blue,fill] at (\x,1) {};}
\foreach \x in {2,3}{ \node[draw,circle,inner sep=1pt,teal,fill] at (\x,4) {};}
\foreach \x in {2,3}{ \node[draw,circle,inner sep=1pt,teal,fill] at (\x,1) {};}
\foreach \x in {0,5}{ \node[draw,circle,inner sep=1pt,blue,fill] at (\x,2) {};}
\foreach \x in {0,5}{ \node[draw,circle,inner sep=1pt,blue,fill] at (\x,3) {};}
\foreach \x in {1,4}{ \node[draw,circle,inner sep=1pt,teal,fill] at (\x,2) {};}
\foreach \x in {1,4}{ \node[draw,circle,inner sep=1pt,teal,fill] at (\x,3) {};}
\foreach \x in {2,3}{ \node[draw,circle,inner sep=1pt,violet,fill] at (\x,2) {};}
\foreach \x in {2,3}{ \node[draw,circle,inner sep=1pt,violet,fill] at (\x,3) {};}
\end{tikzpicture}
\captionsetup{margin=0in,width=5.1in,font=small,justification=centering}
\caption{Cross-sections of multiples of the symbolic polyhedra of $I$}
\label{fig:SP}
 \end{figure}
    
    In Figure \ref{fig:SP}, we illustrate cross-sections of the lattice points of the cone defining $R$.  The lattice points in grey represent the monomials which are in $I \setminus I^{(2)}$; those in blue represent the monomials which are in $I^{(2)} \setminus I^{(3)}$; those in teal represent the monomials which are in $I^{(3)} \setminus I^{(4)}$; and those in violet represent the the monomials which are in  $I^{(4)}$.  We have included the vertices of the symbolic polyhedra ${\rm SP}(I)$ and $3{\rm SP}(I)$ in gray and teal respectively although they do not correspond to monomials.
\end{xmp}

\begin{xmp}
Let $R=\CC[x,y,z]$ and $I=(x^2z,xy^2,yz^2,xyz)=(x,z^2) \cap (x^2,y) \cap (y^2,z)$.  The minimal (and maximal) primes are $P_1=(x,z),P_2=(x,y)$ and $P_3=(y,z)$.  Set $Q_1=(x,z^2), Q_2=(x^2,y)$ and $Q_3=(y^2,z)$. Then $Q_{\subseteq P_i}=Q_i$ and the faces of the Newton Polyhedra of $Q_i$ are the following: 
\[P_1=\{(x,y,z) \mid 2x+z \geq 2, x \geq 0, y \geq 0 \text{ and } z \geq 0\},\] \[ P_2=\{(x,y,z) \mid 2y+x \geq 2, x \geq 0, y \geq 0 \text{ and } z \geq 0\},\] \[ P_3=\{(x,y,z) \mid 2y+z \geq 2, x \geq 0, y \geq 0 \text{ and } z \geq 0\}.
\]
Thus, \[{\rm SP}(I)= P_1 \cap P_2 \cap P_3
\] which is a polyhedral cone with vertex $(2/3,2/3,2/3)$ with facets \[\tau_1=\{(x,y,z) \mid 2x+z = 2\}, \tau_2=\{(x,y,z) \mid 2y+x = 2\}, \text{ and } \tau_3=\{(x,y,z) \mid 2z+y = 2\}.\] In \cite[Example 4.6]{CEHH17}, the authors note that $x^2y^2z^2 \in I^{(3)}$ and Theorem \ref{thm:powerspolyhedra} (2) in fact verifies that the vertex of the cone above lies in ${\rm SP}(I)$.  In fact, it is also easy to verity that $x^{2n}y^{2n}z^{2n} \in P^{(3n)}$. 

Applying Theorem \ref{thm:polyhedra}, we will exhibit a differential operator which fixes the ideal generated by ${\rm SP}(I)$.  Note that the facet data for ${\rm SP}(I)$ is $\{(2x+z,2), (2y+x,2), (2z+y,2)\}$.  Set $\bd=(-1,-1,-1)$ and note that $H_{\bd}=\theta_x\theta_y\theta_z$ and $f=(2\theta_x+\theta_z, 4)!(2\theta_y+\theta_x, 4)!(2\theta_z+\theta_y, 4)!$.  Since $H_{\bd}f\in (H_{\bd}) \cap (f)$, then $\delta=x^\bd H_{\bd}f$ fixes the ideal generated by ${\rm SP}(I)$. 
\end{xmp}

\begin{rmk}
In the above example, we can replace 2 by any natural number $n$ and the facet data for ${\rm SP}(I_n)$ for $I_n=(x^nz,xy^n,yz^n,xyz)=(x,z^n) \cap (x^n,y) \cap (y^n,z)$ will instead be $\{(nx+z,n),(ny+x,n),(nz+y,n)$ and ${\rm SP}(I)$ will be a cone with vertex $(\frac{n}{n+1},\frac{n}{n+1},\frac{n}{n+1})\}$.  It is interesting to note that as $n$ gets large the vertices of these cones are approaching $(1,1,1)$.  Of course, the ideals $I_n$ are just nilpotent thickenings of $I_1=(x,y) \cap (x,z) \cap (y,z)=(xy,xz,yz)$.  As $n$ gets large,  the symbolic polyhedron of these thickenings is approaching the cone of the ideal $J=(xyz)=(x) \cap (y) \cap (z)$, the intersection of height 1 primes.

Similarly, when $a<b$ and $I=(x^az^b,x^by^a,y^bz^a,x^ay^az^a)=(x^b,z^a) \cap (x^a,y^b) \cap (y^a,z^b)$, the facet data for  ${\rm SP}(I)$ will instead be $\{(ax+bz,ab),(ay+bx,ab),(az+by,ab)$ and ${\rm SP}(I)$ will be a cone with vertex $(\frac{ab}{a+b},\frac{ab}{a+b},\frac{ab}{a+b})\}$.  Again these ideals are thickenings of $J=(x^ay^a,x^az^a,y^a,z^a)$ and their symbolic polyhedra again are converging toward the cone of the ideal $(x^ay^az^a)$.
\end{rmk}

\begin{xmp}
The vertex of the cone need not be a multiple of $(1,1,1)$. Consider \[I=(x^ay^b,x^az^c,y^bz^c)=(x^a,z^c) \cap (x^a,y^b) \cap (y^b,z^c).\]  The polyhedra defining the primary components are
\[P_1=\{(x,y,z) \mid cx+az \geq ac, x \geq 0, y \geq 0, \text{ and } z \geq 0\},
\]
\[P_2=\{(x,y,z) \mid bx+ay \geq ab, x \geq 0, y \geq 0, \text{ and } z \geq 0\},
\]
\[P_3=\{(x,y,z) \mid cy+bz \geq bc, x \geq 0, y \geq 0, \text{ and } z \geq 0\}.
\]
Thus, \[{\rm SP}(I)= P_1 \cap P_2 \cap P_3
\] which is a polyhedral cone with vertex $(\frac{a}{2},\frac{b}{2},\frac{c}{2})$ with facets \[\tau_1=\{(x,y,z) \mid cx+az = ac\}, \tau_2=\{(x,y,z) \mid bx+ay =ab\}, \text{ and } \tau_3=\{(x,y,z) \mid cy+bz=bc\}.\]
It is easy to check that $x^ay^bz^c \in I^{(2)} \setminus I^2$ and $\frac{1}{2}(a,b,c) \in {\rm SP}(I)$ as in Theorem \ref{thm:powerspolyhedra}(b).

Applying Theorem \ref{thm:polyhedra}, we will exhibit a differential operator which fixes the ideal generated by $2{\rm SP}(I)$.  Note that this ideal contains $I^{(2)}$ but is not $I^{(2)}$ since $x^{2a-1}y^bz$ is one example of an element in $2{\rm SP}(I)\setminus \Exp{(I^{(2)})}$.
Note that the facet data for $2{\rm SP}(I)$ is $\{(cx+az,2ac), (bx+ay,2ab), (cy+bz,2bc)\}$.  Set $\bd=(-1,-1,-1)$ and note that $H_{\bd}=\theta_x\theta_y\theta_z$ and $f=(c\theta_x+a\theta_z, 2ac+a+c-1)!(b\theta_x+a\theta_y, 2ab+a+b-1)!(c\theta_y+b\theta_z, 2bc+b+c-1)!$.  Since $H_{\bd}f \in (H_{\bd}) \cap (f)$, then $\delta=x^\bd H_{\bd}f$ fixes the ideal generated by $2{\rm SP}(I)$.
\end{xmp}

\subsection{Multiplier ideals and jumping numbers} We now turn to sharper applications of Theorem~\ref{thm:polyhedra} towards multiplier ideals and their variants. In the general framework, if $X$ is a normal variety, $\Delta$ a $\QQ$-divisor so that $K_X + \Delta$ is $\QQ$-Cartier where $K_X$ is a canonical divisor, $\mathcal{I}$ an ideal sheaf on $X$ and $c \geq 0$ a real number, the multiplier ideals $J(X,\Delta,\mathcal{I}^c)$ are defined in terms of a log resolution of this data. Famously, on toric varieties, so $X = \Spec \CC[S]$, there is a canonical torus invariant choice for $K_X$. Moreover, choosing $\Delta$ torus invariant then there is a multiple $r$ and an exponent $\bu$ so that $\textrm{div }x^{\bu} = r(K_X + \Delta)$, whence to each $\Delta$ one can associate a vector $\bw = \bu/r$. The exponent associated to $\Delta = 0$ is $\bw = (1,1,\ldots,1)$. Finally, for $\mathcal{I}$ defined by a monomial ideal $I$, each $J(X,\Delta, \mathcal{I}^c)$ is a monomial ideal and we have the following explicit description of the multiplier ideals \cite{Bli04,How01}. 

\begin{thm}[Blickle-Howald] The monomial ideal $$J(X,\Delta, \mathcal{I}^c) = \langle x^\ba \in R \colon \ba + \bw \in c \cdot \int \Newt I\rangle$$ where $\int  \Newt I$ denotes the relative interior.
\end{thm}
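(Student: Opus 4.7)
The plan is to reduce the calculation to a toric log resolution of $(X,\Delta+V(\mathcal I))$ and apply the standard pushforward formula for multiplier ideals. Concretely, let $\pi\colon Y\to X$ be the toric morphism obtained by refining the defining fan of $X$ so that each primitive inner normal of a facet of $\Newt I$ appears as a ray, then further refining to a smooth fan. This $\pi$ is a log resolution of $(X,\Delta+V(\mathcal I))$, with $\pi^{-1}\mathcal I\cdot\mathcal O_Y=\mathcal O_Y(-F)$ for the torus-invariant divisor $F=\sum_\rho M_\rho D_\rho$, where $\rho$ ranges over the rays of the refined fan, $v_\rho$ denotes the primitive generator of $\rho$, and $M_\rho:=\min_{p\in\Newt I}\langle v_\rho,p\rangle$.

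Next, the numerical data of the log resolution is computed by standard toric dictionary. We have $K_Y=-\sum_\rho D_\rho$, while the hypothesis that $K_X+\Delta$ is $\QQ$-Cartier with $\operatorname{div}(x^{\bu})=r(K_X+\Delta)$ identifies $K_X+\Delta$ with the $\QQ$-Cartier divisor $-\sum_\rho\langle v_\rho,\bw\rangle D_\rho$; pulling back simply extends this linear functional to the rays of the refined fan. Putting this together, the round-up needed in the definition of the multiplier ideal is
$$\bigl\lceil K_Y-\pi^*(K_X+\Delta)-cF\bigr\rceil=\sum_\rho\bigl(\langle v_\rho,\bw\rangle-1-\lfloor cM_\rho\rfloor\bigr)D_\rho,$$
again torus invariant.

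The multiplier ideal is the pushforward of the associated invertible sheaf, so a monomial $x^\ba\in R$ lies in $J(X,\Delta,\mathcal I^c)$ if and only if $\langle v_\rho,\ba\rangle$ plus the coefficient of $D_\rho$ above is nonnegative for every ray $\rho$. Rearranging gives $\langle v_\rho,\ba+\bw\rangle\geq 1+\lfloor cM_\rho\rfloor$, and since the left-hand side is an integer this is equivalent to the strict inequality $\langle v_\rho,\ba+\bw\rangle>cM_\rho$. Finally, one checks that this system of strict inequalities over all $\rho$ is exactly the condition that $\ba+\bw$ lie in the relative interior of $c\cdot\Newt I$: the facets of $\Newt I$ correspond bijectively to rays of the refined fan (the rays of $\sigma$ giving the facets inherited from $\sigma^\vee$, and the added rays giving the remaining facets), and each defining inequality $\langle v_\rho,\cdot\rangle\geq M_\rho$ scales by $c$ to define the corresponding facet of $c\cdot\Newt I$.

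The main obstacle is the combinatorial bookkeeping around this facet structure: one must verify that the refined fan has enough rays that the pushforward conditions match precisely the facet inequalities of $c\cdot\Newt I$, and one must track how the inequalities inherited from $\sigma^\vee$ (where $M_\rho=0$) harmlessly reduce to the condition $\ba\in S$. A secondary technicality is that $\bw$ may only be rational; this is handled transparently by working at the level of $\QQ$-Cartier divisors throughout and clearing denominators via the integer $r$ as needed.
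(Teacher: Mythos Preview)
The paper does not give its own proof of this statement; it is quoted as a known result with attribution to Blickle \cite{Bli04} and Howald \cite{How01}, and is used as input for the applications in Section~5. There is therefore no ``paper's proof'' to compare against.

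Your sketch is the standard argument from those references: pass to a toric log resolution obtained by refining the fan so that the inner normals of $\Newt I$ appear as rays, compute $K_Y$, $\pi^*(K_X+\Delta)$, and the divisor $F$ with $\mathcal O_Y(-F)=\pi^{-1}\mathcal I\cdot\mathcal O_Y$ via the usual toric dictionary, and then read off the membership condition from the pushforward. This is correct in outline and is exactly how the cited papers proceed.

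Two small points deserve care. First, your claim that the facets of $\Newt I$ correspond \emph{bijectively} to the rays of the refined fan is not literally true once you further refine to a smooth fan: the smoothing step introduces additional rays whose associated inequalities are redundant (they are positive combinations of the facet inequalities). The argument still goes through, but you should say that the non-redundant conditions among $\langle v_\rho,\ba+\bw\rangle>cM_\rho$ are precisely the facet inequalities of $c\cdot\Newt I$, rather than asserting a bijection. Second, the passage from $\langle v_\rho,\ba+\bw\rangle\geq 1+\lfloor cM_\rho\rfloor$ to the strict inequality $\langle v_\rho,\ba+\bw\rangle>cM_\rho$ uses integrality of $\langle v_\rho,\ba+\bw\rangle$, which fails when $\bw$ is only rational; your final remark about clearing denominators via $r$ is the right fix, but in that case the round-up does not split as cleanly as you wrote, and one should carry the term $\langle v_\rho,\bw\rangle$ inside the floor. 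Neither issue is fatal, and both are handled in the references.
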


From here on, we'll use the notation $J(X,\bw,I^c)$ instead of $J(X,\Delta, \mathcal{I}^c)$ for this ideal or even just $J(\bw,c)$ when $X$ and $I$ are clear. As is well-known, as the value of $c$ gets larger, the multiplier ideals $J(X,\bw,I^c)$ form a decreasing family of ideals and values where $J(X,\bw,I^{c - \epsilon}) \neq J(X,\bw,I^{c})$ for all $\epsilon > 0$ are called {\bf jumping numbers} for the triple $(X,\Delta,\mathfrak{a})$. The smallest jumping number is the log canonical threshold which plays a most critical role in the study of singularities. Detecting when a value $c$ is the log canonical threshold therefore comes down to determining if the monomial ideal $J(X,\bw,I^{c})$ is proper or not. Our applications will characterize the jumping numbers $c$ of a monomial ideal $I$ in terms of the differential operators that fix $I$. The following simple example demonstrates the idea. 

\begin{xmp}
Let $R=\CC[x,y]$ and $I=(x^2,y^3)$.  Let $\bw=(1,1)$.  Define 
\[J(\bw,c)=\{x^\bv \mid \bv+\bw \in c \cdot\int{(\Newt(I))}\}.\] Solving for $t$ and $u$, we see that $t=\frac12$ and $u=\frac13$, making $c<\frac56$.

\begin{figure}[htp]
  \centering
\begin{tikzpicture}[scale=0.6]
\draw[black, thick] (0,0) -- (4,0);
\draw[black, thick] (0,0)--(0,4);
\draw[black] (2,0)--(0,3);
\draw[teal] (1.67,0) node[below]{(2c,0)} --(0,2.5) node[left]{(0,3c)};
\draw[red] (0.67,-1) node[below]{(2c-1,-1)} --(-1,1.5) node[left]{(1,3c-1)};
\draw[red] (0.67,-1) -- (4,-1);
\draw[red] (-1,1.5) -- (-1,4);
\draw[teal] (1.67,0) -- (4,0);
\draw[teal] (0,2.5)--(0,4);
\foreach \x in {0,1,2,3,4}{ \node[draw,circle,inner sep=1pt,black,fill] at (\x,0) {};}
\foreach \x in {0,1,2,3,4}{ \node[draw,circle,inner sep=1pt,black,fill] at (\x,1) {};}
\foreach \x in {0,1,2,3,4}{ \node[draw,circle,inner sep=1pt,black,fill] at (\x,2) {};}
\foreach \x in {0,1,2,3,4}{ \node[draw,circle,inner sep=1pt,black,fill] at (\x,3) {};}
\foreach \x in {0,1,2,3,4}{ \node[draw,circle,inner sep=1pt,black,fill] at (\x,4) {};}

\end{tikzpicture}
\captionsetup{margin=0in,width=2.1in,font=small,justification=centering}
\caption{Log canonical threshold}
\end{figure}
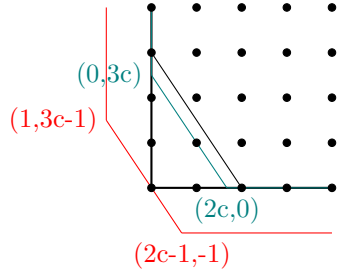 
We want to visualize this threshold $c$ through differential operators. 
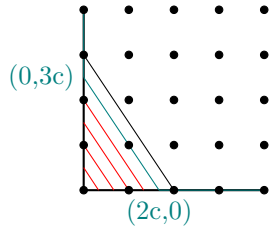
\begin{figure}[h]
  \centering
\begin{tikzpicture}[scale=0.6]
\draw[black, thick] (0,0) -- (4,0);
\draw[black, thick] (0,0)--(0,4);
\draw[black] (2,0)--(0,3);
\draw[teal] (1.67,0) node[below]{(2c,0)} --(0,2.5) node[left]{(0,3c)};
\draw[red] (1.33,0) --(0,2);
\draw[red] (1,0) --(0,1.5);
\draw[red] (.67,0) --(0,1);
\draw[red] (.33,0) --(0,.5);
%\draw[red] (0.67,-1) node[below]{(2c-1,-1)} --(-1,1.5) node[left]{(1,3c-1)};
%\draw[red] (0.67,-1) -- (4,-1);
%\draw[red] (-1,1.5) -- (-1,6);
\draw[teal] (1.67,0) -- (4,0);
\draw[teal] (0,2.5)--(0,4);
\foreach \x in {0,1,2,3,4}{ \node[draw,circle,inner sep=1pt,black,fill] at (\x,0) {};}
\foreach \x in {0,1,2,3,4}{ \node[draw,circle,inner sep=1pt,black,fill] at (\x,1) {};}
\foreach \x in {0,1,2,3,4}{ \node[draw,circle,inner sep=1pt,black,fill] at (\x,2) {};}
\foreach \x in {0,1,2,3,4}{ \node[draw,circle,inner sep=1pt,black,fill] at (\x,3) {};}
\foreach \x in {0,1,2,3,4}{ \node[draw,circle,inner sep=1pt,black,fill] at (\x,4) {};}

\end{tikzpicture}
\captionsetup{margin=0in,width=2.3in,font=small,justification=centering}
\caption{LCT/Differential operator}
\label{fig:lct}
\end{figure}

To determine which values $c$ satisfy $\b0 \in \int{(\Exp{J(\bw,c)})}$, note that \[(1,1)=t(2,0)+u(0,3) \] for $t,u \geq 0$, $t+u>c$.  See Figure \ref{fig:lct}. Let $f_c(\theta)=H_{(-1,-1)}(\theta)\prod\limits_{i=0}^{\lfloor 6c \rfloor}(3 \theta_x-2 \theta_y -i)$.  $\delta=x^{-1}y^{-1}f_c(\theta)$ fixes $J_c=R$ if and only if $(1,1) \notin V(f_c)$.  Note that $f_c(1,1)=5\ldots(5-\lfloor 6c\rfloor) \neq 0$ if $c<5/6$.
\end{xmp}

Thus a direct application of Theorem~\ref{thm:polyhedra} for the shifted Newton polygon gives the following explicit manner of computing multiplier ideals and their jumping numbers from differential operators. 
 
\begin{cor}\label{cor:lct} Let $\bw\in S$, $c\geq 0$ a real number, and $J(X,\bw,I^c) =\langle x^\ba \mid \ba + \bw \in c \cdot \inter \Newt I\rangle$. Set $(g_\tau,M_\tau)_{\tau \in \mathcal{F}}$ a face data for $\Newt I$.  
\begin{enumerate}
    \item The differential operator $\delta_c = x^{-\bw} H_{-\bw}(\theta)\prod_{\tau\in \mathcal{F}}(g_\tau(\theta), \lfloor cM_\tau\rfloor)!$ fixes $J_c$.
    \item The jumping numbers of $I$ are exactly $\left\{\min_{\tau\in\mathcal{F}}\left\{\frac{1}{M_\tau}g_\tau(\ba+\bw)\right\}\mid \ba\in S\right\}$.
    \item The log canonical threshold of $I$ is  $\min_{\tau\in\mathcal{F}}\left\{\frac{1}{M_\tau}g_\tau(\bw)\right\}$.
\end{enumerate}
\end{cor}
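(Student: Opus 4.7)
The plan is to reduce everything to a single invocation of Theorem~\ref{thm:polyhedra}(2) on a scaled and translated Newton polyhedron, and then read the jumping numbers directly from the interior membership condition.

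For part (1), I first compute a face data for the polyhedron $P := c\cdot\Newt I - \bw$. Applying Lemma~\ref{lem:facedataop} in sequence (scaling first, then translating) produces the face data $\{(g_\tau,\, cM_\tau - g_\tau(\bw))\}_{\tau\in\mathcal{F}}$. Because $\ba + \bw \in c\cdot \inter \Newt I$ if and only if $\ba \in \inter P$, the multiplier ideal $J_c$ is exactly the ideal generated by $\{x^\ba \mid \ba\in \inter P\}$. Taking $\bd = -\bw$ (which lies in $-S$ since $\bw\in S$) and applying Theorem~\ref{thm:polyhedra}(2) with $n=1$, the resulting polynomial is
\[
f \;=\; \prod_{\tau\in\mathcal{F}} \bigl(g_\tau(\theta),\, \lfloor (cM_\tau - g_\tau(\bw)) - g_\tau(-\bw) \rfloor\bigr)! \;=\; \prod_{\tau\in\mathcal{F}} \bigl(g_\tau(\theta),\, \lfloor cM_\tau \rfloor\bigr)!,
\]
where the linearity identity $g_\tau(-\bw) = -g_\tau(\bw)$ produces the cancellation. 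Taking $h := H_{-\bw}\cdot f \in (H_{-\bw})\cap (f)$ then recovers exactly the operator $\delta_c = x^{-\bw}H_{-\bw}(\theta) f(\theta)$ fixing $J_c$.

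For part (2), I unfold the defining condition: $\ba \in \Exp J_c$ holds precisely when $g_\tau(\ba + \bw) > cM_\tau$ for every facet $\tau$, equivalently when $c < c_\ba := \min_{\tau\in\mathcal{F}}\{g_\tau(\ba+\bw)/M_\tau\}$. Thus $\ba$ lies in $J_{c'}$ exactly for $c'\in[0,c_\ba)$, and drops out as $c'$ crosses $c_\ba$. A value $c$ is then a jumping number iff some $\ba\in S$ lies in $J_{c-\epsilon}$ for every $\epsilon>0$ but not in $J_c$; these two requirements together force $c = c_\ba$, and conversely each $c_\ba$ is realized as a jumping number witnessed by $\ba$ itself. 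This gives the advertised set.

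For part (3), the log canonical threshold is the smallest jumping number. Since each $g_\tau$ is nonnegative on $S$ (a fact already extracted inside the proof of Theorem~\ref{thm:polyhedra}), we have $g_\tau(\ba+\bw) \geq g_\tau(\bw)$ for all $\ba \in S$, hence $c_\ba \geq c_{\mathbf{0}}$. The minimum is therefore attained at $\ba = \mathbf{0}$, yielding the stated formula $\min_{\tau\in\mathcal{F}}\{g_\tau(\bw)/M_\tau\}$. The only real subtlety is the bookkeeping in part (1): the scaling contributes $cM_\tau$ to the face bound, the translation contributes $-g_\tau(\bw)$, and the shift by $\bd = -\bw$ in Theorem~\ref{thm:polyhedra}(2) contributes $+g_\tau(\bw)$; the two translation offsets cancel, leaving the clean expression $\lfloor cM_\tau \rfloor$ in $\delta_c$.
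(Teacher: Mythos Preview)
Your proposal is correct and follows essentially the same route as the paper: invoke Theorem~\ref{thm:polyhedra}(2) on the translated and scaled Newton polyhedron for part (1), then extract the jumping numbers from the membership inequality $g_\tau(\ba+\bw) > cM_\tau$. The paper's proof is terser---it just says part (1) ``holds directly by Theorem~\ref{thm:polyhedra}'' without spelling out the face-data bookkeeping, and for part (2) it routes the membership condition through the operator criterion $\delta_c(x^{\ba+\bw})\neq 0$ rather than reading the interior condition directly as you do; but these are equivalent by the last clause of Theorem~\ref{thm:polyhedra}. Your explicit argument for part (3), using nonnegativity of $g_\tau$ on $S$ to force the minimum at $\ba=\mathbf{0}$, fills in a step the paper leaves implicit.
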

\begin{proof}
The first claim holds directly by Theorem~\ref{thm:polyhedra}. A monomial $x^\ba$ is in $J_c$ if and only if $\delta_c(x^{\ba+\bw})\neq 0$, which occurs if and only if $f_c(\ba+\bw)\neq 0$.  This happens exactly when for all $\tau\in\mathcal{F}$, $g_\tau(\ba+\bw)> cM_\tau$, i.e.\ when $c < \frac{1}{M_\tau}g_\tau(\ba+\bw)$.  In particular, this means that $\min\left\{\frac{1}{M_\tau}g_\tau(\ba+\bw)\mid \tau\in\mathcal{F}\right\}$ is a jumping number for $I$.
\end{proof}

\begin{xmp}
Let $R=\CC[x,xy,xy^2]$ and $I=(x^4,x^2y,x^3y^6)$.  For $\bw \in S$, set $$J_c=\{x^{\bv} \in R \mid \bv+\bw \in c\int(\Newt I)\}.$$  Let us assume that $\bw=(1,1)$.    The linear forms associated to the two faces $\tau_1$ and $\tau_2$ that bound the convex hull of $I$ are $g_{\tau_1}:=x+2y \text{ and } g_{\tau_2}:=5x-y$ and it is easy to see that $M_{\tau_1}=4$ and $M_{\tau_2}=9$.  We define \[\delta_c=x^{-1}y^{-1}H_{(-1,-1)}(\theta)(g_{\tau_1}(\theta),\lfloor 4c \rfloor)!(g_{\tau_2}(\theta),\lfloor 9c \rfloor)!.\]
By Corollary~\ref{cor:lct}(2), the jumping numbers of $I$ with respect to the divisor $0$ are \[\left\{\text{min}\left\{\frac{g_{\tau_1}(\ba+(1,1))}{4},\frac{g_{\tau_2}(\ba+(1,1))}{9}\right\}\mid \ba \in S\right\}.\]

The table in Figure \ref{fig:jnosrnc2} illustrates the jumping numbers in ascending order in blue. Note that the jumping numbers can be determined by $\tau_1$ or $\tau_2$.  The illustration in Figure \ref{fig:jnosrnc2} illustrates the boundaries of $c\int{(\Newt{I})}$ for the various jumping numbers in the table.

\begin{figure}[h]
  \centering
\begin{tikzpicture}
\tikzstyle{every column}=[draw]
\matrix [draw=black,column sep=.1cm]
{
\node {$\ba$}; & \node{$\displaystyle\frac{g_{\tau_1}(\ba+(1,1))}{4}$}; & \node {$\displaystyle\frac{g_{\tau_2}(\ba+(1,1))}{9}$}; \\
\node {(0,0)}; & \node{3/4}; & \node {\textcolor{blue}{4/9}}; \\
\node {(1,2)}; & \node{2}; & \node {\textcolor{blue}{7/9}}; \\
\node {(1,1)}; & \node{3/2}; & \node {\textcolor{blue}{8/9}}; \\
\node {(1,0)}; & \node{\textcolor{blue}{1}}; & \node {\textcolor{blue}{1}}; \\
\node {(2,4)}; & \node{13/4}; & \node {\textcolor{blue}{10/9}}; \\
\node {(2,3)}; & \node{11/4}; & \node {\textcolor{blue}{11/9}}; \\
\node {(2,0)}; & \node{\textcolor{blue}{5/4}}; & \node {14/9}; \\
\node {(2,2)}; & \node{9/4}; & \node {\textcolor{blue}{12/9}}; \\
\node {(3,6)}; & \node{15/4}; & \node {\textcolor{blue}{13/9}}; \\
\node {(3,5)}; & \node{13/4}; & \node {\textcolor{blue}{14/9}}; \\
};
\end{tikzpicture}
\hspace{1cm}
\begin{tikzpicture}[scale=0.7]
\filldraw[fill=gray!30, 
draw=white] (4,0) -- (6,0) -- (6,9) -- (4.5,9) -- (3,6)--(2,1)--(4,0) -- cycle;
\draw[gray] (0,0) -- (0,9);
\draw[black, thick] (0,0) -- (6,0);
\draw[black, thick] (0,0)--(4.5,9);
\draw[teal] (1.33,2.66) -- (.88,.44);
\draw[teal] (.88,.44)-- (1.76,0);
\draw[violet] (2.33,4.66) -- (1.55,.77);
\draw[violet](1.55,.77)-- (3.11,0);
\draw[red] (2.66,5.33) -- (1.77,.88);
\draw[red](1.77,.88)-- (3.55,0);
\draw[magenta] (3,6)--(2,1);
\draw[magenta](2,1)--(4,0);
\draw[orange] (3.33,6.66)--(2.22,1.11);
\draw[orange](2.22,1.11)--(4.44,0);
\draw[green] (3.66,7.33)--(2.44,1.22);
\draw[green](2.44,1.22)--(4.88,0);
\draw[blue!80] (3.75,7.50)--(2.5,1.25);
\draw[blue!80](2.5,1.25)--(5,0);
\draw[blue!50] (4,8)--(2.66,1.33);
\draw[blue!50](2.66,1.33)--(5.33,0);
\draw[gray] (4.33,8.66)--(2.88,1.44);
\draw[gray](2.88,1.44)--(5.77,0);
%\draw[gray] (4.25,0) -- (2.83,5.67);
%\draw[blue!30] (4,0) -- (2.67,5.33);
%\draw[blue!50] (3.75,0) -- (2.5,5);
%\draw[blue!80] (3.5,0) -- (2.33,4.67);
%\draw[green] (3.25,0) -- (2.17,4.33);
%\draw[orange] (3,0)--(2,4);
%\draw[orange] (2.75,0) --(1.83,3.67);
%\draw[magenta] (2.5,0) --(1.67,3.33);
%\draw[red] (2.25,0) --(1.5,3);
%\draw[violet] (2,0) --(1.33,2.67);
%\draw[violet] (1.75,0) --(1.17,2.33);
%\draw[violet] (1.5,0) -- (1,2);
%\draw[violet] (1.25,0)--(.83,1.67);
%\draw[teal] (1,0)--(.67,1.33);
%\draw[teal] (.75,0)--(.5,1);
%\draw[teal] (.5,0)--(.33,.67);
%\draw[teal] (.25,0)--(.17,.33);
\foreach \x in {0,1,2,3,4,5,6}{ \node[draw,circle,inner sep=1pt,black,fill] at (\x,0) {};}
\foreach \x in {1,2,3,4,5,6}{ \node[draw,circle,inner sep=1pt,black,fill] at (\x,1) {};}
\foreach \x in {1,2,3,4,5,6}{ \node[draw,circle,inner sep=1pt,black,fill] at (\x,2) {};}
\foreach \x in {2,3,4,5,6}{ \node[draw,circle,inner sep=1pt,black,fill] at (\x,3) {};}
\foreach \x in {2,3,4,5,6}{ \node[draw,circle,inner sep=1pt,black,fill] at (\x,4) {};}
\foreach \x in {3,4,5,6}{ \node[draw,circle,inner sep=1pt,black,fill] at (\x,5) {};}
\foreach \x in {3,4,5,6}{ \node[draw,circle,inner sep=1pt,black,fill] at (\x,6) {};}
\foreach \x in {4,5,6}{ \node[draw,circle,inner sep=1pt,black,fill] at (\x,7) {};}
\foreach \x in {4,5,6}{ \node[draw,circle,inner sep=1pt,black,fill] at (\x,8) {};}
\foreach \x in {5,6}{ \node[draw,circle,inner sep=1pt,black,fill] at (\x,9) {};}
\end{tikzpicture}
\captionsetup{margin=0in,width=2.1in,font=small,justification=centering}
\caption{Jumping numbers}
\label{fig:jnosrnc2}
\end{figure}
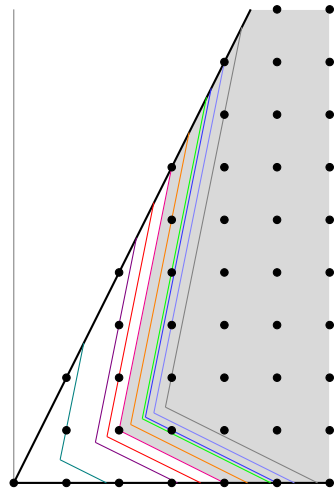
Looking at Figure \ref{fig:jnosrnc2}, we see that the ideals $J_c$ for the jumping numbers given are:
\[
J_c=\begin{cases}
R &\text{ if } c < 4/9\\
\fm &\text{ if } 4/9 \leq  c < 7/9\\
(x,xy,x^2y^4) &\text{ if } 7/9 \leq  c < 8/9\\
(x,x^2y^3,x^2y^4) &\text{ if } 8/9 \leq c < 1\\
\fm^2 &\text{ if } 1 \leq c < 10/9\\
(x^2,x^2y,x^2y^2,x^2y^3,x^3y^6) &\text{ if } 10/9 \leq c <11/9\\
(x^2,x^2y,x^2y^2,x^3y^5,x^3y^6) &\text{ if } 11/9 \leq c < 5/4\\
(x^3,x^2y,x^2y^2,x^3y^5,x^3y^6) &\text{ if } 5/4 \leq c<12/9\\
(x^3,x^2y,x^3y^4,x^3y^5,x^3y^6) &\text{ if } 12/9 \leq c< 13/9\\
(x^3,x^3y,x^3y^2,x^3y^3,x^3y^4,x^3y^5,x^4y^8) &\text{ if } 13/9 \leq c< 14/9\\
\end{cases}
\]

\end{xmp}

Often in the study of multiplier ideals, natural useful generalizations emerge and our methods apply in much more generally to these settings. Indeed, many variants of multiplier ideals, such a mixed multiplier ideals \cite[Gen. 9.2.8]{Laz04}, Fujino's non-LC ideals, and many of the intermediate adjoint ideals, as well as mixed versions, introduced in \cite[Sec. 4,5]{HSZ14} all enjoy variants of the Blickle-Howald theorem. As such, we articulate this more general class as follows.

\begin{dff}
Let $I_1,\ldots, I_n$ be monomial ideals, $\bc\in \RR_{\geq 0}^n$, $\bw\in\ZZ^d$. A {\it Blickle-Howald ideal} is an ideal of the form 
\begin{eqnarray*}J_{I_1,\ldots,I_n}(\bw,\bc) & := & \left\langle x^\ba \mid \ba + \bw 
\in \sum_{j=1}^n  c_j\Newt I_j \right\rangle \textrm{ or }\\
J^{\circ}_{I_1,\ldots,I_n}(\bw,\bc) & := & \left\langle x^\ba \mid \ba + \bw 
\in \sum_{j={1}}^n c_j \inter\Newt I_j\right\rangle.
\end{eqnarray*}
\end{dff} 

\begin{xmp} A few natural examples are the mixed multiplier ideals already mentioned $J(X,\Delta,I_1^{a_1}\cdots I_n^{a_n})$ which by definition is the ideal $J_{I_1,\ldots,I_n}^\circ(\bw, \ba)$ where $\bw$ corresponds to $\Delta$ and $\ba = (a_1,\ldots,a_n)$. 
\end{xmp}

\begin{xmp} For fixed ideals $I_1$ and $I_2$, the ideal $J_{I_1,I_2}(\bw,(c_1,c_2)) = \left\langle x^\ba \mid \ba + \bw 
\in c_1\Newt I_1 + c_2 \Newt I_2 \right\rangle$ is utilized in \cite[Prop. 5.3]{HSZ14}.
\end{xmp}

We refer to BH ideals for Blickle-Howald ideals and when the ideals $I_1,\ldots,I_n$ and the vector $\bw$ they can be suppressed in the notation. As the input data for BH-ideals is more complicated than that for multiplier ideals, instead of having simple jumping numbers, BH ideals enjoy various constancy regions similar to the mixed multiplier ideals, see \cite{LM11} and \cite{ACAMDC18,ACAMDCGA20} for more in that latter case. We summarize the relevant terminology here however the basic results on BH-ideals follow immediately from the methods of \cite{ACAMDC18}. 

\begin{rmk} It is immediate to verify the following from the definition. Set $I_1,\ldots, I_n$ be monomial ideals, $\bc\in \RR_{\geq 0}^n$, $\bw\in\ZZ^d$. For any $\bc' \in \bc + \RR_{\geq 0}^n$, $J^\circ(\bw,\bc) \supseteq J^{\circ}(\bw,\bc')$ and $J^\circ(\bw,\bc) = J^\circ(\bw,\bc')$ for $\bc' \in (\bc + \RR_{\geq 0}^n) \cap B_{\epsilon}(\bc)$ for $0 < \epsilon \ll 1$.
\end{rmk}

Fix $I_1,\ldots,I_n$ and $\bw$. Again, adapting terminology from the mixed multiplier ideal setting, we say for $\bc$ the locus $C_\bc := \{ \bc' \in \RR_{\geq 0}^n \colon J^\circ(\bw,\bc') = J^\circ(\bw,\bc)\}$ is its {\it consistency region}. A {\it jumping point} is a value $\bc$ so that $J^\circ(\bw,\bc') \supsetneq J^\circ(\bw,\bc)$ for all $\bc' \in \{ \bc - \RR_{\geq 0}^n\} \cap B_\epsilon(\bc)$ and $0 < \epsilon \ll 1$. A {\it jumping wall} is the boundary of $C_\bc$. 

\

We are interested in the jumping wall of consistency region where $J^\circ(\bw,\bc) = R$ which in the mixed multiplier ideal is the {\it log-canonical wall}. Clearly, the methods combining Theorem~\ref{thm:polyhedra} and Lemma~\ref{lem:facedataop} combine to extend Corollary~\ref{cor:lct} to many more BH-ideals. We characterize this region in terms of differential operators in the following special case which is an immediate consequence.

\begin{xmp}
Let $R=\CC[x,xy,xy^2]$, $I_1=(x^8y,x^3y^3)$, and $I_2=(x^7y^2,x^4y^6)$.  We have that $\Newt I_1$ is defined by the face data $\{(y,1),(2x+5y,21),(2x-y,3)\}$, and $\Newt I_2$ is defined by the face data $\{(y,2),(4x+3y,34),(2x-y,2)\}$.  For any $\bc=(c_1,c_2)\in\RR_{>0}^2$, we have that $c_1\Newt I_1$ has face data $\{(y,c_1),(2x+5y,21c_1),(2x-y,3c_1)\}$, and $\Newt I_2$ is defined by the face data $\{(y,2c_2),(4x+3y,34c_2),(2x-y,2c_2)\}$. In the first graph in Figure \ref{fig:NewtMin}, we illustrate the Newton Polyhedra of $I_1$ and $I_2$ in gray and blue respectively.  
\begin{figure}[h]
  \centering
 \begin{tikzpicture}[scale=0.3]
\filldraw[fill=gray!30,draw=gray!30] (8,1) -- (10,1) -- (10,14) --(8.5,14) -- (3,3) --(8,1) -- cycle; 
\filldraw[fill=blue!10,draw=blue!20] (7,2) -- (10,2) -- (10,14) --(8,14) -- (4,6) --(7,2) -- cycle; 
\draw[black, thick] (0,0) -- (10.1 ,0);
%\draw[gray] (0,0) -- (0,14);
\draw[black, thick] (0,0) -- (7,14);
\draw[gray] (8,1) -- (10,1);
\draw[gray] (3,3) -- (8.5,14);
\draw[gray] (8,1) -- (9,3);
\draw[gray] (3,3) -- (9,3);
\draw[blue!60] (7,2) -- (10,2);
\draw[blue!60] (4,6) -- (8,14);
\draw[blue!60] (7,2) -- (9,6);
\draw[blue!60](4,6) -- (9,6);
\foreach \x in {1,2,3,4}{ \node[draw,circle,inner sep=1pt,gray,fill] at (\x,2*\x) {};}
\foreach \x in {1,2,3,4,5}{ \node[draw,circle,inner sep=1pt,gray,fill] at (\x,2*\x-1) {};}
\foreach \x in {2,3,4,5}{ \node[draw,circle,inner sep=1pt,gray,fill] at (\x,2*\x-2) {};}
\foreach \x in {0,1,...,10}{ \node[draw,circle,inner sep=1pt,gray,fill] at (\x,0) {};}
\foreach \x in {1,2,3,4}{ \node[draw,circle,inner sep=1pt,gray,fill] at (\x,1) {};}
\foreach \x in {5,6,...,10}{ \node[draw,circle,inner sep=1pt,gray,fill] at (\x,1) {};}
\foreach \x in {2,3,4,5}{ \node[draw,circle,inner sep=1pt,gray,fill] at (\x,2) {};}
\foreach \x in {6,7,...,10}{ \node[draw,circle,inner sep=1pt,gray,fill] at (\x,2) {};}
\foreach \x in {2,3,4,5}{ \node[draw,circle,inner sep=1pt,gray,fill] at (\x,3) {};}
\foreach \x in {6,7,...,10}{ \node[draw,circle,inner sep=1pt,gray,fill] at (\x,3) {};}
\foreach \x in {2,3,...,6}{ \node[draw,circle,inner sep=1pt,gray,fill] at (\x,4) {};}
\foreach \x in {7,8,...,10}{ \node[draw,circle,inner sep=1pt,gray,fill] at (\x,4) {};}
\foreach \x in {3,4,...,10}{ \node[draw,circle,inner sep=1pt,gray,fill] at (\x,5) {};}
\foreach \x in {3,4,...,10}{ \node[draw,circle,inner sep=1pt,gray,fill] at (\x,6) {};}
\foreach \x in {4,5,...,10}{ \node[draw,circle,inner sep=1pt,gray,fill] at (\x,7) {};}
\foreach \x in {4,5,...,10}{ \node[draw,circle,inner sep=1pt,gray,fill] at (\x,8) {};}
\foreach \x in {5,...,10}{ \node[draw,circle,inner sep=1pt,gray,fill] at (\x,9) {};}
\foreach \x in {5,6,...,10}{ \node[draw,circle,inner sep=1pt,gray,fill] at (\x,10) {};}
\foreach \x in {6,...,10}{ \node[draw,circle,inner sep=1pt,gray,fill] at (\x,11) {};}
\foreach \x in {6,...,10}{ \node[draw,circle,inner sep=1pt,gray,fill] at (\x,12) {};}
\foreach \x in {7,...,10}{ \node[draw,circle,inner sep=1pt,gray,fill] at (\x,13) {};}
\foreach \x in {7,...,10}{ \node[draw,circle,inner sep=1pt,gray,fill] at (\x,14) {};}
\end{tikzpicture}
\hspace{1cm}
\begin{tikzpicture}[scale=0.3]
\filldraw[fill=violet!20,draw=violet] (10,14) -- (10,.315) -- (2.565,.315) --(1.063, .916) -- (.975,1.033) --(7.458,14) -- (10,14) -- cycle; 
\filldraw[fill=gray!30,draw=gray] (10,14) -- (10,.6) -- (3,.6) --(2, 1) -- (1.4,1.8) --(7.5,14) -- (10,14) -- cycle; 
%\filldraw[fill=gray!30,draw=gray] (10,14) -- (10,1) -- (5,1) --(3.33, 1.67) -- (2.33,3) --(7.88,14) -- (10,14) -- cycle; 
\filldraw[fill=blue!10,draw=blue!40] (10,14) -- (10,.83) -- (5.17,.83) --(2.67,1.83) -- (2.17,2.5) --(7.92,14) -- (10,14) -- cycle; 
\filldraw[fill=teal!10,draw=teal!40] (10,14) -- (10,1.8) -- (7.2,1.8) --(6.2,2.2) -- (3.8,5.4) --(8.1,14) -- (10,14) -- cycle; 
\draw[black, thick] (0,0) -- (10.1 ,0);
%\draw[gray] (0,0) -- (0,14);
\draw[black, thick] (0,0) -- (7,14);
\draw[gray] (8,1) -- (10,1);
\draw[gray] (3,3) -- (8.5,14);
\draw[gray] (8,1) -- (9,3);
\draw[gray] (3,3) -- (9,3);
\draw[blue!60] (7,2) -- (10,2);
\draw[blue!60] (4,6) -- (8,14);
\draw[blue!60] (7,2) -- (9,6);
\draw[blue!60](4,6) -- (9,6);
\foreach \x in {1,2,3,4}{ \node[draw,circle,inner sep=1pt,gray,fill] at (\x,2*\x) {};}
\foreach \x in {1,2,3,4,5}{ \node[draw,circle,inner sep=1pt,gray,fill] at (\x,2*\x-1) {};}
\foreach \x in {2,3,4,5}{ \node[draw,circle,inner sep=1pt,gray,fill] at (\x,2*\x-2) {};}
\foreach \x in {0,1,...,10}{ \node[draw,circle,inner sep=1pt,gray,fill] at (\x,0) {};}
\foreach \x in {1,2,3,4}{ \node[draw,circle,inner sep=1pt,gray,fill] at (\x,1) {};}
\foreach \x in {5,6,...,10}{ \node[draw,circle,inner sep=1pt,gray,fill] at (\x,1) {};}
\foreach \x in {2,3,4,5}{ \node[draw,circle,inner sep=1pt,gray,fill] at (\x,2) {};}
\foreach \x in {6,7,...,10}{ \node[draw,circle,inner sep=1pt,gray,fill] at (\x,2) {};}
\foreach \x in {2,3,4,5}{ \node[draw,circle,inner sep=1pt,gray,fill] at (\x,3) {};}
\foreach \x in {6,7,...,10}{ \node[draw,circle,inner sep=1pt,gray,fill] at (\x,3) {};}
\foreach \x in {2,3,...,6}{ \node[draw,circle,inner sep=1pt,gray,fill] at (\x,4) {};}
\foreach \x in {7,8,...,10}{ \node[draw,circle,inner sep=1pt,gray,fill] at (\x,4) {};}
\foreach \x in {3,4,...,10}{ \node[draw,circle,inner sep=1pt,gray,fill] at (\x,5) {};}
\foreach \x in {3,4,...,10}{ \node[draw,circle,inner sep=1pt,gray,fill] at (\x,6) {};}
\foreach \x in {4,5,...,10}{ \node[draw,circle,inner sep=1pt,gray,fill] at (\x,7) {};}
\foreach \x in {4,5,...,10}{ \node[draw,circle,inner sep=1pt,gray,fill] at (\x,8) {};}
\foreach \x in {5,...,10}{ \node[draw,circle,inner sep=1pt,gray,fill] at (\x,9) {};}
\foreach \x in {5,6,...,10}{ \node[draw,circle,inner sep=1pt,gray,fill] at (\x,10) {};}
\foreach \x in {6,...,10}{ \node[draw,circle,inner sep=1pt,gray,fill] at (\x,11) {};}
\foreach \x in {6,...,10}{ \node[draw,circle,inner sep=1pt,gray,fill] at (\x,12) {};}
\foreach \x in {7,...,10}{ \node[draw,circle,inner sep=1pt,gray,fill] at (\x,13) {};}
\foreach \x in {7,...,10}{ \node[draw,circle,inner sep=1pt,gray,fill] at (\x,14) {};}
\end{tikzpicture}
\captionsetup{margin=0in,width=4.5in,font=small,justification=centering}
\caption{$\Newt{(I_1)}$ with $\Newt{(I_2)}$ and $c_1\Newt{(I_1)}+c_2\Newt{(I_2)}$ for $(c_1,c_2) \in \{(\frac27 ,\frac1{34}), (\frac15,\frac15), (\frac12, \frac16), (\frac15,\frac45)\}$.}
\label{fig:NewtMin}
\end{figure}

Now, by part (4) of Lemma~\ref{lem:facedataop}, we can compute face data of $Q_\bc := c_1\Newt I_1+c_2\Newt I_2$.
$$\begin{array}{ccc}
\text{Face of }I_1 & \text{Corr.\ vertex of }c_2\Newt I_2 & \text{Face datum of }Q_\bc\\
(y, c_1) & (7c_2,2c_2) & (y, c_1 + 2c_2)\\
(2x+5y,21c_1) & (7c_2,2c_2) & (2x+5y, 21c_1 + 24c_2)\\
(2x-y,3c_1) & (4c_2,6c_2) & (2x-y,c_1+2c_2)\\
\\
\text{Face of }I_2 & \text{Corr.\ vertex of }c_1\Newt I_1 & \text{Face datum of }Q_\bc\\
(y,2c_2) & (8c_1,c_1) & (y, c_1+2c_2)\\
(4x+3y,34c_2) & (3c_1,3c_1) & (4x+3y, 21c_1+34c_2)\\
(2x-y,2c_2) & (3c_1,3c_1) & (2x-y, 3c_1+2c_2)\\
\end{array}$$
Removing the redundant conditions, we have that $Q_\bc$ is defined by face data $$\{(y,c_1+2c_2),(2x+5y,21c_1+24c_2),(4x+3y,21c_1+34c_2),(2x-y,3c_1+2c_2)\}.$$  
In second graph in Figure \ref{fig:NewtMin}, we illustrate the Minkowski sum $c_1\Newt{(I_1)}+c_2\Newt{(I_2)}$ for three different pairs $(c_1,c_2)$: $(\frac27, \frac1{34})$ in violet,  $(\frac15, \frac15)$ in gray,  $(\frac12, \frac16)$ in blue, and  $(\frac15, \frac45)$ in green.
\

Set $\bw = (1,1)$. We consider examples BH-ideals which arise as a mixed multiplier ideal and Fujino's non-LC ideal respectively. In particular, set $J:=J_{I_1,I_2}(\bw,\bc)=\left\langle x^\ba \mid \ba\in P_\bc\right\rangle$, where $P_\bc = Q_\bc-(1,1)$.  Now $P_\bc$ is defined by face data $$\{(y,c_1+2c_2-1),(2x+5y,21c_1+24c_2-7),(4x+3y,21c_1+34c_2-7),(2x-y,3c_1+2c_2-1)\}.$$ Let $\bd = (-2,-1).$
By Theorem~\ref{thm:polyhedra}, $J$ is fixed by the differential operator
$\delta_\bc = x^{-2}y^{-1} H_\bd(\theta)f_\bc(\theta)$, where 
$$f_\bc = (y,\lceil c_1+2c_2-1\rceil)!\cdot(2x+5y, \lceil 21c_1+24c_2+1\rceil)!\cdot (4x+3y,\lceil 21c_1+34c_2+3\rceil)!\cdot(2x-y,\lceil 3c_1+2c_2+1\rceil)!.$$
Also $J^\circ:=J^\circ(\bw,\bc)=\left\langle x^\ba\mid \ba\in\inter P_\bc\right\rangle$ is fixed by $\delta^\circ_\bc = x^{-2}y^{-1} H_\bd(\theta)g_\bc(\theta)$, where
$$g_\bc = (y,\lfloor c_1+2c_2\rfloor)!\cdot(2x+5y, \lfloor 21c_1+24c_2+2\rfloor)!\cdot (4x+3y,\lfloor 21c_1+34c_2+4\rfloor)!\cdot(2x-y,\lfloor 3c_1+2c_2+2\rfloor)!$$
Now $J^\circ$ is the unit ideal if $1$ is in the image of $\delta^\circ_\bc$, which occurs exactly when $\delta^\circ_\bc(x^2y)\neq 0$.  This happens when 
$$0\neq g(2,1) =(1,\lfloor c_1+2c_2\rfloor)!\cdot(9, \lfloor 21c_1+24c_2+2\rfloor)!\cdot (11,\lfloor 21c_1+34c_2+4\rfloor)!\cdot(3,\lfloor 3c_1+2c_2+2\rfloor)!,$$
which is equivalent to the system of inequalities
\begin{align*}
    c_1+2c_2 & < 1\\
    21c_1+24c_2 & < 7\\
    21c_1+34c_2 & < 7\\
    3c_1+2c_2 & < 1
\end{align*}
The third inequality above implies the other three, so $J^\circ$ is the unit ideal for all $\bc=(c_1,c_2)$ with $21c_1+34c_2<7$.  A similar computation shows that $J$ is the unit ideal for all $\bc=(c_1,c_2)$ with $21c_1+34c_2\leq 7$.

\

More generally, for any monomial $x^a y^b \in J^\circ(\bw,\bc)$ if and only if 
$$0\neq g(a+2,b+1) = (b+1,\lfloor c_1+2c_2\rfloor)!\cdot(2a+5b+9, \lfloor 21c_1+24c_2+2\rfloor)!\cdot (4a+3b+11,\lfloor 21c_1+34c_2+4\rfloor)!\cdot(2a-b+3,\lfloor 3c_1+2c_2+2\rfloor)!,
$$
equivalently,
\begin{align*}
c_1+2c_2 & < b+1\\
21c_1 + 24c_2 & < 2a+5b+7\\
    21c_1+34c_2 &< 4a+3b+7\\
    3c_1+2c_2 &<2a-b+1.
\end{align*}

Solving the above system of inequalities, we can determine regions in $\RR_{\geq 0}^2$ where the various monomials of $R$, must be in  $J^o(\bw,\bc)$.   Looking at the intersections of these regions, we can determine the consistency regions for $J^o(\bw,\bc)$ and we plot some of them in Figure \ref{fig:LCWalt},
for \[\{(c_1,c_2) \mid c_1 \geq 0, c_2 \geq 0, c_1 \leq \frac23, c_2 \leq \frac23\}.\]

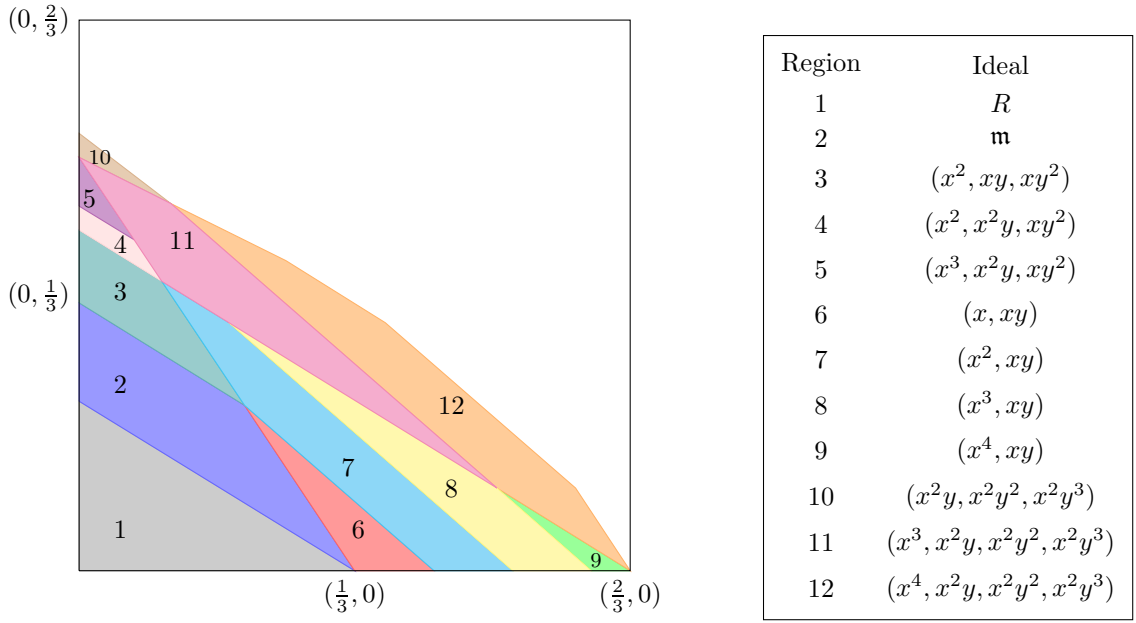
\begin{figure}[h]
    \centering
    \begin{tikzpicture}[scale=0.55]
 \filldraw[gray!40,draw=gray!60] (0,0)  --(0,4.1)  -- (6.66,0) --(0,0);
 \draw (1,1) node {1};
\filldraw[blue!40,draw=blue!60] (0,4.1)  -- (0,6.48) --(4,4) -- (6.66,0) --(0,4.1);
 \draw (1,4.5) node {2};
 \filldraw[teal!40,draw=teal!60] (0,6.48)  -- (0,8.24) -- (2,7) --(4,4) --(0,6.48);
 \draw (1,6.75) node {3};
 \filldraw[pink!40,draw=pink!60] (0,8.24) -- (0,8.82) -- (1.34, 8)-- (2,7)  -- (0,8.24);
 \draw (1,7.9) node {4};
 \filldraw[violet!40,draw=violet!60] (0,8.82) -- (0,10) --(1.34,8) --(0,8.82);
  \draw (.25,9) node {5};
 \filldraw[red!40,draw=red!60] (6.66,0) -- (8.58, 0) -- (4,4) -- (6.66,0);
  \draw (6.75,1) node {6}; 
 \filldraw[green!40,draw=green!60]  (13.32,0) -- (10.1,2) -- (12.38,0) -- (13.32,0);
 \draw (12.5,.25) node  {\footnotesize 9}; 
\filldraw[orange!40,draw=orange!60] (2.22,8.88) -- (5,7.5) -- (7.4,6)-- (12,2) -- (13.32,0) -- (10.1,2) --  (2.22,8.88);
\draw (9,4) node{12}; 
\filldraw[brown!40,draw=brown!60] (2.22,8.88) -- (0,10) --(0,10.58) -- (2.22,8.88);
\draw (.5,10) node {\footnotesize 10}; 
\filldraw[cyan!40,draw=cyan!60] (8.58, 0) -- (10.48,0) -- (3.62, 6) -- (2,7) -- (4,4) -- (8.58, 0);
\draw (6.5,2.5) node {7}; 
\filldraw[yellow!40,draw=yellow!60]  (3.62,6)  -- (10.48,0) -- (12.38,0) -- (10.1,2)  -- (3.62,6);
\draw (9,2) node {8}; 
\filldraw[magenta!40,draw=magenta!60]  (10.1,2)  -- (2,7) -- (0,10) --  (2.22,8.88)  -- (10.1,2);
\draw (2.5,8) node {11}; 
\draw[black] (0,0) -- (13.32 ,0) -- (13.32,13.32) --(0,13.32) -- (0,0);
\draw (6.66,0) node [below] {$(\frac13,0)$};
\draw (0,6.66) node [left] {$(0,\frac13)$};
\draw (13.32,0) node [below] {$(\frac23,0)$};
\draw (0,13.32) node [left] {$(0,\frac23)$};
%\draw (6.66,0) -- (0,11.6);
%\draw (0,8.81) -- (1.34, 8) -- (10.48,0);
%\draw (0,6.48) -- (4,4) -- (8.58,0);
%\draw (0,10) -- (2.22, 8.88) -- (12.38,0);
 \end{tikzpicture}
\hspace{1cm}
\begin{tikzpicture}
    \tikzstyle{every column}=[draw]
\matrix [draw=black,column sep=.1cm]
{\node {Region}; & \node{Ideal};  \\
\node {1}; & \node{$R$};  \\
\node {2}; & \node{$\fm$};  \\
\node {3}; & \node{$(x^2,xy,xy^2)$};  \\
\node {4}; & \node{$(x^2,x^2y,xy^2$)};  \\
\node {5}; & \node{$(x^3,x^2y,xy^2)$};  \\
\node {6}; & \node{$(x,xy)$};  \\
\node {7}; & \node{$(x^2,xy)$};  \\
\node {8}; & \node{$(x^3,xy)$}; \\
\node {9}; & \node{$(x^4,xy)$}; \\
\node {10}; & \node{$(x^2y,x^2y^2,x^2y^3)$}; \\
\node {11}; & \node{$(x^3,x^2y,x^2y^2,x^2y^3)$}; \\
\node {12}; & \node{$(x^4,x^2y,x^2y^2,x^2y^3)$}; \\
};
\end{tikzpicture}
\captionsetup{margin=0in,width=4.5in,font=small,justification=centering}
    \caption{Some consistency regions for $J^o(\bw,\bc)$ for $(c_1,c_2) \in [0,\frac23] \times [0,\frac23]$}
   \label{fig:LCWalt}
\end{figure}

\end{xmp}

\end{document}